\numberwithin{equation}{section}
\definecolor{myred}{rgb}{0.75,0,0}
\definecolor{mygreen}{rgb}{0,0.5,0}
\definecolor{myblue}{rgb}{0,0,0.65}
  \def\bg{{\mathfrak b}}  \def\BM{{\mathbb{B}}}
    \def\FM{{\mathbb{F}}}
  \def\gg{{\mathfrak g}}  \def\GM{{\mathbb{G}}}
  \def\ng{{\mathfrak n}}  
  \def\pg{{\mathfrak p}}
    \def\SM{{\mathbb{S}}}
  \def\tg{{\mathfrak t}}  \def\TM{{\mathbb{T}}}
    \def\ZM{{\mathbb{Z}}}
\def\BB{{\mathbf B}}
    \def\EC{{\mathcal{E}}}
    \def\FC{{\mathcal{F}}}
\def\GB{{\mathbf G}}    \def\GC{{\mathcal{G}}}
\def\HB{{\mathbf H}}    
    \def\IC{{\mathcal{I}}}
    \def\MC{{\mathcal{M}}}
    \def\NC{{\mathcal{N}}}
    \def\OC{{\mathcal{O}}}
\def\PB{{\mathbf P}}    \def\PC{{\mathcal{P}}}
\def\TB{{\mathbf T}}    \def\TC{{\mathcal{T}}}
\def\UB{{\mathbf U}}
\def\XB{{\mathbf X}}
\def\ES{{\EuScript E}}
\def\IS{{\EuScript I}}
\def\JS{{\EuScript J}}
\def\O{\Omega}
\newcommand{\nc}{\newcommand} \newcommand{\renc}{\renewcommand}
\newcommand{\rdots}{\mathinner{ \mkern1mu\raise1pt\hbox{.}
    \mkern2mu\raise4pt\hbox{.}
    \mkern2mu\raise7pt\vbox{\kern7pt\hbox{.}}\mkern1mu}}
\DeclareMathOperator{\Coh}{Coh}
\DeclareMathOperator{\QCoh}{QCoh}
\def\to{\rightarrow}
\def\longto{\longrightarrow}
\nc{\triright}{\stackrel{[1]}{\to}}
\nc{\longtriright}{\stackrel{[1]}{\longto}}
\nc{\Br}{\mathcal{B}}
\nc{\HotRR}{{}_R\mathcal{K}_R}
\nc{\HotR}{\mathcal{K}_R}
\nc{\excise}[1]{}
\nc{\defect}{\text{df}}
\nc{\h}[1]{\underline{H}_{#1}}
\nc{\Ga}{\mathbb{G}_a} 
\nc{\Gm}{\mathbb{G}_{\mathrm{m}}} 
\nc{\Perv}{{\mathbf{P}}}
\nc{\IH}{{\mathrm{IH}}}
\nc{\ic}{\mathbf{IC}}
\nc{\gl}{{\mathfrak{gl}}}
\renc{\sl}{{\mathfrak{sl}}}
\renc{\sp}{{\mathfrak{sp}}}
\nc{\HBM}{H^{BM}}
\DeclareMathOperator{\For}{For} 
 \DeclareMathOperator{\Hom}{Hom}
\DeclareMathOperator{\Rep}{Rep}
\DeclareMathOperator{\id}{Id}
\newtheorem{thm}{Theorem}[section]
\newtheorem{lem}[thm]{Lemma}
\newtheorem{prop}[thm]{Proposition}
\newtheorem{cor}[thm]{Corollary}
\theoremstyle{definition}
\theoremstyle{remark}
\newtheorem{remark}[thm]{Remark}
\DeclareMathOperator{\Ext}{Ext}
\DeclareMathOperator{\Tilt}{Tilt}
\def\tNC{\widetilde{\NC}}
\def\Flag{\mathscr{B}}
\newcommand{\co}{\mathsf{conv}}
\newcommand{\coo}{\mathsf{conv}^\circ}
\def\GD{\check{G}}
\def\Mod{\mathrm{Mod}}
\newcommand{\tgg}{\widetilde{\gg}}
\newcommand{\tpi}{\widetilde{\pi}}
\def\lotimes{\@ifnextchar_{\@lotimessub}{\@lotimesnosub}}
\def\@lotimessub_#1{\mathchoice{\mathbin{\mathop{\otimes}^{\mathsf{L}}}_{#1}}%
  {\otimes^{\mathsf{L}}_{#1}}{\otimes^{\mathsf{L}}_{#1}}{\otimes^{\mathsf{L}}_{#1}}}
\def\@lotimesnosub{\mathbin{\mathop{\otimes}^{\mathsf{L}}}}
\newcommand{\Waff}{W_{\mathrm{aff}}}
\newcommand{\Baff}{\BM_{\mathrm{aff}}}
\newcommand{\simto}{\xrightarrow{\sim}}
\newcommand{\Cox}{\mathrm{Cox}}
\def\sheafHom{\mathscr{H} \hspace{-1pt} \mathit{om}}
\newcommand{\Db}{D^{\mathrm{b}}}
\newcommand{\Kb}{K^{\mathrm{b}}}
\newcommand{\Inv}{\mathsf{Inv}}
\newcommand{\vv}{\mathsf{v}}
\newcommand{\dom}{\mathsf{dom}}
\newcommand{\St}{\mathsf{M}}
\newcommand{\Cost}{\mathsf{N}}
\newcommand{\Til}{\mathsf{T}}
\newcommand{\Lder}{\mathsf{L}}
\newcommand{\Rder}{\mathsf{R}}
\begin{document}

\begin{abstract}
In this paper we study Bezrukavnikov's exotic t-structure on the derived category of equivariant coherent sheaves on the Springer resolution of a connected reductive algebraic group defined over a field of positive characteristic with simply-connected derived subgroup.  In particular, we show that the heart of the exotic t-structure is a graded highest weight category, and we study the tilting objects in this heart.  Our main tool is the ``geometric braid group action'' studied by Bezrukavnikov and the second author.
%
\end{abstract}

\title[On the exotic t-structure in positive characteristic]{On the exotic t-structure \\ in positive characteristic}

\author{Carl Mautner}
 \address{Department of Mathematics, University of California, Riverside, CA 92521, USA}
\email{mautner@math.ucr.edu}

\thanks{The material in this article is based upon work supported by the National Science Foundation under Grant No. 0932078 000 while the first author was in residence at the Mathematical Sciences Research Institute in Berkeley, California, during the Fall 2014 semester.  C.M. thanks MSRI and the Max Planck Institut f\"ur Mathematik in Bonn for excellent working conditions.}
  
\author{Simon Riche}
\address{Universit{\'e} Blaise Pascal - Clermont-Ferrand II, Laboratoire de Math{\'e}matiques, CNRS, UMR 6620, Campus universitaire des C{\'e}zeaux, F-63177 Aubi{\`e}re Cedex, France}
\email{simon.riche@math.univ-bpclermont.fr}

\thanks{S.R. was supported by ANR Grants No.~ANR-2010-BLAN-110-02 and ANR-13-BS01-0001-01.}

\maketitle


\section{Introduction}

\subsection{The exotic t-structure}

Let $\GB$ be a connected reductive algebraic group with simply-connected derived subgroup, defined over an algebraically closed field $\FM$ of characteristic $p$.  Let $\Flag$ be its flag variety and $\tNC:=T^*(\Flag)$ be the corresponding Springer resolution. The \emph{exotic t-structure} is a certain t-structure on the category $\Db \Coh^{\GB \times \Gm}(\tNC)$ (defined in terms of an exceptional sequence) introduced by Bezrukavnikov in~\cite{bezru-tilting} in the case $p=0$. In this case, this t-structure was used in the proofs of a conjecture of Humphreys on the cohomology of tilting modules for Lusztig's quantum groups at a root of unity, see~\cite{bezru-tilting}, and of some conjectures of Lusztig on the equivariant $K$-theory of Springer fibers, see~\cite{bm}.\footnote{In~\cite{bm} the term ``exotic t-structure'' is used with a (related but) different meaning; the exotic t-structure of~\cite{bezru-tilting} is called \emph{perversely exotic} in~\emph{loc}.~\emph{cit}.} See also~\cite{dodd} for other properties and applications of this t-structure when $p=0$.


The definition of this t-structure also makes sense when $p>0$. It has appeared recently in~\cite{arider2}, with a condition on $p$, in the construction of an equivalence relating the category $\Db\Coh^{\GB\times \Gm}(\tNC)$ with a category of constructible sheaves on the affine Grassmannian of the Langlands dual complex reductive group. (See~\cite{achar2} for remarks and complements on this work.) Using the equivalence, Achar--Rider deduce some basic properties of the exotic t-structure.  However, to obtain the equivalence, they need to assume that $p$ is large enough so that the spherical parity sheaves on the affine Grassmannian are perverse, which had been shown to hold for $p$ larger than explicit bounds in~\cite{jmw}.

\subsection{Highest weight structure}

In this note, we state the definition and prove the basic properties of the exotic t-structure, in a characteristic-free way and independent of connections to the affine Grassmannian.  Our main motivation is the companion paper~\cite{mr}, where we apply these results to show that the equivalence of categories of~\cite{arider2} (and some extensions thereof) exists under weaker assumptions on $p$.
We are then able to deduce as a corollary that the spherical parity sheaves on the affine Grassmannian are perverse whenever $p$ is a good prime for $\GB$.

In particular, we prove that the heart $\ES^{\GB \times \Gm}(\tNC)$ of the exotic t-structure is a graded highest weight category. In the case $p=0$, this property can be deduced from the relation to perverse sheaves on the affine flag variety of the Langlands dual group $\GD$ due to Bezrukavnikov~\cite{bezru-tilting}. For $p$ greater than explicit bounds, this property is proved in~\cite[Proposition~8.5]{arider2} using the equivalence with constructible sheaves on the affine Grassmannian of $\GD$. Our proof is characteristic-free, much simpler, and geometric.

The main tool in our approach is
the \emph{geometric braid group action} on the category $\Db \Coh^{\GB \times \Gm}(\tNC)$ introduced and studied in~\cite{riche, br}. This tool was not used explicitly in~\cite{bezru-tilting} since it had not been introduced then; the relation with the exotic t-structure was made explicit in~\cite{bm} in the case $p=0$.

\subsection{Tilting objects}

The fact that $\ES^{\GB \times \Gm}(\tNC)$ is a graded highest weight category opens the way to the study of tilting objects in this category. Following arguments of Dodd~\cite{dodd} for $p=0$, we give a ``Bott--Samelson type'' construction for these objects. Assuming that $\GB$ satisfies Jantzen's standard hypotheses (i.e. that $p$ is a good prime for $\GB$ and that the Lie algebra of $\GB$ admits a non-degenerate $\GB$-invariant bilinear form), we show that some of these tilting objects (the ``dominant'' ones) can be described explicitly, and that their standard/costandard multiplicities can be expressed in terms of the corresponding multiplicities for tilting $\GB$-modules. 
We also derive interesting consequences on the structure of the category $\ES^{\GB \times \Gm}(\tNC)$; in particular (under the standard hypotheses) we show that costandard objects are coherent sheaves, and that standard and costandard objects satisfy an $\Ext$-vanishing property in $\Db \Coh(\tNC)$. (The first property is also proved in~\cite{arider2} using the comparison with perverse sheaves. The second one can be deduced from the results of~\cite{bm} in case $p$ is $0$ or bigger than the Coxeter number of $\GB$.)

\subsection{Relation with perverse coherent sheaves}

When $\GB$ satisfies the standard hypotheses, the exotic t-structure on $\Db \Coh^{\GB \times \Gm}(\tNC)$ is closely related to the \emph{perverse coherent} t-structure on $\Db \Coh^{\GB \times \Gm}(\NC)$ studied in~\cite{ab, achar} (where $\NC$ is the nilpotent cone of $\GB$). This relation is explained in detail in~\cite[\S 1.4]{achar2}, so we will not recall it here. Note however that the description of dominant tilting objects in $\ES^{\GB \times \Gm}(\tNC)$ is related to, and was motivated by, a similar description of tilting objects in the category of perverse coherent sheaves obtained in~\cite{minn}.



\subsection{Contents}

In Section~\ref{sec:definitions} we recall the definition of Bezrukavnikov's exotic t-structure in terms of an exceptional sequence.
In Section \ref{sec:standard-costandard} we show that the standard objects have a clean description in terms of the geometric braid group action of~\cite{riche, br}, and we use this description to show that the standard and costandard objects belong to the heart of the exotic t-structure, from which it follows that $\ES^{\GB \times \Gm}(\tNC)$ is a graded highest weight category.  In Section~\ref{sec:tilting} we study the tilting objects in $\ES^{\GB \times \Gm}(\tNC)$; in particular we give a ``Bott--Samelson type'' construction of these objects, and we describe explicitly the indecomposable tilting modules associated with dominant weights in terms of the corresponding tilting $\GB$-modules (when $\GB$ satisfies Jantzen's standard hypotheses).
Finally, Appendix~\ref{sec:appendix} contains a brief review of the definitions and main properties of derived categories of equivariant quasi-coherent sheaves and derived functors between them.

\subsection{Acknowledgements}

We thank Pramod Achar for useful discussions at early stages of this work, and an anonymous referee for his/her careful reading of the paper and helpful suggestions. 

\section{Definitions}
\label{sec:definitions}

\subsection{Notation}
\label{ss:notation-exotic}

Let $\FM$ be an algebraically closed field of characteristic $p \geq 0$. We let $\GB$ be a connected reductive algebraic group over $\FM$ whose derived subgroup is simply connected. Let $\BB \subset \GB$ be a Borel subgroup, and $\TB \subset \BB$ be a maximal torus.
Let $\tg \subset \bg \subset \gg$ be the Lie algebras of $\TB \subset \BB \subset \GB$. Let also $\UB$ be the unipotent radical of $\BB$, and $\ng$ be its Lie algebra. 

For some of our results we will have to assume that $p$ is a good prime for $\GB$ and that there exists a non-degenerate $\GB$-invariant bilinear form on $\gg$. 
In this case we say that $\GB$ is \emph{standard}.

We will consider the varieties
\[
\Flag := \GB/\BB \quad \text{and} \quad \tNC:=T^*(\Flag).
\]
(Here $\Flag$ is the flag variety of $\GB$, a smooth projective variety over $\FM$, and $\tNC$ is the \emph{Springer resolution} of the nilpotent cone of $\GB$.\footnote{More precisely of the nilpotent cone $\NC^*$ in the dual of $\gg$.})
The scheme $\tNC$ is endowed with a natural action\footnote{The action of $\Gm$ considered here agrees with the action considered in~\cite{riche, mr, arider2}, but differs from the action considered in~\cite{bezru-tilting, br} by the automorphism $t \mapsto t^{-1}$ of $\Gm$.} of $\GB \times \Gm$, where the action of $\GB$ is induced by the natural action on $\Flag$, and $t \in \Gm$ acts by multiplication by $t^{-2}$ on the fibers of the projection $\tNC \to \Flag$.
Moreover, there exists a $\GB \times \Gm$-equivariant isomorphism
\[
\tNC \cong \GB \times^{\BB} (\gg/\bg)^*,
\]
where $t \in \Gm$ acts on $(\gg/ \bg)^*$ by multiplication by $t^{-2}$.

We will consider the derived categories of coherent sheaves
\[
D^{\GB \times \Gm}(\tNC):=\Db\Coh^{\GB \times \Gm}(\tNC), \qquad D^{\GB}(\tNC):=\Db\Coh^{\GB}(\tNC).
\]
We denote by
\[
\langle 1 \rangle \colon D^{\GB \times \Gm}(\tNC) \simto D^{\GB \times \Gm}(\tNC)
\]
the functor of tensoring with the one-dimensional tautological $\Gm$-module, and by $\langle n \rangle$ its $n$-th power.\footnote{This notation agrees with~\cite{bezru-tilting, riche, br, mr}, but is opposite to the convention in~\cite{arider2}.} Then for any $\FC,\GC$ in $D^{\GB \times \Gm}(\tNC)$ the forgetful functor induces an isomorphism
\begin{equation}
\label{eqn:Hom-Gm-equiv}
\bigoplus_{m \in \ZM} \Hom_{D^{\GB \times \Gm}(\tNC)}(\FC, \GC \langle m \rangle) \simto \Hom_{D^{\GB}(\tNC)}(\FC, \GC).
\end{equation}

Let $\XB:=X^*(\TB)$, resp.~${\check \XB}:=X_*(\TB)$, be the weight lattice, resp.~the coweight lattice, and $\Phi \subset \XB$, resp.~${\check \Phi} \subset {\check \XB}$, be the roots, resp.~coroots, of $\GB$ (with respect to $\TB$). We let $\ZM \Phi \subset \XB$ be the root lattice. The choice of $\BB$ determines a system of positive roots: more precisely we denote by $\Phi^+ \subset \Phi$ the roots which are \emph{opposite} to the $\TB$-weights in $\bg$. We set $\Phi^- := -\Phi^+$, and denote by ${\check \Phi}^+,{\check \Phi}^-$ the positive and negative coroots, respectively. If $\alpha \in \Phi$, we denote by $\alpha^\vee$ the associated coroot. We let $\rho \in \XB$ 
be a weight such that $\langle \rho, \alpha^\vee \rangle = 1$ for all simple roots $\alpha$. (If $\GB$ is semisimple then $\rho$ is the half sum of positive roots.)


The choice of $\Phi^+$ determines a subset $\XB^+ \subset \XB$ of dominant weights, and a partial order $\preceq$ on $\XB$. Each $\lambda \in \XB$ also determines a line bundle $\OC_{\Flag}(\lambda)$ on $\Flag$. With our conventions, $\OC_{\Flag}(\lambda)$ is ample iff $\lambda \in \XB^+$. We denote by $\OC_{\tNC}(\lambda)$ the pullback of $\OC_{\Flag}(\lambda)$ to $\tNC$.

For $\lambda \in \XB$, we denote by $\co(\lambda)$ the intersection of the convex hull of $\lambda$ with $\lambda + \ZM \Phi$, and by $\coo(\lambda)$ the complement of $W\lambda$ in $\co(\lambda)$. If $\HB$ is any $\FM$-algebraic group, we denote by $\Rep(\HB)$ the category of algebraic $\HB$-modules.

\begin{remark}
In this paper we make use of results from~\cite{riche, br} where it is assumed that $\GB$ is semisimple and simply-connected. However, one can check that all the results we use apply equally well when $\GB$ is a connected reductive group whose derived subgroup is simply connected.  The condition on the derived subgroup is needed 
in particular
in order to ensure that the affine braid group admits the Bernstein presentation described below, which is used 
in~\cite{riche, br}
to define the action of the affine braid group on $D^{\GB \times \Gm}(\tNC)$.
\end{remark}

\subsection{Affine braid group}
\label{ss:Haff}


Let $W$ be the Weyl group of $(\GB,\TB)$, and let $\Waff := W \ltimes \XB$ be the (extended) affine Weyl group. To avoid confusion, for $\lambda \in \XB$ we denote by $t_\lambda$ the corresponding element of $\Waff$.  The subgroup $\Waff^\Cox:=W \ltimes (\ZM \Phi) \subset \Waff$ is a Coxeter group; we choose the Coxeter generators as in~\cite[\S 1.4]{IM}.
The simple reflections which belong to $W$ will be called \emph{finite}; the ones which do not belong to $W$ will be called \emph{affine}. If $\alpha$ is a simple root, we denote by $s_\alpha$ the corresponding (finite) simple reflection.

The length function for our Coxeter structure on $\Waff^\Cox$ satisfies the following formula for $w \in W$ and $\lambda \in \ZM \Phi$ (see \cite[Proposition~1.10]{IM}):
\begin{equation}
\label{eqn:length} 
\ell(w \cdot t_\lambda)=\sum_{\alpha \in \Phi^+ \cap w^{-1} (\Phi^+)} |\langle \lambda,
\alpha^\vee \rangle | + \sum_{\alpha \in
      \Phi^+ \cap w^{-1}(\Phi^-)}
  |1 + \langle \lambda, \alpha^\vee \rangle |.
\end{equation}
We use this formula to extend $\ell$ to the whole of $\Waff$. We denote by $\O$ the subgroup of $\Waff$ consisting of elements of length $0$; it is a finitely generated abelian group isomorphic to $\XB/ \ZM\Phi$ via the composition of natural maps $\O \hookrightarrow W \ltimes \XB \twoheadrightarrow \XB \twoheadrightarrow \XB/\ZM\Phi$. Moreover, the conjugation action of $\Omega$ on $\Waff$ preserves $\Waff^\Cox$ (it sends any simple reflection to a simple reflection), and multiplication induces a group isomorphism $\Omega \ltimes \Waff^\Cox \simto \Waff$.

The following is an easy consequence of~\eqref{eqn:length}; details are left to the reader.

\begin{lem}
\label{lem:length-translations}
For any $\lambda \in \XB$ and $w \in W$ we have
$\ell(t_{w\lambda}) = \ell(t_\lambda)$.\qed
\end{lem}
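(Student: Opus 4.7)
The plan is to apply formula \eqref{eqn:length} directly to both sides, reducing everything to an elementary observation about $W$-action on roots. Specializing \eqref{eqn:length} to $w = e \in W$ and $\mu \in \XB$ (using the extension of $\ell$ to all of $\Waff$), the second sum runs over the empty set $\Phi^+ \cap \Phi^- = \emptyset$, leaving
\[
\ell(t_\mu) \;=\; \sum_{\alpha \in \Phi^+} |\langle \mu, \alpha^\vee \rangle|.
\]
Applying this with $\mu = w\lambda$ and with $\mu = \lambda$, the lemma is reduced to the identity
\[
\sum_{\alpha \in \Phi^+} |\langle w\lambda, \alpha^\vee \rangle| \;=\; \sum_{\alpha \in \Phi^+} |\langle \lambda, \alpha^\vee \rangle|.
\]

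To prove this, I would rewrite the left-hand side as $\sum_{\alpha \in \Phi^+} |\langle \lambda, w^{-1}\alpha^\vee \rangle|$ using the standard adjunction $\langle w\lambda, \alpha^\vee \rangle = \langle \lambda, w^{-1}\alpha^\vee \rangle$. Now as $\alpha$ ranges over $\Phi^+$, the coroot $w^{-1}\alpha^\vee$ ranges over $(w^{-1}\Phi^+)^\vee$. Since $w^{-1}$ permutes the full root system $\Phi$ and sends each pair $\{\beta, -\beta\}$ to the pair $\{w^{-1}\beta, -w^{-1}\beta\}$, the set $w^{-1}\Phi^+$ contains exactly one element from each pair $\{\gamma, -\gamma\}$ with $\gamma \in \Phi^+$. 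Since $|\langle \lambda, \gamma^\vee \rangle| = |\langle \lambda, -\gamma^\vee \rangle|$, re-indexing the sum by the positive representative of each pair yields the right-hand side, completing the proof.

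There is no real obstacle here: the lemma is essentially the observation that the formula \eqref{eqn:length} for $\ell(t_\mu)$ depends only on the multiset $\{|\langle \mu, \alpha^\vee \rangle| : \alpha \in \Phi^+\}$, which is manifestly $W$-invariant in $\mu$ because $W$ permutes the pairs $\{\alpha, -\alpha\}$ of roots and absolute values ignore signs.
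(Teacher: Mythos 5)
Your proof is correct, and it fills in exactly the gap the paper leaves open: the authors state only that the lemma is ``an easy consequence of~\eqref{eqn:length}; details are left to the reader.'' Specializing~\eqref{eqn:length} to the identity element gives $\ell(t_\mu) = \sum_{\alpha \in \Phi^+} |\langle \mu, \alpha^\vee \rangle|$, and your observation that this sum is $W$-invariant in $\mu$ (since $w^{-1}\Phi^+$ picks one representative from each pair $\{\gamma, -\gamma\}$ and absolute values kill signs) is the intended argument.
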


We will also consider the braid group $\Baff$ associated with $\Waff$. It is defined as the group generated by elements $T_w$ for $w \in \Waff$, with relations $T_{vw} = T_v T_w$ for all $v,w \in \Waff$ such that $\ell(vw)=\ell(v)+\ell(w)$. One can define (following Bernstein and Lusztig), for each $\lambda \in \XB$, an element $\theta_\lambda \in \Baff$, see e.g.~\cite[\S 1.1]{riche} for details.  When $\lambda$ is dominant, $\theta_\lambda$ is simply $T_{t_\lambda}$. The affine braid group $\Baff$ admits a second useful presentation (usually called the \emph{Bernstein presentation}), with generators $\{T_w, \ w \in W\}$ and $\{\theta_\lambda, \ \lambda \in \XB\}$, subject to the following relations (where $v,w \in W$, $\lambda, \mu \in \XB$, and $\alpha$ runs over simple roots):
\begin{enumerate}
\item
$T_v T_w = T_{vw}$ \quad if $\ell(vw)=\ell(v)+\ell(w)$;
\item
$\theta_\lambda \theta_\mu = \theta_{\lambda+\mu}$;
\item
$T_{s_\alpha} \theta_\lambda = \theta_\lambda T_{s_\alpha}$ \quad if $\langle \lambda, \alpha^\vee \rangle = 0$;
\item
$\theta_\lambda = T_{s_\alpha} \theta_{\lambda-\alpha} T_{s_\alpha}$ \quad if $\langle \lambda, \alpha^\vee \rangle = 1$.
\end{enumerate}
(See~\cite{br-appendix} for a proof of this fact.)

\begin{lem}
\label{lem:T-t-lambda}
Let $\lambda \in \XB$ and $w \in W$ be such that $w\lambda$ is dominant. Then we have
\[
T_{t_\lambda} = T_{w^{-1}} \cdot \theta_{w\lambda} \cdot (T_{w^{-1}})^{-1} \qquad \text{in $\Baff$.}
\]
\end{lem}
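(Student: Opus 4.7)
My plan is to rewrite the desired identity as $T_{t_\lambda} T_{w^{-1}} = T_{w^{-1}} \theta_{w\lambda}$ and then recognize both sides as $T_x$ for the same element $x \in \Waff$ decomposed in two different ways. Since $w\lambda$ is dominant, $\theta_{w\lambda} = T_{t_{w\lambda}}$, so the identity to prove becomes
\[
T_{t_\lambda} \cdot T_{w^{-1}} \;=\; T_{w^{-1}} \cdot T_{t_{w\lambda}}.
\]
Moreover, in $\Waff$ we have $t_\lambda \cdot w^{-1} = w^{-1} \cdot t_{w\lambda}$ (immediately from $w^{-1} t_{w\lambda} w = t_\lambda$), so both products above are candidates to equal $T_{t_\lambda w^{-1}}$ in $\Baff$, and the whole problem reduces to verifying the two length-additivity statements
\[
\ell(w^{-1} t_{w\lambda}) \;=\; \ell(w^{-1}) + \ell(t_{w\lambda}), \qquad \ell(t_\lambda w^{-1}) \;=\; \ell(t_\lambda) + \ell(w^{-1}),
\]
so that the defining braid relation $T_{uv} = T_u T_v$ may be applied to each factorization.

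Both equalities are routine consequences of formula~\eqref{eqn:length}. For the first one, I would apply~\eqref{eqn:length} with $w$ replaced by $w^{-1}$ and $\lambda$ replaced by $w\lambda$; the dominance of $w\lambda$ ensures $\langle w\lambda, \alpha^\vee \rangle \geq 0$ for each $\alpha \in \Phi^+$, allowing every absolute value to be removed, and the result splits cleanly as
\[
\#\bigl(\Phi^+ \cap w \Phi^-\bigr) \;+\; \sum_{\alpha \in \Phi^+} \langle w\lambda, \alpha^\vee \rangle \;=\; \ell(w^{-1}) + \ell(t_{w\lambda}),
\]
where the second summand is identified as $\ell(t_{w\lambda})$ by a second application of~\eqref{eqn:length}, this time with $w=e$ and $\lambda = w\lambda$. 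For the second additivity statement, use that $t_\lambda w^{-1}$ and $w^{-1} t_{w\lambda}$ are the same element of $\Waff$, hence have the same length, and invoke Lemma~\ref{lem:length-translations} to replace $\ell(t_{w\lambda})$ by $\ell(t_\lambda)$.

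Combining the two additivities with the defining relation in $\Baff$ yields
\[
T_{t_\lambda} T_{w^{-1}} \;=\; T_{t_\lambda w^{-1}} \;=\; T_{w^{-1} t_{w\lambda}} \;=\; T_{w^{-1}} T_{t_{w\lambda}} \;=\; T_{w^{-1}} \theta_{w\lambda},
\]
and the lemma follows by multiplying by $(T_{w^{-1}})^{-1}$ on the right. There is no serious obstacle here; the argument is almost entirely careful bookkeeping in formula~\eqref{eqn:length}. The one point worth emphasizing is that it is dominance of $w\lambda$ (not of $\lambda$ itself) that, together with Lemma~\ref{lem:length-translations}, simultaneously forces both length additivities into a sum without absolute values.
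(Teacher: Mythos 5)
Your proof is correct and follows essentially the same route as the paper: reduce to the two length-additivity statements via~\eqref{eqn:length}, transfer one to the other by $t_\lambda w^{-1} = w^{-1} t_{w\lambda}$ together with Lemma~\ref{lem:length-translations}, and conclude from the defining braid relations and $\theta_{w\lambda} = T_{t_{w\lambda}}$. The only difference is that you spell out the computation from~\eqref{eqn:length} that the paper leaves as a reference.
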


\begin{proof}
We have $t_\lambda w^{-1} = w^{-1} t_{w\lambda}$ in $\Waff$, and $\ell(w^{-1} t_{w\lambda}) = \ell(w^{-1}) + \ell(t_{w\lambda})$ since $w\lambda$ is dominant (see~\eqref{eqn:length}). Using Lemma~\ref{lem:length-translations} we deduce that we also have $\ell(t_\lambda w^{-1})=\ell(t_\lambda) + \ell(w^{-1})$. Hence in $\Baff$ we have
\[
T_{t_\lambda w^{-1}} = T_{t_\lambda} \cdot T_{w^{-1}} = T_{w^{-1}} \cdot T_{t_{w\lambda}}.
\]
Now, since $w\lambda$ is dominant we have $T_{t_{w\lambda}} = \theta_{w\lambda}$, and the lemma follows.
\end{proof}

For $\lambda \in \XB$, we denote by $w_\lambda$ the shortest representative in $W t_\lambda \subset \Waff$, and by $\delta(\lambda)$ the minimal length of an element $v \in W$ such that $v\lambda$ is dominant. The following lemma is probably well known, but we were not able to find a reference in the literature.

\begin{lem}
\label{lem:w-lambda}
Let $\lambda \in \XB$, and let $v \in W$ be of minimal length such that $v\lambda$ is dominant. Then we have
\[
w_\lambda = t_{v\lambda} v = v t_\lambda \qquad \text{and} \qquad \ell(w_\lambda) = \ell(t_\lambda) - \delta(\lambda).
\]
\end{lem}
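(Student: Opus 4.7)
The equalities $w_\lambda = t_{v\lambda} v = v t_\lambda$ in $\Waff = W \ltimes \XB$ are formal: both expressions represent the pair $(v, v\lambda)$. So the real content is that $v t_\lambda$ is the shortest representative of $W t_\lambda$, and that $\ell(v t_\lambda) = \ell(t_\lambda) - \delta(\lambda)$.

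My main tool would be to rewrite the length formula~\eqref{eqn:length}. For $u \in W$, let $I(u) := \Phi^+ \cap u^{-1}(\Phi^-)$ be the inversion set of $u$ (so $\ell(u) = |I(u)|$), and set $S_\lambda := \{\alpha \in \Phi^+ : \langle \lambda, \alpha^\vee \rangle < 0\}$. Separating the sum in~\eqref{eqn:length} according to whether $\alpha \in I(u)$ and whether $\langle \lambda, \alpha^\vee \rangle < 0$, and using that $|1+k| = |k| - 1$ for integers $k \leq -1$ while $|1+k| = |k| + 1$ for integers $k \geq 0$, a direct manipulation yields
\[
\ell(u t_\lambda) = \ell(t_\lambda) + |I(u) \setminus S_\lambda| - |I(u) \cap S_\lambda|.
\]
This immediately gives the uniform lower bound $\ell(u t_\lambda) \geq \ell(t_\lambda) - |S_\lambda|$, with equality iff $I(u) = S_\lambda$.

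It therefore suffices to show that $I(v) = S_\lambda$, since this also forces $\ell(v) = |S_\lambda| = \delta(\lambda)$, and then by the formula above $\ell(v t_\lambda) = \ell(t_\lambda) - \delta(\lambda)$ is the minimum. To this end I would let $\lambda^+$ be the unique dominant weight in $W\lambda$, let $J$ be the set of simple roots annihilating $\lambda^+$, and let $W_J \subseteq W$ be the corresponding standard parabolic subgroup (i.e., the stabilizer of $\lambda^+$). Fix any $u_0 \in W$ with $u_0 \lambda = \lambda^+$; then $\{u \in W : u\lambda \in \XB^+\} = W_J u_0$, and $v$ is the minimal-length element of this coset, which is characterized by $v^{-1}(\Phi_J^+) \subseteq \Phi^+$ (where $\Phi_J$ denotes the root subsystem spanned by $J$, with positive part $\Phi_J^+ = \Phi_J \cap \Phi^+$). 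Using the identity $\langle \lambda, \alpha^\vee \rangle = \langle \lambda^+, (v\alpha)^\vee \rangle$, one checks for $\alpha \in \Phi^+$ that $\alpha \in S_\lambda$ iff $v\alpha \in \Phi^- \setminus \Phi_J^-$, whereas $\alpha \in I(v)$ iff $v\alpha \in \Phi^-$; and the case $v\alpha \in \Phi_J^-$ with $\alpha \in \Phi^+$ is excluded by $v^{-1}(\Phi_J^-) \subseteq \Phi^-$, which is equivalent to the characterization of $v$. Hence $I(v) = S_\lambda$. I expect the main obstacle to be precisely this inversion-set calculation; everything else reduces to bookkeeping around~\eqref{eqn:length}.
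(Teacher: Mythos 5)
Your proof is correct and takes a genuinely different route from the paper's. You derive from~\eqref{eqn:length} the exact identity $\ell(u t_\lambda) = \ell(t_\lambda) + |I(u)\setminus S_\lambda| - |I(u)\cap S_\lambda|$ for every $u \in W$ (with $I(u)$ the inversion set of $u$ and $S_\lambda = \{\alpha \in \Phi^+ : \langle\lambda,\alpha^\vee\rangle < 0\}$), from which the minimum value $\ell(t_\lambda) - |S_\lambda|$ and the characterization of minimizers (those $u$ with $I(u)=S_\lambda$) fall out immediately, and you then check $I(v)=S_\lambda$ by a short inversion-set calculation using $\{u : u\lambda \in \XB^+\} = W_J u_0$ together with the minimal coset representative criterion $v^{-1}(\Phi_J^+)\subseteq\Phi^+$. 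The paper instead argues by induction on $\delta(\lambda)$: writing $v = us$ with $u<v$ and $s$ a finite simple reflection, it first shows $w_{s\lambda} s$ is minimal in $W t_\lambda$ by invoking Soergel's Bruhat-order lifting observation, and then compares the two instances of~\eqref{eqn:length} for $\ell(w_\lambda)$ and $\ell(w_{s\lambda})$ to get $\ell(w_\lambda)=\ell(w_{s\lambda})-1$. Your approach avoids the appeal to Soergel's lemma and yields strictly more information (the length of every coset representative $u t_\lambda$, the identification of the set of minimizers, and the equality $\delta(\lambda)=|S_\lambda|$ as a byproduct) at the mild cost of bringing in the parabolic subgroup $W_J$ and minimal coset representatives. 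One small point of care: the criterion $v^{-1}(\Phi_J^+)\subseteq\Phi^+$ characterizes the minimal-length element only among elements of the coset $W_J u_0$, but since you have already identified $v$ as lying in that coset, this is harmless.
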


\begin{proof}
We prove the claim by induction on $\delta(\lambda)$. If $\delta(\lambda)=0$, i.e.~if $\lambda$ is dominant, then from~\eqref{eqn:length} we deduce that $w_\lambda=t_\lambda$, and we are done.

Now suppose that $\lambda$ is not dominant. Write $v=us$, where $u \in W$ and $s$ is a finite simple reflection such that $u < v$. Then by induction we have $w_{s\lambda} = t_{v\lambda} u = u t_{s\lambda}$. The element $w_{s\lambda} s = v t_\lambda$ belongs to $W t_\lambda$. If $w_{s\lambda} s$ is not minimal in $W t_\lambda$ then, as Soergel observes in~\cite[%
p.~86]{soergel-kl}, there exists a finite simple reflection $r$ such that both $rw_{s\lambda} > w_{s\lambda}$ and $rw_{s\lambda}s < w_{s\lambda} s$.  Taken together, these imply that
 $w_{s\lambda} s = r w_{s\lambda}$. Since the left-hand side belongs to $Wt_\lambda$ and the right-hand side to $W t_{s\lambda}$, it follows that $\lambda=s\lambda$, contradicting the minimality of $v$. This proves the induction step for the first equality.

To prove the second equality, we use~\eqref{eqn:length} to obtain that
\[
\ell(w_\lambda)=\sum_{\alpha \in \Phi^+ \cap v^{-1} ( \Phi^+)} |\langle \lambda,
  \alpha^{\vee} \rangle | + \sum_{\alpha \in \Phi^+ \cap v^{-1}(\Phi^-)}
  |1 + \langle \lambda, \alpha^\vee \rangle |
\]
and
\[
\ell(w_{s\lambda})=\sum_{\alpha \in \Phi^+ \cap sv^{-1} (\Phi^+)} |\langle s\lambda,
  \alpha^{\vee} \rangle | + \sum_{\alpha \in \Phi^+ \cap sv^{-1}(\Phi^-)}
  |1 + \langle s\lambda, \alpha^{\vee} \rangle |.
\]
One can rewrite the second equality as
\[
\ell(w_{s\lambda})=\sum_{\alpha \in s( \Phi^+) \cap v^{-1} ( \Phi^+)} |\langle \lambda,
  \alpha^{\vee} \rangle | + \sum_{\alpha \in s(\Phi^+) \cap v^{-1}(\Phi^-)}
  |1 + \langle \lambda, \alpha^{\vee} \rangle |.
\]
Now we observe that if $\beta$ is the simple root associated with $s$, then $s(\Phi^+)=(\Phi^+ \smallsetminus \{\beta\}) \cup \{- \beta\}$, and that $\beta \in v^{-1}( \Phi^-)$ since $vs<v$. Using also the fact that $\langle \lambda, \beta^{\vee} \rangle < 0$, this implies that $\ell(w_\lambda)=\ell(w_{s\lambda})-1$, which completes the induction step.
\end{proof}




\subsection{Reminder on graded exceptional sequences}
\label{ss:reminder}

Let $\Bbbk$ be a field, 
and let $\mathsf{D}$ be a $\Bbbk$-linear triangulated category endowed with an autoequivalence $X \mapsto X \langle 1 \rangle$. For any $m \in \ZM$, we denote by $X \mapsto X \langle m \rangle$ the $m$-th power of this autoequivalence. We assume that $\mathsf{D}$ 
is of graded finite type, i.e.~such that for any $X,Y$ in $\mathsf{D}$ the $\Bbbk$-vector space $\bigoplus_{n,m} \Hom^n(X,Y \langle m \rangle)$ is finite dimensional. Then a collection $\{\nabla^i, \, i \in I\}$ of objects of $\mathsf{D}$ (where $(I,\leq)$ is a partially ordered set) is called a \emph{graded exceptional sequence} if it satisfies
\[
\Hom^n(\nabla^i, \nabla^j \langle m \rangle)=0
\]
if $i \not\geq j$ or if $i=j$ and $(n,m) \neq (0,0)$, and moreover $\Hom(\nabla^i, \nabla^i) = \Bbbk$.

From now on we fix a graded exceptional sequence $\{\nabla^i, \, i \in I\}$.
If $(I, \leq)$ is 
isomorphic to a disjoint union of copies of $\ZM_{\geq 0}$, then there exists a unique collection $\{\Delta^i, \, i \in I\}$ of objects of $\mathsf{D}$ which satisfy
\[
\Hom^n(\Delta^i, \nabla^j \langle m \rangle)=0 \ \text{if $i>j$} \qquad \text{and} \qquad \Delta^i \cong \nabla^i \mod \mathsf{D}_{<i}
\]
(where $\mathsf{D}_{<i}$ is the full triangulated subcategory of $\mathsf{D}$ generated by the objects $\nabla^j \langle m \rangle$ for $j<i$ and $m \in \ZM$, and the right-hand side means that the images of $\Delta^i$ and $\nabla^i$ in the Verdier quotient $\mathsf{D}/\mathsf{D}_{<i}$ are isomorphic), see~\cite[Proposition~3]{bezru-tilting}. This collection is called the \emph{dual graded exceptional sequence}; it is a graded exceptional sequence for $I$ equipped with the order opposite to $\leq$.
These objects automatically satisfy the condition
\[
\Hom^n(\Delta^i, \nabla^j \langle m \rangle) = \begin{cases}
\Bbbk & \text{if $i=j$ and $n=m=0$;} \\
0 & \text{otherwise.}
\end{cases}
\]


\begin{lem}
\label{lem:standard-Hom-gr}
Assume that the objects $\{\nabla^j \langle m \rangle, \, j \in I, \, m \in \ZM\}$ generate $\mathsf{D}$ as a triangulated category, and that $(I, \leq)$ is 
isomorphic to a disjoint union of copies of $\ZM_{\geq 0}$. If $i \in I$ and $X \in \mathsf{D}$ satisfy
\[
\Hom^n(X, \nabla^j \langle m \rangle) = \begin{cases}
\Bbbk & \text{if $i=j$ and $n=m=0$;} \\
0 & \text{otherwise,}
\end{cases}
\]
then $X \cong \Delta^i$.
\end{lem}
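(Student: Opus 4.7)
The plan is to verify that $X$ satisfies the two conditions characterizing $\Delta^i$ recalled just before the lemma, and then invoke the uniqueness statement from~\cite[Proposition~3]{bezru-tilting}. The first condition, $\Hom^n(X, \nabla^j\langle m\rangle)=0$ when $i > j$, is immediate from the hypothesis on $X$ (since $j < i$ forces $j \neq i$). The substantive work is in verifying the second condition, namely that $X$ is isomorphic to $\nabla^i$ in the Verdier quotient $\mathsf{D}/\mathsf{D}_{<i}$.

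For this, I would pick a morphism $\phi \colon X \to \nabla^i$ spanning $\Hom(X,\nabla^i) \cong \Bbbk$, complete it to a distinguished triangle $X \xrightarrow{\phi} \nabla^i \to C \to X[1]$, and show that $C \in \mathsf{D}_{<i}$. Applying $\Hom(-,\nabla^j\langle m\rangle)$ to this triangle: when $j = i$, the induced map $\phi^* \colon \Hom^n(\nabla^i, \nabla^i\langle m\rangle) \to \Hom^n(X, \nabla^i\langle m\rangle)$ is an isomorphism (both sides are $\Bbbk$ at $(n,m)=(0,0)$ and vanish otherwise, and $\phi^*$ sends $\id$ to the nonzero element $\phi$); when $j \neq i$ with $j \not< i$, the exceptional property gives $\Hom^n(\nabla^i, \nabla^j\langle m\rangle) = 0$ (since $i \not\geq j$), while the hypothesis gives $\Hom^n(X, \nabla^j\langle m\rangle) = 0$. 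The long exact sequence then forces $\Hom^n(C, \nabla^j\langle m\rangle) = 0$ for all $j \not< i$ and all $n, m$, so $C$ lies in the left orthogonal of the full triangulated subcategory $\mathsf{D}_{\not<i}$ generated by $\{\nabla^j\langle m\rangle : j \not< i\}$.

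To deduce from this that $C \in \mathsf{D}_{<i}$, I would establish the semi-orthogonal decomposition $\mathsf{D} = \langle \mathsf{D}_{<i}, \mathsf{D}_{\not<i}\rangle$; a standard argument using the associated triangle for any object in ${}^\perp \mathsf{D}_{\not<i}$ then gives ${}^\perp \mathsf{D}_{\not<i} = \mathsf{D}_{<i}$. Generation of $\mathsf{D}$ by these two subcategories is automatic from the generation hypothesis on the $\nabla^j\langle m\rangle$. The key vanishing $\Hom(\mathsf{D}_{<i}, \mathsf{D}_{\not<i}) = 0$ uses the shape of $(I,\leq)$ as a disjoint union of copies of $\ZM_{\geq 0}$: for $j < i$ and $k \not< i$ one checks directly that $j \not\geq k$ (either $k \geq i > j$ inside the component of $i$, or $k$ and $j$ lie in different components), and exceptionality yields $\Hom^n(\nabla^j, \nabla^k\langle m\rangle) = 0$.

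The main obstacle in this plan is the identification ${}^\perp \mathsf{D}_{\not<i} = \mathsf{D}_{<i}$, which relies critically on the special shape of $(I, \leq)$---the same hypothesis that enables the very construction of the dual sequence $\{\Delta^i\}$.
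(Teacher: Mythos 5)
Your proof is correct, but it follows a genuinely different route from the paper's. The paper never passes through the orthogonality ${}^\perp \mathsf{D}_{\not<i} = \mathsf{D}_{<i}$; instead it works directly with the known object $\Delta^i$. Concretely: it chooses a nonzero $\varphi \colon \Delta^i \to \nabla^i$ and the nonzero $\psi \colon X \to \nabla^i$, uses the fact (built into the construction of the dual sequence) that $C := \cone(\varphi)$ lies in $\mathsf{D}_{<i}$, and then observes that the hypothesis on $X$ forces $\Hom^\bullet(X, C) = 0$, so $\psi$ factors through a map $\psi' \colon X \to \Delta^i$. Finally, the cone of $\psi'$ is killed by $\Hom^\bullet(-, \nabla^j\langle m\rangle)$ for every $j$ (for $j = i$ because both $X$ and $\Delta^i$ hit $\nabla^i$ with the same one-dimensional $\Hom$, for $j \neq i$ because both vanish), so it is zero by the generation hypothesis. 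This is shorter than your argument because the whole semi-orthogonal-decomposition step, which you correctly flag as the main obstacle and which amounts to re-proving part of the mutation machinery underlying~\cite[Proposition~3]{bezru-tilting}, is sidestepped: the key triangle $\Delta^i \to \nabla^i \to C$ with $C \in \mathsf{D}_{<i}$ is already available from the construction of $\Delta^i$, so one only needs an elementary long-exact-sequence computation. Your approach buys conceptual transparency (you literally verify the two conditions characterizing $\Delta^i$ and then invoke uniqueness), but at the cost of reconstructing a semi-orthogonal decomposition $\mathsf{D} = \langle \mathsf{D}_{\not<i}, \mathsf{D}_{<i}\rangle$ and the identification of its perpendicular, which, while standard, is precisely the nontrivial input the paper manages to avoid.
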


\begin{proof}
Let us choose non-zero morphisms $\varphi \colon \Delta^i \to \nabla^i$ and $\psi \colon X \to \nabla^i$, and let $C$ denote the cone of $\varphi$. Then $C$ belongs to $\mathsf{D}_{<i}$. Applying the functor $\Hom(X,-)$ to the distinguished triangle $\Delta^i \xrightarrow{\varphi} \nabla^i \to C \xrightarrow{[1]}$ we obtain an exact sequence
\[
\Hom^{-1}(X,C) \to \Hom(X, \Delta^i) \to \Hom(X, \nabla^i) \to \Hom(X,C).
\]
The condition on $X$ implies that the first and fourth terms vanish. Hence $\psi$ factors through a morphism $\psi' \colon X \to \Delta^i$. It is easy to check that the cone $C'$ of $\psi'$ satisfies $\Hom^n(C', \nabla^j \langle m \rangle)=0$ for all $j \in I$ and all $n,m \in \ZM$. Using our first assumption, it follows that $C'=0$, i.e.~that $\psi'$ is an isomorphism.
\end{proof}

Assume as above that the objects $\{\nabla^j \langle m \rangle, \, j \in I, \, m \in \ZM\}$ generate $\mathsf{D}$ as a triangulated category, and that $(I, \leq)$ is 
isomorphic to a disjoint union of copies of $\ZM_{\geq 0}$.
Define $\mathsf{D}^{\geq 0}$ as the full subcategory of $\mathsf{D}$ generated under extensions by the objects $\nabla^i \langle m \rangle [n]$ with $i \in I$, $m \in \ZM$ and $n \in \ZM_{\leq 0}$, and $\mathsf{D}^{\leq 0}$ as the full subcategory of $\mathsf{D}$ generated under extensions by the objects $\Delta^i \langle m \rangle [n]$ with $i \in I$, $m \in \ZM$ and $n \in \ZM_{\geq 0}$. Then by~\cite[Proposition~4]{bezru-tilting}, the pair $(\mathsf{D}^{\leq 0},\mathsf{D}^{\geq 0})$ is a bounded t-structure on $\mathsf{D}$, called the t-structure associated with the graded exceptional sequence $\{\nabla^i, \, i \in I\}$. Note that the functor $\langle 1 \rangle$ is t-exact.


\subsection{Line bundles}


The following lemma is well known.

\begin{lem}
\label{lem:line-bundles}
Let $\lambda, \mu \in \XB$ and $n,m \in \ZM$.
\begin{enumerate}
\item
\label{it:Hom-line-bundles}
We have $\Hom^n_{D^{\GB \times \Gm}(\tNC)}(\OC_{\tNC}(\lambda), \OC_{\tNC}(\mu) \langle m \rangle)=0$ unless $\mu \preceq \lambda$. 
\item
\label{it:Hom-O}
We have 
\[
\Hom^n_{D^{\GB \times \Gm}(\tNC)}(\OC_{\tNC}, \OC_{\tNC} \langle m \rangle) = \begin{cases}
\FM & \text{if $n=m=0$;} \\
0 & \text{otherwise.}
\end{cases}
\]
\item
\label{it:Hom-finite-dim}
The $\FM$-vector space
\[
\bigoplus_{n,m \in \ZM} \Hom^n_{D^{\GB \times \Gm}(\tNC)}(\OC_{\tNC}(\lambda), \OC_{\tNC}(\mu) \langle m \rangle)
\]
is finite-dimensional.
\end{enumerate}
\end{lem}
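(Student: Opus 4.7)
The plan is to reduce all three statements to computations of rational $\BB$-cohomology. Since $\tNC = \GB \times^{\BB} (\gg/\bg)^*$, the projection $\pi \colon \tNC \to \Flag$ is affine, and $\pi_* \OC_{\tNC}$ is the $\GB \times \Gm$-equivariant sheaf associated to the $\BB \times \Gm$-module $\mathrm{Sym}(\gg/\bg)$ (with $\gg/\bg$ placed in $\Gm$-degree $2$). Combining the projection formula with the equivalence $\QCoh^{\GB \times \Gm}(\Flag) \cong \Rep(\BB \times \Gm)$ gives, for an appropriate sign convention in the correspondence between line bundles on $\Flag$ and $\BB$-characters,
\[
\bigoplus_{n,m} \Hom^n_{D^{\GB \times \Gm}(\tNC)}(\OC_{\tNC}(\lambda), \OC_{\tNC}(\mu) \langle m \rangle) \cong H^\bullet\bigl(\BB, \mathrm{Sym}(\gg/\bg) \otimes \FM_{\mu - \lambda}\bigr),
\]
with the $\Gm$-grading on the right-hand side tracking $m$. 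Since $\BB$ is solvable, each $\mathrm{Sym}^k(\gg/\bg)$ admits a $\BB$-stable filtration with one-dimensional subquotients $\FM_\chi$ for $\chi \in \ZM_{\geq 0} \Phi^+$ (the $\TB$-weights of $\gg/\bg$ are the positive roots), and the long exact sequences in cohomology reduce parts~(1) and~(2) to computing $H^n(\BB, \FM_\nu)$ for various $\nu$.

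By Hochschild--Serre for $1 \to \UB \to \BB \to \TB \to 1$, together with the vanishing of positive algebraic cohomology of the torus on rational modules, $H^n(\BB, \FM_\nu)$ identifies with the $(-\nu)$-weight component of $H^n(\UB, \FM)$. The key input, valid in any characteristic, is that the $\TB$-weights of $H^\bullet(\UB, \FM)$ all lie in $\ZM_{\geq 0} \Phi^+$, with $0$ appearing only in $H^0 = \FM$. This is proved by inducting along the root-subgroup filtration of $\UB$: Hochschild--Serre reduces to each root subgroup $\UB_\alpha \cong \Ga$ (with $\alpha \in \Phi^-$ in the paper's convention), and the $\TB$-weights of $H^\bullet(\Ga, \FM)$ lie in $\ZM_{\geq 0}(-\alpha) \subset \ZM_{\geq 0} \Phi^+$, accounting even for the extra Frobenius-type classes present in positive characteristic.

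Parts~(1) and~(2) then follow: nonvanishing of $H^n(\BB, \FM_{\mu - \lambda + \chi})$ for some $\chi \in \ZM_{\geq 0} \Phi^+$ forces $-(\mu - \lambda + \chi) \in \ZM_{\geq 0} \Phi^+$, giving $\lambda - \mu \in \chi + \ZM_{\geq 0} \Phi^+ \subset \ZM_{\geq 0} \Phi^+$, i.e.~$\mu \preceq \lambda$; for~(2) with $\lambda = \mu$, only $\chi = 0$ can contribute, leaving precisely $H^0(\BB, \FM) = \FM$ in bidegree $(n,m) = (0,0)$. For part~(3) the cleanest approach is independent: the Springer map $\tNC \to \NC$ is proper with $\NC$ affine, so each $H^n(\tNC, \OC_{\tNC}(\mu - \lambda))$ is a finitely generated $\FM[\NC]$-module, nonzero only for $n$ bounded by the maximal Springer fibre dimension; its $\GB$-invariants form a finitely generated module over $\FM[\NC]^{\GB} = \FM$, hence are finite-dimensional, and the $\Gm$-decomposition merely distributes this finite total across the index $m$. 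The principal obstacle is the characteristic-$p$ form of the weight bound on $H^\bullet(\UB, \FM)$: in characteristic~$0$ this is transparent from $H^\bullet(\UB, \FM) \cong \Lambda^\bullet \ng^*$, whereas in positive characteristic the much richer structure forces one to argue through root subgroups, but the essential positivity of weights and uniqueness of the trivial-weight class in degree~$0$ both persist.
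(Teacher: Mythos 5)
For parts~(1) and~(2) your route is essentially the paper's: reduce via $\tNC \cong \GB \times^\BB (\gg/\bg)^*$ to computing $\Hom^\bullet_{\Rep(\BB)}(\FM, \mathrm{S}(\gg/\bg) \otimes \FM_{\mu-\lambda})$, then use the positivity constraint on the $\TB$-weights $\nu$ for which $H^\bullet(\BB, \FM_\nu)$ can be nonzero. Where you sketch a Hochschild--Serre induction through root subgroups, the paper simply cites \cite[Prop.~II.4.10 and Cor.~II.4.11]{jantzen}, which are proved by precisely this mechanism; so there is no genuine divergence, though the inductive step (tracking the $\TB$-weights of $H^\bullet(\Ga, M)$ when $M$ is an infinite-dimensional module rather than the trivial one) is more delicate than your summary lets on.

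For part~(3) there is a genuine gap. You establish that each $H^n(\tNC, \OC_{\tNC}(\mu-\lambda))$ is a finitely generated $\FM[\NC]$-module, and that its ordinary $\GB$-invariants are therefore finite-dimensional. But the space in the lemma is $\Hom^n_{D^{\GB\times\Gm}(\tNC)}$, which by Proposition~\ref{prop:Hom-equivariant} is the cohomology of the \emph{derived} invariants $\Inv^\GB$ applied to the non-equivariant $\RHom$ complex. In positive characteristic $\GB$ is not linearly reductive, so this is not the same as taking ordinary invariants of cohomology: in the hypercohomology spectral sequence with $E_2^{ij} = H^i(\GB, H^j(\tNC, \OC_{\tNC}(\mu-\lambda)))$ converging to $\Hom^{i+j}_{D^{\GB}(\tNC)}(\OC_{\tNC}(\lambda), \OC_{\tNC}(\mu))$, you control only the column $i=0$, and there is no a priori reason for the columns $i>0$ to vanish or even to contribute only finitely. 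The paper's reduction to $\BB$-cohomology is exactly what makes the finiteness accessible: one filters $\mathrm{S}(\gg/\bg) \otimes \FM_{\mu-\lambda}$ by its one-dimensional $\TB$-weight spaces $\FM_\nu$, notes that only finitely many of the occurring $\nu$ can satisfy the necessary condition for $H^\bullet(\BB, \FM_\nu) \neq 0$, and that for each such $\nu$ the total $\BB$-cohomology is finite-dimensional, both by \cite[Prop.~II.4.10]{jantzen}. To rescue your approach you would need a separate argument showing that $H^i(\GB, H^j(\tNC, \OC_{\tNC}(\mu-\lambda)))$ vanishes for all but finitely many $i$, which is not automatic and would presumably reimport the $\BB$-cohomology computation through the back door.
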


\begin{proof}
Statement
\eqref{it:Hom-line-bundles} can be proved 
by the same arguments as in the proof of~\cite[Lemma 1.11.8]{br}. Now we consider~\eqref{it:Hom-O}. Using~\eqref{eqn:Hom-Gm-equiv}, it is enough to prove a similar claim for morphisms in $D^\GB(\tNC)$. Recall that we have a natural isomorphism $\tNC \cong \GB \times^{\BB} (\gg/\bg)^*$.
We deduce an equivalence of categories $D^{\GB}(\tNC) \cong \Db \Coh^{\BB}( (\gg/\bg)^* )$, hence an isomorphism
\[
\Hom^n_{D^{\GB}(\tNC)}(\OC_{\tNC}, \OC_{\tNC}) \cong \Hom^n_{\Db \Coh^{\BB}( (\gg/\bg)^* )}(\OC_{(\gg/\bg
)^*}, \OC_{(\gg/\bg)^*}).
\]
The right-hand side is isomorphic to $\Hom^n_{\mathsf{Rep}(\BB)}(\FM, \mathrm{S}(\gg/\bg))$, where $\mathrm{S}(\gg/\bg)$ is the symmetric algebra of the $\BB$-module $\gg/\bg$. Now if $\mathrm{S}^+(\gg/\bg)$ denotes the kernel of the natural morphism $\mathrm{S}(\gg/\bg) \to \FM$, then
$\Hom^m_{\mathsf{Rep}(\BB)}(\FM, \mathrm{S}^+(\gg/\bg))=0$ for all $m$, by~\cite[Proposition~II.4.10(b)]{jantzen}. We deduce an isomorphism
\[
\Hom^n_{\mathsf{Rep}(\BB)}(\FM, \mathrm{S}(\gg/\bg)) \cong \Hom^n_{\mathsf{Rep}(\BB)}(\FM, \FM).
\]
The right-hand side is described in~\cite[Corollary~II.4.11]{jantzen}, and the proof of~\eqref{it:Hom-O} is complete.

Finally, we consider~\eqref{it:Hom-finite-dim}. Using again~\eqref{eqn:Hom-Gm-equiv}, what we have to prove is that the vector space
\[
\bigoplus_{n \in \ZM} \Hom^n_{D^{\GB}(\tNC)}(\OC_{\tNC}(\lambda), \OC_{\tNC}(\mu))
\]
is finite dimensional. As above, this amounts to showing that the vector space
\[
\bigoplus_{n \in \ZM} \Hom^n_{\mathsf{Rep}(\BB)}(\FM, \mathrm{S}(\gg/\bg) \otimes \FM_{\mu-\lambda})
\]
is finite dimensional. However, there are only finitely many $\TB$-weights in $\mathrm{S}(\gg/\bg) \otimes \FM_{\mu-\lambda}$ whose opposite is a sum of positive roots. Hence, using again~\cite[Proposition~II.4.10(b)]{jantzen}, it suffices to prove that for any $\nu \in \XB$ the vector space
\[
\bigoplus_{n \in \ZM} \Hom^n_{\mathsf{Rep}(\BB)}(\FM, \FM_{\nu})
\]
is finite dimensional. The latter fact follows from~\cite[Proposition~II.4.10]{jantzen}.
\end{proof}

\begin{cor}
\label{cor:exceptional-sequence}
\begin{enumerate}
\item
\label{it:generation}
The objects $\{\OC_{\tNC}(\lambda) \langle n \rangle, \lambda \in \XB, n \in \ZM\}$ generate the triangulated category $D^{\GB \times \Gm}(\tNC)$.
\item
\label{it:finite-type}
The category $D^{\GB \times \Gm}(\tNC)$ is of graded finite type.
\item
\label{it:exceptional-sequence}
The objects $\{\OC_{\tNC}(\lambda), \lambda \in \XB\}$ of $D^{\GB \times \Gm}(\tNC)$ form a graded exceptional sequence para\-metrized by the partially ordered set $(\XB, \preceq)$.
\end{enumerate}
\end{cor}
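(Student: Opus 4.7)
The plan is to handle the three items in order, with \eqref{it:exceptional-sequence} dropping out formally once \eqref{it:generation} is known and Lemma~\ref{lem:line-bundles} is in hand.

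For \eqref{it:generation}, I would exploit the $\GB \times \Gm$-equivariant isomorphism $\tNC \cong \GB \times^\BB (\gg/\bg)^*$ recalled in Section~\ref{ss:notation-exotic}, which yields an equivalence of derived categories
\[
D^{\GB \times \Gm}(\tNC) \simto \Db \Coh^{\BB \times \Gm}((\gg/\bg)^*).
\]
Under this equivalence, each line bundle $\OC_\tNC(\lambda)\langle n\rangle$ is identified with the rank-one free module on $(\gg/\bg)^*$ twisted by a character $\FM_{-\lambda}\langle n\rangle$ of $\BB \times \Gm$. Since $(\gg/\bg)^*$ is an affine space (hence of finite global dimension), any object on the right-hand side admits a finite resolution by equivariant free modules of the form $\OC_{(\gg/\bg)^*}\otimes V$, with $V$ a finite-dimensional $\BB\times\Gm$-representation. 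Because $\BB$ is solvable, every such $V$ admits a composition series whose one-dimensional quotients are characters $\FM_\mu\langle m\rangle$. Translating back to $D^{\GB\times\Gm}(\tNC)$ yields generation by the $\OC_\tNC(\lambda)\langle n\rangle$.

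For \eqref{it:finite-type}, Lemma~\ref{lem:line-bundles}\eqref{it:Hom-finite-dim} already covers pairs of line bundles, so my task is to propagate finite-dimensionality through the triangulated structure. I would observe that, for a fixed second variable $Y$, the full subcategory of objects $X$ such that $\bigoplus_{n,m}\Hom^n(X,Y\langle m\rangle)$ is finite-dimensional is stable under shifts, finite direct sums, direct summands and cones (the latter via a standard five-lemma argument applied to the long exact sequence of Hom groups). Applying this stability in both variables, together with \eqref{it:generation}, spreads the property from pairs of line bundles to all pairs of objects. Part \eqref{it:exceptional-sequence} is then immediate: the vanishing for $\mu \not\preceq \lambda$ is exactly Lemma~\ref{lem:line-bundles}\eqref{it:Hom-line-bundles}, and for $\mu=\lambda$ tensoring with $\OC_\tNC(-\lambda)$ reduces the endomorphism computation to Lemma~\ref{lem:line-bundles}\eqref{it:Hom-O}, which gives $\FM$ in bidegree $(0,0)$ and zero elsewhere.

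The main subtlety lies in \eqref{it:generation}: one has to carefully track the $\GB\times\Gm$-equivariance and the $\Gm$-normalization convention on $(\gg/\bg)^*$ through the equivalence, and verify that the one-dimensional composition factors of the $V$'s appearing in free resolutions really do sweep out every character $\FM_\mu\langle m\rangle$ of $\BB \times \Gm$. Once this translation is secured, the remaining two parts are essentially formal.
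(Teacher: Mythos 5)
Your proposal is correct and matches the paper's approach: for part~\eqref{it:generation} the paper simply cites~\cite[Corollary~5.8]{achar} (with a footnote on translating from $\GB\times^\BB\ng$ to $\GB\times^\BB(\gg/\bg)^*$), and the argument you sketch via the equivalence with $\Db\Coh^{\BB\times\Gm}((\gg/\bg)^*)$, Hilbert syzygy and composition series for $\BB$-modules is essentially the content of that reference, while your treatments of \eqref{it:finite-type} and \eqref{it:exceptional-sequence} are exactly the paper's formal deductions from Lemma~\ref{lem:line-bundles}. One place that deserves a touch more care is the assertion that finite global dimension produces a finite \emph{equivariant} free resolution: since $\BB$ is not reductive, the $N$-th syzygy, though projective (hence graded-free), need not be \emph{equivariantly} of the form $\OC_{(\gg/\bg)^*}\otimes V$; however, one can check that it admits a finite $\BB\times\Gm$-equivariant filtration with such free subquotients (repeatedly splitting off the lowest $\Gm$-degree piece, which is a $\BB$-stable subspace generating a free submodule with free quotient), and this generation statement is all that is needed.
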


\begin{proof}
Statement~\eqref{it:generation} is proved in~\cite[Corollary~5.8]{achar}.\footnote{In~\cite{achar} it is assumed that $p$ is good, and $\tNC$ is defined as $\GB \times^\BB \ng$ rather than $\GB \times^\BB (\gg/\bg)^*$. However, one can easily check that the proof of~\cite[Corollary~5.8]{achar} applies verbatim in our setting.} Then~\eqref{it:finite-type} follows from~\eqref{it:generation} and Lemma~\ref{lem:line-bundles}\eqref{it:Hom-finite-dim}. And~\eqref{it:exceptional-sequence} is a restatement of Lemma~\ref{lem:line-bundles}\eqref{it:Hom-line-bundles}--\eqref{it:Hom-O}.
\end{proof}

\subsection{Exotic t-structure}
\label{ss:exotic-t-structure}

As in~\cite[\S 2.3]{bezru-tilting} we define another order $\leq$ on $\XB$, as follows: we set $\lambda \leq \mu$ if $w_\lambda$ preceeds $w_\mu$ in the Bruhat order (where $w_\lambda$ is defined in~\S\ref{ss:Haff}). One can easily check that this order coincides with $\preceq$ on $\XB^+$, and on any $W$-orbit in $\XB$. It is also clear that if $\lambda - \mu \notin \ZM \Phi$, then $\lambda$ and $\mu$ are not comparable.

For each $\ZM \Phi$-coset $\Lambda$ in $\XB$ we choose a refinement $\leq'$ of $\leq$ on $\Lambda$ which satisfies the following conditions:
\begin{itemize}
\item
the ordered set $(\Lambda, \leq ')$ is isomorphic to $\ZM_{\geq 0}$ (with its standard order);
\item
if $\lambda, \mu \in \Lambda$ and $\lambda \in \coo(\mu)$, then $\lambda <' \mu$.
\end{itemize}
We then define the order $\leq '$ on $\XB$ by gluing these orders on each coset. (Note that this order is not a total order in general: elements in different $\ZM \Phi$-cosets are incomparable.)

We let $\{\nabla^\lambda_0, \, \lambda \in \XB\}$ be the graded exceptional sequence in $D^{\GB \times \Gm}(\tNC)$ which is the $\leq'$-mutation of the sequence of Corollary~\ref{cor:exceptional-sequence}\eqref{it:exceptional-sequence} in the sense of \cite[\S 2.1.4]{bezru-tilting}.
Recall that $\nabla^\lambda_0$ is defined as the unique object (up to a unique isomorphism) such that 
\[
\nabla^\lambda_0 \in  D^{\GB \times \Gm}_{\leq' \lambda}(\tNC) \cap  D^{\GB \times \Gm}_{<' \lambda}(\tNC)^\perp\quad \text{and} \quad \nabla^\lambda_0 \cong \OC_{\tNC}(\lambda) \mod  D^{\GB \times \Gm}_{<' \lambda}(\tNC).
\]
(Here we write $D^{\GB \times \Gm}_{\leq' \lambda}(\tNC)$ for $\bigl( D^{\GB \times \Gm}(\tNC) \bigr)_{\leq' \lambda}$, with the notation of~\S\ref{ss:reminder}. And if $C$ is a subcategory of an additive category $D$, we use the notation $C^\perp$ to denote the full subcategory of $D$ consisting of objects $\FC$ such that $\Hom(\GC,\FC) =0$ for all $\GC$ in $C$.)
We denote by $\{\Delta^\lambda_0, \, \lambda \in \XB\}$ the dual graded exceptional sequence.

Bezrukavnikov's \emph{exotic t-structure} is then defined to be the bounded t-structure on $D^{\GB \times \Gm}(\tNC)$ associated with the graded exceptional sequence $\{\nabla^\lambda_0, \, \lambda \in \XB\}$ (see~\S\ref{ss:reminder}).
The heart of this t-structure will be denoted by $\ES^{\GB\times \Gm}(\tNC)$.
The objects $\nabla^\lambda_0 \langle m \rangle$, resp.~$\Delta^\lambda_0 \langle m \rangle$ for $\lambda \in \XB$ and $m \in \ZM$ will be called \emph{costandard objects}, resp.~\emph{standard objects}. These objects satisfy
\begin{equation}
\label{eqn:Hom-vanishing-DN}
\Hom^n(\Delta^\lambda_0, \nabla^\mu_0 \langle m \rangle) =
\begin{cases}
\FM & \text{if $\lambda=\mu$ and $n=m=0$;} \\
0 & \text{otherwise.}
\end{cases}
\end{equation}

\begin{remark}
In this paper we only consider the exotic t-structure on $D^{\GB \times \Gm}(\tNC)$. A similar construction yields a bounded t-structure on $D^{\GB}(\tNC)$, such that the forgetful functor $D^{\GB \times \Gm}(\tNC) \to D^{\GB}(\tNC)$ is t-exact. The results of Sections~\ref{sec:standard-costandard}--\ref{sec:tilting} have obvious analogues in this context, which we will not state. (See~\cite{bezru-tilting} for the case $p=0$.)
\end{remark}

\section{Further study of standard and costandard objects}
\label{sec:standard-costandard}

\subsection{Description of costandard objects}
\label{ss:costandard}

Following~\cite[\S 2.3]{bezru-tilting} one can give a more concrete description of the exceptional sequence $\{\nabla^\lambda_0, \, \lambda \in \XB\}$, which we explain in this subsection.

Let us fix a simple root $\alpha$, and let $s:=s_\alpha$. We consider the subvariety $Z_s' \subset \tNC \times \tNC$ defined in \cite[\S 1.3]{br}. This subvariety has two irreducible components: the diagonal $\Delta \tNC$, and another which we denote $Y_s$; moreover there exist short exact sequences in $\Db \Coh^{\GB \times \Gm}(\tNC \times \tNC)$ (where $\GB \times \Gm$ acts diagonally on $\tNC \times \tNC$):
\begin{gather}
\OC_{Y_s}(-\rho, \rho-\alpha) \hookrightarrow \OC_{Z_s'} \twoheadrightarrow \OC_{\Delta \tNC}, \label{eqn:ses-tNC-1} \\
\OC_{\Delta \tNC} \langle 2 \rangle \hookrightarrow \OC_{Z_s'}(-\rho, \rho-\alpha) \twoheadrightarrow \OC_{Y_s}(-\rho, \rho-\alpha),\label{eqn:ses-tNC-2}
\end{gather}
where in each sequence the surjection is induced by restriction of functions (see \cite[Lemma 6.1.1]{riche}).
Here $\OC_{Z_s'}(-\rho, \rho-\alpha)$ is the tensor product of $\OC_{Z_s'}$ with the line bundle $\OC_{\tNC}(-\rho) \boxtimes \OC_{\tNC}(\rho-\alpha)$ on $\tNC \times \tNC$, and similarly for $\OC_{Y_s}(-\rho, \rho-\alpha)$. Note that $\OC_{Z_s'}(-\rho, \rho-\alpha) \cong \OC_{Z_s'}(\rho-\alpha, -\rho)$ and similarly for $Y_s$; see~\cite[Lemma~1.5.1 and comments thereafter]{riche}.

By~\cite[Proposition~1.10.3]{br}, the object $\OC_{Z_s'} \langle -1 \rangle$ in $\Db \Coh^{\GB \times \Gm}(\tNC \times \tNC)$ is invertible for the convolution product, with inverse $\OC_{Z_s'}(-\rho, \rho-\alpha) \langle -1 \rangle$. We denote by 
\[
\TM_{\tNC}^s \colon D^{\GB \times \Gm}(\tNC) \to D^{\GB \times \Gm}(\tNC), \quad \text{resp.} \quad \SM^s_{\tNC} \colon D^{\GB \times \Gm}(\tNC) \to D^{\GB \times \Gm}(\tNC),
\]
the Fourier--Mukai transform with kernel $\OC_{Z_s'} \langle -1 \rangle$, resp.~$\OC_{Z_s'}(-\rho, \rho-\alpha) \langle -1 \rangle$, as defined in \cite[\S 1.2]{br}. Then we have
$\TM_{\tNC}^s \circ \SM_{\tNC}^s \cong \SM_{\tNC}^s \circ \TM_{\tNC}^s \cong \id$.
(See Appendix~\ref{sec:appendix} for details on definitions of usual functors in the equivariant setting.)

For $\lambda \in \XB$, we denote by $D^{\GB \times \Gm}_{\co(\lambda)}(\tNC)$ the full triangulated subcategory of the category $D^{\GB \times \Gm}(\tNC)$ generated by the line bundles $\OC_{\tNC}(\mu) \langle m \rangle$ for $\mu \in \co(\lambda)$ and $m \in \ZM$. We define the subcategory $D^{\GB \times \Gm}_{\coo(\lambda)}(\tNC)$ in a similar way. The following lemma can be proved as in~\cite[Lemma 1.11.3]{br} (or can be deduced from the latter result); details are left to the reader. (See also~\cite[Lemma 7]{bezru-tilting} for a similar statement.)

\begin{lem}
\label{lem:TM-SM-line-bundles}
\begin{enumerate}
\item
\label{it:T-S-line-bundles-2}
If $\lambda \in \XB$ and $\lambda=s\lambda$ then
\[
\TM_{\tNC}^s(\OC_{\tNC}(\lambda)) \cong \OC_{\tNC}(s\lambda) \langle -1 \rangle \quad \text{and} \quad \SM_{\tNC}^s(\OC_{\tNC}(\lambda)) \cong \OC_{\tNC}(s\lambda) \langle 1 \rangle.
\]
\item
\label{it:T-S-line-bundles-3}
If $\lambda \in \XB$ and $\lambda \prec s\lambda$, then 
\[
\TM_{\tNC}^s(\OC_{\tNC}(\lambda)) \cong \OC_{\tNC}(s\lambda) \langle 1 \rangle \mod D^{\GB \times \Gm}_{\coo(\lambda)}(\tNC).
\]
\item
\label{it:T-S-line-bundles-4}
If $\lambda \in \XB$ and $s\lambda \prec \lambda$ then 
\[
\SM_{\tNC}^s(\OC_{\tNC}(\lambda)) \cong \OC_{\tNC}(s\lambda) \langle -1 \rangle \mod D^{\GB \times \Gm}_{\coo(\lambda)}(\tNC).
\]
\item
\label{it:T-S-line-bundles-1}
For $\lambda \in \XB$, the functors $\TM_{\tNC}^s$ and $\SM^s_{\tNC}$ stabilize the subcategories $D^{\GB \times \Gm}_{\coo(\lambda)}(\tNC)$ and $D^{\GB \times \Gm}_{\co(\lambda)}(\tNC)$.\qed
\end{enumerate}
\end{lem}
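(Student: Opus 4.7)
The plan is to apply $\TM^s_{\tNC}$ (resp.~$\SM^s_{\tNC}$) to the line bundle $\OC_{\tNC}(\lambda)$ directly, using the short exact sequence~\eqref{eqn:ses-tNC-1} (resp.~\eqref{eqn:ses-tNC-2}) in $\Coh^{\GB \times \Gm}(\tNC \times \tNC)$. Tensoring~\eqref{eqn:ses-tNC-1} with $p_1^* \OC_{\tNC}(\lambda)$ and applying $R p_{2\,*}(-)\langle -1\rangle$ yields a distinguished triangle
\[
A \longto \TM_{\tNC}^s(\OC_{\tNC}(\lambda)) \longto \OC_{\tNC}(\lambda)\langle -1 \rangle \xrightarrow{[1]},
\]
where the third term comes from the diagonal quotient and
\[
A \,=\, R(q_2)_*\bigl(q_1^* \OC_{\tNC}(\lambda - \rho) \otimes q_2^* \OC_{\tNC}(\rho - \alpha)\bigr)\langle -1 \rangle,
\]
with $q_1, q_2 \colon Y_s \rightrightarrows \tNC$ the two projections. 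An entirely analogous triangle expresses $\SM^s_{\tNC}(\OC_{\tNC}(\lambda))$.

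Next, I would use the geometric description of $Y_s$ given in~\cite{riche, br} to identify $A$ more explicitly. The key point is that the projections $q_1, q_2$ factor through a $\PM^1$-bundle structure, so that by the projection formula and base change, $A$ can be computed from a Borel--Weil calculation on the $\PM^1$-fibers of $\Flag \to \GB/P_s$; the line bundles $\OC_{\Flag}(\lambda - \rho)$ and $\OC_{\Flag}(\rho - \alpha)$ restrict to each fiber as $\OC_{\PM^1}(n-1)$ and $\OC_{\PM^1}(-1)$ respectively, where $n := \langle \lambda, \alpha^\vee \rangle$.

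The three cases of the lemma then split according to the sign of $n$. When $n = 0$ (so $s\lambda = \lambda$), the vanishing $H^*(\PM^1, \OC(-1)) = 0$ forces $A = 0$, and~\eqref{it:T-S-line-bundles-2} follows at once. When $n < 0$ (case~\eqref{it:T-S-line-bundles-3}, $\lambda \prec s\lambda$), $A$ is nonzero; its ``leading'' contribution in the filtration by $\preceq$ is $\OC_{\tNC}(s\lambda)\langle 1 \rangle$, and one checks that modulo $D^{\GB \times \Gm}_{\coo(\lambda)}(\tNC)$ the extension with $\OC_{\tNC}(\lambda)\langle -1\rangle$ collapses to $\OC_{\tNC}(s\lambda)\langle 1 \rangle$ as claimed. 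Case~\eqref{it:T-S-line-bundles-4} is proved symmetrically using $\SM^s_{\tNC}$ and~\eqref{eqn:ses-tNC-2}, or alternatively deduced from~\eqref{it:T-S-line-bundles-3} applied to $s\lambda$ via the invertibility $\TM^s_{\tNC} \circ \SM^s_{\tNC} \cong \id$. Part~\eqref{it:T-S-line-bundles-1} follows from the same analysis applied to a general line bundle $\OC_{\tNC}(\mu)$: the Borel--Weil step produces only line bundles whose weights remain inside $\co(\mu)$ (resp.~$\coo(\mu)$), so both functors preserve $D^{\GB \times \Gm}_{\co(\lambda)}(\tNC)$ and $D^{\GB \times \Gm}_{\coo(\lambda)}(\tNC)$.

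The hard part will be the careful bookkeeping of the twists by $-\rho$ and $\rho - \alpha$ appearing in~\eqref{eqn:ses-tNC-1}--\eqref{eqn:ses-tNC-2}, and in particular verifying in case~\eqref{it:T-S-line-bundles-3} that the diagonal contribution $\OC_{\tNC}(\lambda)\langle -1\rangle$ and the top-degree part of $A$ do not both survive modulo $\coo(\lambda)$ as independent $W\lambda$-classes, but combine into the single object $\OC_{\tNC}(s\lambda)\langle 1\rangle$ in the quotient. This is precisely the analysis carried out in~\cite[Lemma~1.11.3]{br} on the Grothendieck--Springer resolution, so in practice one can either reproduce that argument in the present setting or, as suggested in the statement of the lemma, deduce the result by restriction from the Grothendieck--Springer case.
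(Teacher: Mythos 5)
The paper does not actually write out a proof of this lemma: it just says it ``can be proved as in~\cite[Lemma 1.11.3]{br} (or can be deduced from the latter result); details are left to the reader.'' Your proposal is therefore really a sketch of the argument of~\cite[Lemma~1.11.3]{br} transported to $\tNC$, which is exactly what the paper is suggesting, so the overall strategy matches. The two distinguished triangles you extract from~\eqref{eqn:ses-tNC-1} and~\eqref{eqn:ses-tNC-2} are correct, the split into three cases by the sign of $n=\langle\lambda,\alpha^\vee\rangle$ is the right organizing principle, and deducing~\eqref{it:T-S-line-bundles-4} from~\eqref{it:T-S-line-bundles-3} via $\TM^s_\tNC\circ\SM^s_\tNC\cong\id$ together with~\eqref{it:T-S-line-bundles-1} is a legitimate shortcut.

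That said, a few details of your sketch are off and would need to be cleaned up if you actually wrote this out. First, the line bundle $\OC_{\tNC}(\rho-\alpha)$ enters the kernel on the $q_2$-side, so $q_2^*\OC_\tNC(\rho-\alpha)$ is trivial along $q_2$-fibers; it should be pulled out by the projection formula as an external twist of the answer, not fed into the ``Borel--Weil on $\PM^1$'' computation alongside $\OC_\Flag(\lambda-\rho)$. Only $\langle\lambda-\rho,\alpha^\vee\rangle=n-1$ controls the fiberwise cohomology. Second, $q_2\colon Y_s\to\tNC$ is not itself a $\PM^1$-bundle (it is generically one-to-one), so ``base change to the $\PM^1$-fibers of $\Flag\to\GB/\PB_s$'' needs to be routed through the actual geometric description of $Y_s$ in~\cite[\S 1.3--1.5]{br}; this is precisely where the careful analysis of~\cite[Lemma~1.11.3]{br} lives, and you are right to flag it as ``the hard part.'' Third, in the last paragraph the parenthetical claim that the weights produced ``remain inside $\coo(\mu)$'' is not literally true (the endpoints $\mu$ and $s\mu$ are not in $\coo(\mu)$); what one needs, and what is actually true, is that if $\mu\in\coo(\lambda)$ then all of $\co(\mu)$ lies in $\coo(\lambda)$, since an extreme point of $\co(\lambda)$ lying in the sub-polytope $\co(\mu)$ would have to be an extreme point of $\co(\mu)$, i.e.\ lie in $W\mu$, contradicting $\mu\notin W\lambda$. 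Finally, in case~\eqref{it:T-S-line-bundles-3} the claim that the ``extension collapses'' modulo $\coo(\lambda)$ requires showing that, after passing to the Verdier quotient, the connecting map $\OC_{\tNC}(\lambda)\langle-1\rangle\to A[1]$ kills exactly the extreme constituent attached to $\lambda$ and leaves $\OC_{\tNC}(s\lambda)\langle 1\rangle$; this is the bookkeeping you correctly identify as delicate, and it is exactly where one should either reproduce the explicit computation of~\cite[Lemma~1.11.3]{br} or restrict that statement along $\Lder i^*$ from $\tgg$.
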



By \cite[Proposition~3]{bezru-tilting}, the inclusion $D^{\GB \times \Gm}_{\coo(\lambda)}(\tNC)^\bot \to D^{\GB \times \Gm}(\tNC)$ admits a left adjoint
\[
\Pi^l_\lambda \colon D^{\GB \times \Gm}(\tNC) \to D^{\GB \times \Gm}_{\coo(\lambda)}(\tNC)^\bot.
\] 
More concretely (see e.g.~\cite[Lemma~3.1]{bondal}), for any $X$ in $D^{\GB \times \Gm}(\tNC)$ there exist unique objects $Y$ in $D^{\GB \times \Gm}_{\coo(\lambda)}(\tNC)$ and $Z$ in $D^{\GB \times \Gm}_{\coo(\lambda)}(\tNC)^\perp$ and a unique distinguished triangle
$Y \to X \to Z \xrightarrow{[1]}$
(up to unique isomorphisms); then $Z=\Pi^l_\lambda(X)$.

For $\lambda \in \XB$, we set $(\nabla_0')^\lambda := \Pi^l_\lambda(\OC_{\tNC}(\lambda))$. (This notation will be used only in this subsection.)

\begin{lem}
\label{lem:SM-Pi-O}
Let $\lambda \in \XB$.
\begin{enumerate}
\item
\label{it:S-nabla-1}
If $s\lambda=\lambda$, then $\SM_{\tNC}^s((\nabla_0')^\lambda) \cong (\nabla_0')^{s\lambda} \langle 1 \rangle$.
\item
\label{it:S-nabla-2}
If $s\lambda \prec \lambda$, then $\SM_{\tNC}^s((\nabla_0')^\lambda) \cong (\nabla_0')^{s\lambda} \langle -1 \rangle$.
\end{enumerate}
\end{lem}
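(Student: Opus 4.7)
The plan is to prove both parts uniformly by applying the equivalence $\SM^s_{\tNC}$ to the defining distinguished triangle of $(\nabla_0')^\lambda$ and using Lemma~\ref{lem:TM-SM-line-bundles}. First I would set $\epsilon = 1$ in case~\eqref{it:S-nabla-1} and $\epsilon = -1$ in case~\eqref{it:S-nabla-2}, and observe two preliminary facts. Since $\co(\lambda)$ depends only on the $W$-orbit of $\lambda$, we have $\coo(s\lambda) = \coo(\lambda)$; in particular the subcategory $D^{\GB\times\Gm}_{\coo(\lambda)}(\tNC)$ and its perpendicular coincide with those for $s\lambda$, so $\Pi^l_{s\lambda} = \Pi^l_\lambda$. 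Second, by Lemma~\ref{lem:TM-SM-line-bundles}\eqref{it:T-S-line-bundles-2}--\eqref{it:T-S-line-bundles-4}, there is a distinguished triangle
\[
A \to \SM^s_{\tNC}(\OC_{\tNC}(\lambda)) \to \OC_{\tNC}(s\lambda)\langle \epsilon \rangle \xrightarrow{[1]}
\]
with $A \in D^{\GB \times \Gm}_{\coo(\lambda)}(\tNC)$ (in case~\eqref{it:S-nabla-1} one may in fact take $A=0$).

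Next, I would start from the canonical triangle
$Y \to \OC_{\tNC}(\lambda) \to (\nabla_0')^\lambda \xrightarrow{[1]}$
with $Y \in D^{\GB \times \Gm}_{\coo(\lambda)}(\tNC)$ and $(\nabla_0')^\lambda \in D^{\GB \times \Gm}_{\coo(\lambda)}(\tNC)^\perp$ (which exists by the general construction of $\Pi^l_\lambda$), and apply $\SM^s_{\tNC}$ to it. By Lemma~\ref{lem:TM-SM-line-bundles}\eqref{it:T-S-line-bundles-1}, $\SM^s_{\tNC}(Y)$ lies in $D^{\GB \times \Gm}_{\coo(\lambda)}(\tNC)$. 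The key point is to show that $\SM^s_{\tNC}\bigl((\nabla_0')^\lambda\bigr)$ lies in $D^{\GB \times \Gm}_{\coo(\lambda)}(\tNC)^\perp$: since $\SM^s_{\tNC}$ and $\TM^s_{\tNC}$ are mutually inverse equivalences, $\SM^s_{\tNC}$ is right adjoint to $\TM^s_{\tNC}$, so for any $Z \in D^{\GB \times \Gm}_{\coo(\lambda)}(\tNC)$ we have $\Hom(Z, \SM^s_{\tNC}((\nabla_0')^\lambda)) \cong \Hom(\TM^s_{\tNC}(Z), (\nabla_0')^\lambda)$, which vanishes because $\TM^s_{\tNC}(Z) \in D^{\GB \times \Gm}_{\coo(\lambda)}(\tNC)$ by Lemma~\ref{lem:TM-SM-line-bundles}\eqref{it:T-S-line-bundles-1} again.

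From the uniqueness of the decomposition defining $\Pi^l_\lambda$, we then obtain
$\SM^s_{\tNC}\bigl((\nabla_0')^\lambda\bigr) \cong \Pi^l_\lambda\bigl(\SM^s_{\tNC}(\OC_{\tNC}(\lambda))\bigr)$.
Now I would apply $\Pi^l_\lambda$ to the triangle involving $A$: since $A \in D^{\GB \times \Gm}_{\coo(\lambda)}(\tNC)$, the functor $\Pi^l_\lambda$ kills $A$, so $\Pi^l_\lambda(\SM^s_{\tNC}(\OC_{\tNC}(\lambda))) \cong \Pi^l_\lambda(\OC_{\tNC}(s\lambda)\langle\epsilon\rangle) = (\nabla_0')^{s\lambda}\langle\epsilon\rangle$, using $\Pi^l_{s\lambda} = \Pi^l_\lambda$ and the commutativity of $\Pi^l_\lambda$ with the grading shift. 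Combining yields the desired isomorphism in both cases.

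The main obstacle will be the stability of the perpendicular category $D^{\GB\times\Gm}_{\coo(\lambda)}(\tNC)^\perp$ under $\SM^s_{\tNC}$; once this is established via the biadjunction, everything else reduces to bookkeeping with triangles and the equality $\coo(\lambda) = \coo(s\lambda)$.
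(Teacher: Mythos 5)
Your argument is essentially the paper's argument, repackaged slightly. Both proofs rest on the same two pillars: (i) $\SM^s_{\tNC}((\nabla_0')^\lambda)$ lies in $D^{\GB\times\Gm}_{\coo(\lambda)}(\tNC)^\perp$, proved by the adjunction $\Hom(X,\SM^s(-))\cong\Hom(\TM^s(X),-)$ together with the stability in Lemma~\ref{lem:TM-SM-line-bundles}\eqref{it:T-S-line-bundles-1}; and (ii) the image of $\SM^s_{\tNC}((\nabla_0')^\lambda)$ in the Verdier quotient by $D^{\GB\times\Gm}_{\coo(\lambda)}(\tNC)$ is $\OC_{\tNC}(s\lambda)\langle\epsilon\rangle$. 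The paper then invokes the equivalence $D^{\GB\times\Gm}_{\coo(\lambda)}(\tNC)^\perp\simeq D^{\GB\times\Gm}(\tNC)/D^{\GB\times\Gm}_{\coo(\lambda)}(\tNC)$ and checks (ii) by composing two ``mod $D_{\coo(\lambda)}$'' isomorphisms, while you reach the same conclusion by applying $\SM^s_{\tNC}$ to the defining triangle $Y\to\OC_{\tNC}(\lambda)\to(\nabla_0')^\lambda\to$ and using uniqueness of the semi-orthogonal decomposition to identify $\SM^s_{\tNC}((\nabla_0')^\lambda)$ with $\Pi^l_\lambda(\SM^s_{\tNC}(\OC_{\tNC}(\lambda)))$. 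These are interchangeable formulations of the same idea.

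One small imprecision: in case~\eqref{it:S-nabla-2}, Lemma~\ref{lem:TM-SM-line-bundles}\eqref{it:T-S-line-bundles-4} only states an isomorphism $\SM^s_{\tNC}(\OC_{\tNC}(\lambda))\cong\OC_{\tNC}(s\lambda)\langle-1\rangle$ modulo $D^{\GB\times\Gm}_{\coo(\lambda)}(\tNC)$, i.e.\ in the Verdier quotient; this does not by itself produce a distinguished triangle $A\to\SM^s_{\tNC}(\OC_{\tNC}(\lambda))\to\OC_{\tNC}(s\lambda)\langle-1\rangle\to$ with $A$ in the subcategory, as you assert. You do not need such a triangle: since $\Pi^l_\lambda$ factors through the quotient (being the left adjoint to the inclusion of the right orthogonal, it composed with the equivalence above is the quotient functor), the quotient isomorphism already gives $\Pi^l_\lambda(\SM^s_{\tNC}(\OC_{\tNC}(\lambda)))\cong\Pi^l_\lambda(\OC_{\tNC}(s\lambda)\langle-1\rangle)$ directly. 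With this fix the argument is complete and matches the paper.
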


\begin{proof}
We prove~\eqref{it:S-nabla-2}; the proof of~\eqref{it:S-nabla-1} is similar. The arguments are copied from~\cite[p.~344]{bezru-tilting}.

It is easily checked that the composition of natural functors $D^{\GB \times \Gm}_{\coo(\lambda)}(\tNC)^\perp \to D^{\GB \times \Gm}(\tNC) \to D^{\GB \times \Gm}(\tNC) / D^{\GB \times \Gm}_{\coo(\lambda)}(\tNC)$ is an equivalence, see e.g.~\cite[Proposition~1.6]{bonkap}. Therefore, it
suffices to prove the following properties: 
\begin{gather}
\SM_{\tNC}^s((\nabla_0')^\lambda) \cong \OC_{\tNC}(s\lambda) \langle -1 \rangle \mod D^{\GB \times \Gm}_{\coo(\lambda)}(\tNC); \label{eqn:nabla'-1} \\ 
\SM_{\tNC}^s((\nabla_0')^\lambda) \in D^{\GB \times \Gm}_{\coo(\lambda)}(\tNC)^\perp. \label{eqn:nabla'-2}
\end{gather}

By definition, $(\nabla_0')^\lambda \cong \OC_{\tNC}(\lambda) \mod D^{\GB \times \Gm}_{\coo(\lambda)}(\tNC)$.  Applying $\SM_{\tNC}^s$ and using Lemma~\ref{lem:TM-SM-line-bundles}\eqref{it:T-S-line-bundles-1}, we obtain
\[
\SM_{\tNC}^s((\nabla_0')^\lambda) \cong \SM_{\tNC}^s(\OC_{\tNC}(\lambda)) \mod D^{\GB \times \Gm}_{\coo(\lambda)}(\tNC).
\]
Then, using
Lemma~\ref{lem:TM-SM-line-bundles}\eqref{it:T-S-line-bundles-4}, we deduce~\eqref{eqn:nabla'-1}.

For any $X \in D^{\GB \times \Gm}_{\coo(\lambda)}(\tNC)$, we observe that
\[
\Hom(X, \SM_{\tNC}^s((\nabla_0')^\lambda)) \cong \Hom(\TM_{\tNC}^s(X), (\nabla_0')^\lambda )=0,
\]
because, by definition, $(\nabla_0')^\lambda \in D^{\GB \times \Gm}_{\coo(\lambda)}(\tNC)^\perp$ and, by Lemma~\ref{lem:TM-SM-line-bundles}\eqref{it:T-S-line-bundles-1}, $\TM_{\tNC}^s(X) \in D^{\GB \times \Gm}_{\coo(\lambda)}(\tNC)$. This proves~\eqref{eqn:nabla'-2}.
\end{proof}

\begin{prop}
\label{prop:nabla}
Let $\lambda \in \XB$.
\begin{enumerate}
\item 
\label{it:nabla-1}
We have $\nabla^\lambda_0 \cong \Pi^l_\lambda(\OC_{\tNC}(\lambda))$.
\item
\label{it:nabla-2}
If $s\lambda = \lambda$, then $\SM_{\tNC}^s(\nabla_0^\lambda) \cong \nabla_0^{s\lambda} \langle 1 \rangle$.
\item
\label{it:nabla-Ss}
If $s\lambda \prec \lambda$, then $\SM_{\tNC}^s(\nabla_0^\lambda) \cong \nabla_0^{s\lambda} \langle -1 \rangle$.
\end{enumerate}
\end{prop}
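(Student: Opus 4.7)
Statements (2) and (3) follow immediately by combining (1) with Lemma~\ref{lem:SM-Pi-O}, so the plan is to concentrate on (1), namely the isomorphism $\nabla^\lambda_0 \cong \Pi^l_\lambda(\OC_{\tNC}(\lambda))$.

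My approach is to verify directly that the object $\Pi^l_\lambda(\OC_{\tNC}(\lambda))$ satisfies the three defining properties of $\nabla^\lambda_0$: membership in $D^{\GB \times \Gm}_{\leq' \lambda}(\tNC)$, perpendicularity to $D^{\GB \times \Gm}_{<' \lambda}(\tNC)$, and the isomorphism with $\OC_{\tNC}(\lambda)$ modulo $D^{\GB \times \Gm}_{<' \lambda}(\tNC)$. The basic tool is the canonical distinguished triangle
\[
Y \longto \OC_{\tNC}(\lambda) \longto \Pi^l_\lambda(\OC_{\tNC}(\lambda)) \longto Y[1]
\]
with $Y \in D^{\GB \times \Gm}_{\coo(\lambda)}(\tNC)$ furnished by the left adjoint $\Pi^l_\lambda$. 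The defining property of $\leq'$ asserts that if $\mu \in \coo(\lambda)$ then $\mu <' \lambda$, so $D^{\GB \times \Gm}_{\coo(\lambda)}(\tNC) \subset D^{\GB \times \Gm}_{<' \lambda}(\tNC)$. Since a standard argument on exceptional sequences shows $\OC_{\tNC}(\lambda) \in D^{\GB \times \Gm}_{\leq' \lambda}(\tNC)$, the triangle immediately delivers the first and third properties.

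The key step is the perpendicularity, and this is where I expect the main obstacle to lie, because $D^{\GB \times \Gm}_{<' \lambda}(\tNC)$ is in general strictly larger than $D^{\GB \times \Gm}_{\coo(\lambda)}(\tNC)$, so the inclusion $\Pi^l_\lambda(\OC_{\tNC}(\lambda)) \in D^{\GB \times \Gm}_{\coo(\lambda)}(\tNC)^\perp$ does not give it for free. It suffices to show $\Hom^k(\OC_{\tNC}(\mu)\langle m\rangle, \Pi^l_\lambda(\OC_{\tNC}(\lambda))) = 0$ for all $\mu <' \lambda$ and all $k, m \in \ZM$. For $\mu \in \coo(\lambda)$ this is immediate; for $\mu <' \lambda$ outside $\coo(\lambda)$, the plan is to apply $\Hom^k(\OC_{\tNC}(\mu)\langle m\rangle, -)$ to the defining triangle and use Lemma~\ref{lem:line-bundles}\eqref{it:Hom-line-bundles} to force the vanishing of Homs into both $\OC_{\tNC}(\lambda)$ and $Y$. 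A careful case analysis, distinguishing whether $\mu \in W\lambda \setminus \{\lambda\}$ or $\mu \notin \co(\lambda)$ (these being the only possibilities because $\mu <' \lambda$ forces $\mu \in \lambda + \ZM\Phi$), is needed to rule out the relevant comparisons under $\preceq$, using that $\leq'$ refines $\leq$ and that $\leq$ agrees with $\preceq$ on $W$-orbits. An alternative, which I suspect is cleaner, is to argue by induction on $\delta(\lambda)$: the base case $\delta(\lambda) = 0$ reduces to the observation that for dominant $\lambda$ the line bundle $\OC_{\tNC}(\lambda)$ already lies in $D^{\GB \times \Gm}_{\coo(\lambda)}(\tNC)^\perp$ (by Lemma~\ref{lem:line-bundles}\eqref{it:Hom-line-bundles}, since $\coo(\lambda)$ then consists of weights strictly smaller than $\lambda$ under $\preceq$), and the inductive step propagates the identification $\nabla^\mu_0 \cong \Pi^l_\mu(\OC_{\tNC}(\mu))$ from $\mu$ to $\lambda = s_\alpha \mu$ with $\lambda \prec \mu$ by applying $\SM_{\tNC}^{s_\alpha}$ and invoking Lemma~\ref{lem:SM-Pi-O}\eqref{it:S-nabla-2}.
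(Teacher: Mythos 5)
Your reduction of (2) and (3) to (1) via Lemma~\ref{lem:SM-Pi-O} is exactly what the paper does. For (1), your overall strategy (verify that $\Pi^l_\lambda(\OC_{\tNC}(\lambda))$ satisfies the three defining conditions of $\nabla^\lambda_0$) is equivalent to the paper's strategy (show the collection $\{\Pi^l_\lambda(\OC_{\tNC}(\lambda))\}$ is a graded exceptional sequence satisfying the mutation conditions), and your treatment of the first and third conditions via the defining triangle is fine. But your plan for the perpendicularity step has a real gap.

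You propose to get $\Hom^k(\OC_{\tNC}(\mu)\langle m\rangle, \Pi^l_\lambda(\OC_{\tNC}(\lambda))) = 0$ for $\mu <' \lambda$ by applying $\Hom(\OC_{\tNC}(\mu)\langle m\rangle, -)$ to the triangle $Y \to \OC_{\tNC}(\lambda) \to \Pi^l_\lambda(\OC_{\tNC}(\lambda))$ and invoking Lemma~\ref{lem:line-bundles}\eqref{it:Hom-line-bundles}. For the map into $\OC_{\tNC}(\lambda)$ this works, since $\mu <' \lambda$ rules out $\lambda \preceq \mu$ in the relevant cases. But for $Y \in D^{\GB\times\Gm}_{\coo(\lambda)}(\tNC)$ it does \emph{not}: the naive generating set $\{\OC_{\tNC}(\nu),\ \nu \in \coo(\lambda)\}$ of $D^{\GB\times\Gm}_{\coo(\lambda)}(\tNC)$ contains weights $\nu$ with $\nu \preceq \mu$ (e.g.\ if $\mu$ is a ``high'' element of $W\lambda$), so Lemma~\ref{lem:line-bundles}\eqref{it:Hom-line-bundles} does not give $\Hom(\OC_{\tNC}(\mu), Y) = 0$ on its own. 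What is missing is the key refinement from Bondal's explicit construction of $\Pi^l_\lambda$ (\cite[Theorem~3.2(b)]{bondal}): $\Pi^l_\lambda(\OC_{\tNC}(\lambda))$, and hence $Y$, lies in the subcategory generated by $\OC_{\tNC}(\nu)\langle p\rangle$ with $\lambda \preceq \nu$. With this input, the case $\mu \in W\lambda$ closes cleanly ($\nu\preceq\mu$ with $\lambda\preceq\nu$ would force $\lambda\preceq\mu$, contradicting $\mu <' \lambda$ within a $W$-orbit), which is essentially the paper's adjunction argument in disguise. For the case $\mu \notin \co(\lambda)$, however, even Bondal's refinement leaves a nontrivial combinatorial claim unaddressed; the paper circumvents it entirely with a different tool — an induction on $\ell(w)$ where $\mu = w\mu^+$, using $\SM^s_{\tNC}$ and the fact that $\TM^s_{\tNC}$ preserves $D^{\GB\times\Gm}_{\co(\lambda)}(\tNC)$. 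Your proposal does not mention this second mechanism at all.

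Your alternative induction on $\delta(\lambda)$ is circular. The inductive step uses Lemma~\ref{lem:SM-Pi-O}\eqref{it:S-nabla-2} to turn $(\nabla_0')^{s\lambda} = \nabla^{s\lambda}_0$ into $(\nabla_0')^\lambda = \SM_{\tNC}^s(\nabla^{s\lambda}_0)\langle 1\rangle$; but to identify the right-hand side with $\nabla^\lambda_0\langle 0\rangle$ is precisely part (3) of the proposition you are proving. You would still have to verify independently that $\SM_{\tNC}^s(\nabla^{s\lambda}_0)\langle 1\rangle$ lies in $D^{\GB\times\Gm}_{<'\lambda}(\tNC)^\perp$, which amounts to analyzing $\TM_{\tNC}^s(\OC_{\tNC}(\mu))$ for all $\mu <' \lambda$, and is not provided. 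Moreover the induction on $\delta$ is not aligned with the order $\leq'$ (there are $\mu$ with $\delta(\mu) > \delta(\lambda)$ and $\mu <' \lambda$), so the inductive hypothesis does not control $D^{\GB\times\Gm}_{<'\lambda}(\tNC)$.
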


\begin{proof}
These claims are proved in \cite[Proposition 7]{bezru-tilting} when $p=0$. The arguments are similar in our setting, but for completeness we repeat them below. First we observe that it suffices to prove~\eqref{it:nabla-1}; then~\eqref{it:nabla-2} and~\eqref{it:nabla-Ss} follow from Lemma~\ref{lem:SM-Pi-O}.


We wish to show that $(\nabla_0')^\lambda = \nabla_0^\lambda$.
To do so, it suffices to prove the following properties:
\begin{gather} 
(\nabla_0')^\lambda \cong \OC_{\tNC}(\lambda) \mod D^{\GB \times \Gm}_{<' \lambda}(\tNC); \label{eqn:nabla-nabla'-1} \\
\Hom^n ((\nabla_0')^\lambda, (\nabla_0')^\mu \langle m \rangle)= 0 \quad \text{for any $n,m \in \ZM$ and $\lambda <' \mu$}. \label{eqn:nabla-nabla'-2}
\end{gather}

By definition, $(\nabla_0')^\lambda \cong \OC_{\tNC}(\lambda) \mod D^{\GB \times \Gm}_{\coo(\lambda)}(\tNC)$.  As $\nu \in \coo(\lambda)$ implies
that $\nu <' \lambda$ by definition (see~\S\ref{ss:exotic-t-structure}), condition~\eqref{eqn:nabla-nabla'-1} is immediate.
In order to verify~\eqref{eqn:nabla-nabla'-2}, we will prove the stronger property that 
\begin{equation}
\label{eqn:nabla-nabla'-3}
\Hom^n ((\nabla_0')^\lambda, (\nabla_0')^\mu \langle m \rangle)= 0 \quad \text{if $\mu \in W \lambda$ and $\mu \not\preceq \lambda$, or if $\mu \notin \co(\lambda)$}.
\end{equation}

First, assume that $\mu \in W \lambda$ and $\mu \not\preceq \lambda$.  Then $\coo(\lambda)=\coo(\mu)$, hence $(\nabla_0')^\mu \in D^{\GB \times \Gm}_{\coo(\lambda)}(\tNC)^\perp$.  Thus we may apply adjunction:
\[
\Hom^n ((\nabla_0')^\lambda, (\nabla_0')^\mu \langle m \rangle) \cong \Hom^n (\OC_{\tNC}(\lambda), (\nabla_0')^\mu \langle m \rangle).
\]
We claim that $(\nabla_0')^\mu$ belongs to the triangulated subcategory of $ D^{\GB \times \Gm}(\tNC)$ generated by the objects $\OC_{\tNC}(\nu) \langle p \rangle$ with $\mu \preceq \nu$: in fact this follows from the explicit construction of $\Pi_\mu^l$ in~\cite[Theorem~3.2(b)]{bondal} and Lemma~\ref{lem:line-bundles}\eqref{it:Hom-line-bundles}.
Having assumed $\mu \not\preceq \lambda$, we conclude (again by Lemma~\ref{lem:line-bundles}\eqref{it:Hom-line-bundles}) that 
\[
\Hom^n (\OC_{\tNC}(\lambda), (\nabla_0')^\mu \langle m \rangle) = 0,
\]
proving~\eqref{eqn:nabla-nabla'-3} in this case.

Now assume instead that $\mu \notin \co(\lambda)$.  Write $\mu = w \mu^+$, where $w\in W$ and $\mu^+ \in \XB^+$.  As $(\nabla_0')^\lambda \in D^{\GB \times \Gm}_{\co(\lambda)}(\tNC)$ by construction, it suffices to show that $(\nabla_0')^\mu \in D^{\GB \times \Gm}_{\co(\lambda)}(\tNC)^\perp$ under our assumption.  We proceed by induction on $\ell(w)$.

If $\ell(w)=0$, i.e.~$\mu \in \XB^+$, then by Lemma~\ref{lem:line-bundles}\eqref{it:Hom-line-bundles} we have $\OC_{\tNC}(\mu) \in D^{\GB \times \Gm}_{\coo(\mu)}(\tNC)^\perp$ and so $(\nabla_0')^\mu = \OC_{\tNC}(\mu)$. On the other hand we also have $\OC_{\tNC}(\mu) \in D^{\GB \times \Gm}_{\co(\lambda)}(\tNC)^{\perp}$ by the same Lemma~\ref{lem:line-bundles}\eqref{it:Hom-line-bundles}.

Now assume that $(\nabla_0')^{\mu'} \in D^{\GB \times \Gm}_{\co(\lambda)}(\tNC)^\perp$ and $\mu=s\mu' \prec \mu'$, where $s$ is a finite simple reflection. Then $(\nabla_0')^{\mu} = \SM_{\tNC}^s ((\nabla_0')^{\mu'}) \langle 1 \rangle$ by Lemma~\ref{lem:SM-Pi-O}\eqref{it:S-nabla-2}.  But $\SM_{\tNC}^s$ preserves the category $D^{\GB \times \Gm}_{\co(\lambda)}(\tNC)^\perp$, as $\SM_{\tNC}^s$ is right adjoint to $\TM_{\tNC}^s$, which preserves $D^{\GB \times \Gm}_{\co(\lambda)}(\tNC)$ by Lemma~\ref{lem:TM-SM-line-bundles}\eqref{it:T-S-line-bundles-1}.
\end{proof}

\begin{cor}
\label{cor:nabla-dominant}
If $\lambda \in \XB^+$ then $\nabla_0^\lambda=\OC_{\tNC}(\lambda)$.
\end{cor}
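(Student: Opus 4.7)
The plan is to reduce the corollary to a Hom-vanishing statement between line bundles, and then to an elementary comparison of partial orders on $\XB$. By Proposition~\ref{prop:nabla}\eqref{it:nabla-1} we have $\nabla_0^\lambda \cong \Pi^l_\lambda(\OC_{\tNC}(\lambda))$, where $\Pi^l_\lambda$ is left adjoint to the inclusion $D^{\GB \times \Gm}_{\coo(\lambda)}(\tNC)^\perp \hookrightarrow D^{\GB \times \Gm}(\tNC)$. Since $\Pi^l_\lambda$ acts as the identity on objects already in this right-orthogonal subcategory, it is enough to show that
\[
\OC_{\tNC}(\lambda) \in D^{\GB \times \Gm}_{\coo(\lambda)}(\tNC)^\perp.
\]

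By the definition of $D^{\GB \times \Gm}_{\coo(\lambda)}(\tNC)$ as the triangulated subcategory generated by $\OC_{\tNC}(\mu) \langle m \rangle$ for $\mu \in \coo(\lambda)$ and $m \in \ZM$, this membership reduces to verifying that
\[
\Hom^n_{D^{\GB \times \Gm}(\tNC)}(\OC_{\tNC}(\mu), \OC_{\tNC}(\lambda) \langle m \rangle) = 0
\]
for all $\mu \in \coo(\lambda)$ and all $n,m \in \ZM$. By Lemma~\ref{lem:line-bundles}\eqref{it:Hom-line-bundles}, this Hom-space vanishes unless $\lambda \preceq \mu$, so the task reduces to showing that for $\lambda \in \XB^+$ no $\mu \in \coo(\lambda)$ satisfies $\lambda \preceq \mu$.

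The key observation, which is the only non-formal point, is that for $\lambda \in \XB^+$ every $\mu \in \co(\lambda)$ already satisfies $\mu \preceq \lambda$. Indeed, any $\mu$ in the real convex hull of $W\lambda$ satisfies $\lambda - \mu \in \sum_{\alpha \in \simpleroots} \RM_{\geq 0} \, \alpha$ (a standard property of convex hulls of dominant Weyl orbits), and the additional condition $\mu \in \lambda + \ZM \Phi$ forces the coefficients to be non-negative integers, so $\lambda - \mu \in \ZM_{\geq 0} \Phi^+$, i.e.\ $\mu \preceq \lambda$. Combining this with the hypothetical inequality $\lambda \preceq \mu$ would give $\mu = \lambda$; but $\lambda \in W\lambda$, so $\lambda \notin \coo(\lambda)$, a contradiction. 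Hence the Hom-vanishing holds, completing the proof. I do not expect any serious obstacle here: the only care needed is to cite (or briefly verify) the standard convex-hull fact under the paper's sign conventions for $\Phi^+$ and $\preceq$.
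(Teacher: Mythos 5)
Your proof is correct and follows essentially the same route as the paper: the paper's proof of this corollary simply points back to the case $\ell(w)=0$ in the proof of Proposition~\ref{prop:nabla}, where it is observed (via Lemma~\ref{lem:line-bundles}\eqref{it:Hom-line-bundles}) that $\OC_{\tNC}(\lambda) \in D^{\GB \times \Gm}_{\coo(\lambda)}(\tNC)^\perp$ for $\lambda$ dominant, hence $\Pi^l_\lambda(\OC_{\tNC}(\lambda)) = \OC_{\tNC}(\lambda)$. You have merely unwound that citation and filled in the (correct) convex-hull observation that for dominant $\lambda$ every $\mu \in \co(\lambda)$ satisfies $\mu \preceq \lambda$, which the paper leaves implicit.
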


\begin{proof}
We have observed in the course of the proof of Proposition~\ref{prop:nabla} that $(\nabla'_0)^\lambda \cong \OC_{\tNC}(\lambda)$. Since $(\nabla'_0)^\lambda \cong \nabla^\lambda_0$ by Proposition~\ref{prop:nabla}\eqref{it:nabla-1}, the corollary follows.
\end{proof}

\begin{remark}
Proposition~\ref{prop:nabla} and Corollary~\ref{cor:nabla-dominant} entirely determine the objects $\nabla_0^\lambda$, for all $\lambda \in \XB$. This description implies in particular that these objects do not depend on the choice of the order $\leq'$. Then Lemma~\ref{lem:standard-Hom-gr} shows that the objects $\Delta^0_\lambda$ do not depend on this choice either.
\end{remark}

\subsection{Description of standard objects}
\label{ss:standard}

One can give a description of standard objects which is similar to the description of costandard objects given in~\S\ref{ss:costandard}. This description is suggested in~\cite{bezru-tilting}, though it is not stated explicitly.
As in~\S\ref{ss:costandard} we fix a simple root $\alpha$, and set $s:=s_\alpha$.

\begin{prop}
\label{prop:Delta}
Let $\lambda \in \XB$.
\begin{enumerate}
\item
\label{it:Delta-Ss}
If $\lambda \preceq s \lambda$, then $\SM_{\tNC}^s(\Delta^\lambda_0) \cong \Delta^{s\lambda}_0 \langle 1 \rangle$.
\item
\label{it:Delta-antidom}
If $\lambda \in -\XB^+$ then $\Delta^\lambda_0 \cong \OC_{\tNC}(\lambda)$.
\end{enumerate}
\end{prop}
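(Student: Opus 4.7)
For part~\eqref{it:Delta-Ss}, I apply Lemma~\ref{lem:standard-Hom-gr}: it suffices to verify that $\Hom^n(\SM^s_{\tNC}(\Delta^\lambda_0), \nabla^\nu_0 \langle m \rangle)$ equals $\FM$ for $(\nu,n,m) = (s\lambda, 0, 1)$ and vanishes otherwise. Since $\SM^s_{\tNC}$ and $\TM^s_{\tNC}$ are mutually inverse (hence each is both left and right adjoint to the other), this Hom equals $\Hom^n(\Delta^\lambda_0, \TM^s_{\tNC}(\nabla^\nu_0) \langle m \rangle)$. I proceed by case analysis on $\nu$ versus $s\nu$: if $s\nu = \nu$, Proposition~\ref{prop:nabla}\eqref{it:nabla-2} inverted gives $\TM^s_{\tNC}(\nabla^\nu_0) \cong \nabla^\nu_0 \langle -1 \rangle$, contributing the desired nonzero Hom precisely when $\nu = \lambda = s\lambda$; if $\nu \prec s\nu$, applying Proposition~\ref{prop:nabla}\eqref{it:nabla-Ss} to $s\nu$ (noting $s(s\nu) = \nu \prec s\nu$) and inverting gives $\TM^s_{\tNC}(\nabla^\nu_0) \cong \nabla^{s\nu}_0 \langle 1 \rangle$, but any nonzero contribution would require $s\lambda \prec \lambda$, contradicting $\lambda \preceq s\lambda$.

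The case $s\nu \prec \nu$ has no direct formula for $\TM^s_{\tNC}(\nabla^\nu_0)$; I handle it via the distinguished triangles obtained by Fourier--Mukai transform of~\eqref{eqn:ses-tNC-1} and~\eqref{eqn:ses-tNC-2} (both shifted by $\langle -1 \rangle$):
\[
Y' \star \nabla^\nu_0 \to \TM^s_{\tNC}(\nabla^\nu_0) \to \nabla^\nu_0 \langle -1 \rangle \triright \quad \text{and} \quad \nabla^\nu_0 \langle 1 \rangle \to \SM^s_{\tNC}(\nabla^\nu_0) \to Y' \star \nabla^\nu_0 \triright,
\]
where $Y' \star -$ denotes Fourier--Mukai with kernel $\OC_{Y_s}(-\rho, \rho - \alpha)\langle -1 \rangle$. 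Proposition~\ref{prop:nabla}\eqref{it:nabla-Ss} gives $\SM^s_{\tNC}(\nabla^\nu_0) \cong \nabla^{s\nu}_0 \langle -1 \rangle$, so the second triangle identifies $Y' \star \nabla^\nu_0$ as the cone of $\nabla^\nu_0 \langle 1 \rangle \to \nabla^{s\nu}_0 \langle -1 \rangle$. The hypothesis $\lambda \preceq s\lambda$ forbids $\nu = \lambda$ when $s\nu \prec \nu$, so in the long exact sequences obtained by applying $\Hom^\bullet(\Delta^\lambda_0, - \langle m \rangle)$, all terms involving $\nabla^\nu_0$ vanish by~\eqref{eqn:Hom-vanishing-DN}; the surviving contribution yields $\Hom^n(\Delta^\lambda_0, \TM^s_{\tNC}(\nabla^\nu_0) \langle m \rangle) \cong \Hom^n(\Delta^\lambda_0, \nabla^{s\nu}_0 \langle m-1 \rangle)$, which is nonzero exactly at $(\nu, n, m) = (s\lambda, 0, 1)$ (consistent with $\lambda \prec s\lambda$).

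For part~\eqref{it:Delta-antidom}, I verify via Lemma~\ref{lem:standard-Hom-gr} that $\OC_{\tNC}(\lambda)$ satisfies the Hom characterization of $\Delta^\lambda_0$. When $\mu = \lambda$, the adjunction between $\Pi^l_\lambda$ and the inclusion identifies $\Hom^n(\OC_{\tNC}(\lambda), \nabla^\lambda_0 \langle m \rangle)$ with $\Hom^n(\nabla^\lambda_0, \nabla^\lambda_0 \langle m \rangle)$, giving the desired profile by the exceptional sequence property. For $\mu \neq \lambda$, I use that $\nabla^\mu_0$ lies in the triangulated subcategory generated by the line bundles $\OC_{\tNC}(\xi) \langle p \rangle$ with $\mu \preceq \xi$ (established in the proof of Proposition~\ref{prop:nabla}); by Lemma~\ref{lem:line-bundles}\eqref{it:Hom-line-bundles}, any nonzero Hom from $\OC_{\tNC}(\lambda)$ requires such a $\xi$ with $\xi \preceq \lambda$, so by transitivity $\mu \preceq \lambda$. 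For antidominant $\lambda$, this eliminates all $\mu$ with $\mu \not\preceq \lambda$ (in particular $\mu \in W\lambda \setminus \{\lambda\}$ and $\mu \in \coo(\lambda)$, for which $\mu \succ \lambda$). The main obstacle is the remaining case $\mu \prec \lambda$, where $\xi = \lambda$ is a valid intermediate term that contributes individually to the building blocks of $\nabla^\mu_0$; I resolve it by an inductive argument on the $\preceq$-depth of $\mu$ below $\lambda$, exploiting the recursive structure of $\nabla^\mu_0 = \Pi^l_\mu(\OC_{\tNC}(\mu))$ via Bondal's construction and the cancellations it produces among the constituent line bundles.
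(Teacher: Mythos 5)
Your treatment of part~\eqref{it:Delta-Ss} is correct and is essentially the paper's argument: the paper also applies Lemma~\ref{lem:standard-Hom-gr}, passes via adjunction to $\TM^s_{\tNC}(\nabla^\mu_0)$, and splits into the three cases ($\mu=s\mu$, $\mu\prec s\mu$, $s\mu\prec\mu$) using Proposition~\ref{prop:nabla}\eqref{it:nabla-2}--\eqref{it:nabla-Ss} and the two triangles coming from \eqref{eqn:ses-tNC-1}--\eqref{eqn:ses-tNC-2}. You merely reorganize the case analysis to run over the costandard index rather than treating $\mu=s\lambda$ and $\mu\neq s\lambda$ separately; the content is the same.

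In part~\eqref{it:Delta-antidom} there is a genuine gap. You choose to verify the \emph{full} Hom profile of Lemma~\ref{lem:standard-Hom-gr} (vanishing for all $\mu\neq\lambda$), whereas the paper uses only the defining condition of the dual exceptional sequence: one checks $\OC_{\tNC}(\lambda)\cong\nabla^\lambda_0\ \mathrm{mod}\ D^{\GB\times\Gm}_{<'\lambda}(\tNC)$ (automatic from the construction of $\nabla^\lambda_0$) and that $\Hom^n(\OC_{\tNC}(\lambda),\nabla^\mu_0\langle m\rangle)=0$ only for $\mu<'\lambda$. With your stronger target, the case $\mu\prec\lambda$ (same coset, $\mu\neq\lambda$) is not eliminated by the generation-and-transitivity step, and your proposed resolution --- ``an inductive argument on the $\preceq$-depth \ldots exploiting the recursive structure \ldots and the cancellations it produces among the constituent line bundles'' --- is not an argument; it names neither the inductive hypothesis nor the mechanism by which the individually nonzero $\Hom$'s to $\OC_{\tNC}(\xi)\langle p\rangle$ would cancel after assembly into $\nabla^\mu_0$.

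The case does have a clean resolution that you should use instead. If $\mu\prec\lambda$ with $\lambda\in-\XB^+$ and $\mu\in\lambda+\ZM\Phi$, then $\dom(\lambda)\preceq\dom(\mu)$, so $\lambda\in\co(\mu)$; and $\lambda\notin W\mu$, since otherwise $\lambda$ would be the antidominant (hence $\preceq$-minimal) element of $W\mu$, forcing $\lambda\preceq\mu$ and therefore $\lambda=\mu$. Thus $\lambda\in\coo(\mu)$, so $\OC_{\tNC}(\lambda)\in D^{\GB\times\Gm}_{\coo(\mu)}(\tNC)$, and the vanishing follows immediately from $\nabla^\mu_0=\Pi^l_\mu(\OC_{\tNC}(\mu))\in D^{\GB\times\Gm}_{\coo(\mu)}(\tNC)^\perp$. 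Alternatively, follow the paper and reduce to $\mu<'\lambda$ from the start: then the same observation $\lambda\in\co(\mu)$ gives either $\lambda\in W\mu$ (so $\lambda\leq'\mu$) or $\lambda\in\coo(\mu)$ (so $\lambda<'\mu$), contradicting $\mu<'\lambda$ in either case, and the $\mu\prec\lambda$ issue never arises.
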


\begin{proof}
\eqref{it:Delta-Ss} For $m,n \in \ZM$ we have
\[
\Hom^n \bigl( \SM_{\tNC}^s(\Delta^\lambda_0), \nabla^{s\lambda}_0 \langle m \rangle \bigr) \cong \Hom^n \bigl( \Delta^\lambda_0, \TM_{\tNC}^s(\nabla^{s\lambda}_0) \langle m \rangle \bigr).
\]
If $\lambda=s\lambda$, then by Proposition~\ref{prop:nabla}(2) we have $ \TM_{\tNC}^s(\nabla^{s\lambda}_0) \cong \nabla^\lambda_0 \langle -1 \rangle$, and we deduce that
\begin{equation}
\label{eqn:Delta-S-1}
\Hom^n \bigl( \SM_{\tNC}^s(\Delta^\lambda_0), \nabla^{s\lambda}_0 \langle m \rangle \bigr) \cong 
\begin{cases}
\FM & \text{if $n=0$ and $m=1$;}\\
0 & \text{otherwise.}
\end{cases}
\end{equation}

Now we assume that $\lambda \prec s\lambda$.
Using exact sequences~\eqref{eqn:ses-tNC-1} and~\eqref{eqn:ses-tNC-2} we obtain distinguished triangles
\[
\FC \to \TM^s_{\tNC}(\nabla^{s\lambda}_0) \to \nabla^{s\lambda}_0 \langle -1 \rangle \xrightarrow{[1]} \quad \text{and} \quad \nabla^{s\lambda}_0 \langle 1 \rangle \to \SM^s_{\tNC}(\nabla^{s\lambda}_0) \to \FC \xrightarrow{[1]}
\]
for some object $\FC$ in $D^{\GB \times \Gm}(\tNC)$. Since $\SM^s_{\tNC}(\nabla^{s\lambda}_0) \cong \nabla^\lambda_0 \langle -1 \rangle$ by Proposition~\ref{prop:nabla}\eqref{it:nabla-Ss}, using the second triangle and~\eqref{eqn:Hom-vanishing-DN} we obtain that $\Hom^n(\Delta^\lambda_0, \FC \langle m \rangle)$ is isomorphic to $\FM$ if $n=0$ and $m=1$, and $0$ otherwise.
Then using the first triangle we deduce that, similarly,
$\Hom^n \bigl( \Delta^\lambda_0, \TM_{\tNC}^s(\nabla^{s\lambda}_0) \langle m \rangle \bigr)$ is isomorphic to $\FM$ if $n=0$ and $m=1$, and $0$ otherwise. Finally, we obtain that~\eqref{eqn:Delta-S-1} also holds in this case.

Now let $\mu \in \XB \smallsetminus \{ s \lambda \}$. We want to show that
\begin{equation}
\label{eqn:Delta-S-2}
\Hom^n \bigl( \SM_{\tNC}^s(\Delta^\lambda_0), \nabla_0^{\mu} \langle m \rangle \bigr) =0 \quad \text{for all $n,m \in \ZM$.}
\end{equation}
As above
we have
\[
\Hom^n \bigl( \SM_{\tNC}^s(\Delta^\lambda_0), \nabla_0^{\mu} \langle m \rangle \bigr) \cong \Hom^n \bigl( \Delta^\lambda_0, \TM_{\tNC}^s(\nabla_0^{\mu}) \langle m \rangle \bigr).
\]
If $\mu \prec s\mu$ then by Proposition~\ref{prop:nabla}\eqref{it:nabla-Ss} we have $\TM_{\tNC}^s(\nabla^{\mu}_0) \cong \nabla_0^{s\mu} \langle 1 \rangle$, and we deduce~\eqref{eqn:Delta-S-2} since $\lambda \neq s\mu$. The case $\mu=s\mu$ is similar, using Proposition~\ref{prop:nabla}\eqref{it:nabla-2}.
If $s \mu \prec \mu$ (which implies $\mu \neq \lambda$) then using the exact sequences~\eqref{eqn:ses-tNC-1} and~\eqref{eqn:ses-tNC-2} we obtain triangles
\[
\GC \to \TM_{\tNC}^s (\nabla_0^{\mu}) \to \nabla_0^{\mu} \langle -1 \rangle \xrightarrow{[1]} \quad \text{and} \quad \nabla_0^{\mu} \langle 1 \rangle \to \SM_{\tNC}^s (\nabla_0^{\mu}) \to \GC \xrightarrow{[1]}
\]
for some object $\GC$ in $D^{\GB \times \Gm}(\tNC)$. Since $\SM_{\tNC}^s (\nabla_0^{\mu}) \cong \nabla_0^{s \mu} \langle -1 \rangle$ (again by Proposition~\ref{prop:nabla}\eqref{it:nabla-Ss}) and since $\lambda \notin \{\mu, s\mu\}$ we deduce that~\eqref{eqn:Delta-S-2} also holds in this case.

Finally the claim follows from Lemma~\ref{lem:standard-Hom-gr}, using~\eqref{eqn:Delta-S-1} and \eqref{eqn:Delta-S-2}.

\eqref{it:Delta-antidom}
Assume that $\lambda \in -\XB^+$. By definition of the dual exceptional sequence (see~\S\ref{ss:reminder}), it suffices to prove that
\[
\Hom^n(\OC_{\tNC}(\lambda), \nabla_0^\mu \langle m \rangle) = 0 \quad \text{if $\mu <' \lambda$.}
\]
Let $\mu \in \XB$, and assume that $\Hom^n(\OC_{\tNC}(\lambda), \nabla_0^\mu \langle m \rangle) \neq 0$ for some $n,m \in \ZM$. By Proposition~\ref{prop:nabla}\eqref{it:nabla-1} we know that $\nabla_0^\mu \in D^{\GB \times \Gm}_{\co(\mu)}(\tNC)$. Using Lemma~\ref{lem:line-bundles}\eqref{it:Hom-line-bundles} we deduce that there exists $\nu \in \co(\mu)$ such that $\nu \preceq \lambda$, which implies that $\lambda \in \co(\mu)$ since $\lambda \in -\XB^+$.
If $\lambda \in W\mu$ then $\lambda \leq' \mu$. If $\lambda \in \coo(\mu)$ then $\lambda <' \mu$ by definition of the order $\leq'$ (see~\S\ref{ss:exotic-t-structure}). In any case one cannot have $\mu <' \lambda$, which finishes the proof.
\end{proof}

\subsection{Affine braid group action}
\label{ss:braid-action-Springer}

By~\cite[Theorem 1.6.1]{br} there exists
a \emph{right} action\footnote{Here by a (left) action of a group on a category we mean a group morphism from the given group to the group of \emph{isomorphism classes} of autoequivalences of our category. As usual, a right action of a group is a left action of the opposite group.} of $\BM_{\mathrm{aff}}$ on $D^{\GB \times \Gm}(\tNC)$, where we denote by 
$\JS_b$ 
the action of $b \in \BM_{\mathrm{aff}}$ (defined up to isomorphism), and which is defined on generators by
\[
\JS_{T_s} \cong \TM^s_{\tNC}, \qquad \JS_{\theta_\lambda} \cong \OC_{\tNC}(\lambda) \otimes_{\OC_{\tNC}} (-).
\]
(The action considered in~\cite{br} is a \emph{left} action; but the relations in the presentation of $\Baff$ given in~\cite[\S 1.1]{br} are symmetric, so that checking the relations for a right action is the same as checking the relations for a left action. In other words, the action considered in the present paper is deduced from the action considered in~\cite{br} by composition with the unique anti-automorphism of $\Baff$ fixing the generators $T_s$ for $s$ a finite simple reflection and $\theta_\lambda$ for $\lambda \in \XB$.)

The results of \S\S\ref{ss:costandard}--\ref{ss:standard} have the following consequence.

\begin{prop}
\label{prop:nabla-delta-braid-group}
If $\lambda \in \XB$ and $w \in W t_\lambda \subset \Waff$, then we have
\[
\nabla^\lambda_0 \cong \JS_{T_w}(\OC_{\tNC}) \langle \ell(w) - \ell(t_\lambda) + 2 \delta(\lambda) \rangle, \quad \Delta^\lambda_0 \cong \JS_{(T_{w^{-1}})^{-1}}(\OC_{\tNC}) \langle -\ell(w) + \ell(t_\lambda) \rangle.
\]
\end{prop}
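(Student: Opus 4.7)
My plan is to reduce both formulas to the case $w = w_\lambda$ and then prove the reduced assertions by inductions using Lemma~\ref{lem:w-lambda} and Propositions~\ref{prop:nabla}--\ref{prop:Delta}. For the reduction, since $w_\lambda$ is the minimum-length element of $Wt_\lambda$, any $w \in Wt_\lambda$ can be written as $w = uw_\lambda$ with $u \in W$ and $\ell(w) = \ell(u) + \ell(w_\lambda)$, so $T_w = T_u T_{w_\lambda}$ and $(T_{w^{-1}})^{-1} = (T_{u^{-1}})^{-1}(T_{w_\lambda^{-1}})^{-1}$ in $\Baff$. A short induction on $\ell(u)$ using Lemma~\ref{lem:TM-SM-line-bundles}(1) shows $\JS_{T_u}(\OC_{\tNC}) \cong \OC_{\tNC}\langle -\ell(u)\rangle$, and hence $\JS_{(T_{u^{-1}})^{-1}}(\OC_{\tNC}) \cong \OC_{\tNC}\langle \ell(u)\rangle$. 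Combined with $\ell(w_\lambda) = \ell(t_\lambda) - \delta(\lambda)$, both formulas for general $w$ reduce to
\[
\nabla_0^\lambda \cong \JS_{T_{w_\lambda}}(\OC_{\tNC})\langle \delta(\lambda)\rangle \qquad \text{and} \qquad \Delta_0^\lambda \cong \JS_{(T_{w_\lambda^{-1}})^{-1}}(\OC_{\tNC})\langle \delta(\lambda)\rangle.
\]

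I would prove the $\nabla$-assertion by induction on $\delta(\lambda)$. The base case $\delta(\lambda) = 0$ is immediate: $w_\lambda = t_\lambda$ and $T_{t_\lambda} = \theta_\lambda$, so $\JS_{T_{w_\lambda}}(\OC_{\tNC}) = \OC_{\tNC}(\lambda) = \nabla_0^\lambda$ by Corollary~\ref{cor:nabla-dominant}. For the inductive step, let $v = us$ be the decomposition from the proof of Lemma~\ref{lem:w-lambda} (so $u < v$ in $W$); that lemma supplies $\delta(s\lambda) = \delta(\lambda) - 1$ and the key identity $w_{s\lambda} = w_\lambda \cdot s$ with $\ell(w_{s\lambda}) = \ell(w_\lambda) + 1$, hence $T_{w_{s\lambda}} = T_{w_\lambda}\cdot T_s$ in $\Baff$. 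A brief computation using $vs < v$, dominance of $v\lambda$, and minimality of $v$ shows $\langle\lambda, \alpha^\vee\rangle < 0$ where $\alpha$ is the simple root of $s$, hence $\lambda \prec s\lambda$. Proposition~\ref{prop:nabla}(3), applied with $s\lambda$ in place of $\lambda$, then yields $\nabla_0^\lambda \cong \SM_{\tNC}^s(\nabla_0^{s\lambda})\langle 1\rangle$; substituting the induction hypothesis and using $\SM_{\tNC}^s = \JS_{(T_s)^{-1}}$ together with the right-action identity $\JS_{(T_s)^{-1}}\circ \JS_{T_{w_{s\lambda}}} = \JS_{T_{w_{s\lambda}}\cdot (T_s)^{-1}} = \JS_{T_{w_\lambda}}$ completes the step.

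The $\Delta$-assertion is proved by an analogous induction whose base case is instead $\lambda$ antidominant and whose step runs in the reverse direction in $\delta$. For antidominant $\lambda$, Proposition~\ref{prop:Delta}(2) gives $\Delta_0^\lambda = \OC_{\tNC}(\lambda)$; an explicit computation using Lemma~\ref{lem:T-t-lambda} (with $w_\lambda^{-1} = t_{-\lambda}\cdot v_0^{-1}$, where $v_0 \in W$ is of minimal length with $v_0\lambda$ dominant) gives $(T_{w_\lambda^{-1}})^{-1} = T_{v_0}\cdot \theta_\lambda$, and hence $\JS_{(T_{w_\lambda^{-1}})^{-1}}(\OC_{\tNC})\langle \delta(\lambda)\rangle \cong \OC_{\tNC}(\lambda)$. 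For the inductive step, Proposition~\ref{prop:Delta}(1) gives $\Delta_0^{s\lambda} \cong \SM_{\tNC}^s(\Delta_0^\lambda)\langle -1\rangle$ (valid since $\lambda \prec s\lambda$, as above), and the propagation of the formula uses the braid-group identity $(T_{w_{s\lambda}^{-1}})^{-1} = (T_{w_\lambda^{-1}})^{-1}\cdot (T_s)^{-1}$, obtained by inverting $T_{w_{s\lambda}^{-1}} = T_s\cdot T_{w_\lambda^{-1}}$ (itself a consequence of the key identity of Lemma~\ref{lem:w-lambda}). The main technical point is the careful bookkeeping of these length and braid-group identities so that they align in both directions; all required combinatorial input is supplied by Lemma~\ref{lem:w-lambda} and the explicit antidominant base case.
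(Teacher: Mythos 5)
Your proof is correct and uses the same circle of ideas as the paper: both reduce to a single representative of $Wt_\lambda$ via $\JS_{T_v}(\OC_{\tNC}) \cong \OC_{\tNC}\langle -\ell(v)\rangle$ for $v \in W$, and both peel off $\SM^s_{\tNC}$-factors one at a time using Propositions~\ref{prop:nabla}\eqref{it:nabla-Ss} and~\ref{prop:Delta}\eqref{it:Delta-Ss} starting from the dominant (resp.~antidominant) case handled by Corollary~\ref{cor:nabla-dominant} (resp.~Proposition~\ref{prop:Delta}\eqref{it:Delta-antidom}). The only organizational difference is that you reduce to $w_\lambda$ and package the computation as an induction on $\delta$ driven by the identity $w_{s\lambda} = w_\lambda s$ from the proof of Lemma~\ref{lem:w-lambda}, whereas the paper reduces to $t_\lambda$ and uses Lemma~\ref{lem:T-t-lambda} together with a fixed reduced expression for $v$; these are the same argument written in two notations.
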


\begin{proof}
Using the formula
\begin{equation}
\label{eqn:J-O}
\JS_{T_v}(\OC_{\tNC}) \cong  \OC_{\tNC} \langle -\ell(v) \rangle
\end{equation}
for $v \in W$ (see Lemma~\ref{lem:TM-SM-line-bundles}(2)), it is enough to prove each isomorphism for one element in $Wt_\lambda$, for instance for $t_\lambda$.

Let us consider the first isomorphism.
Let $v \in W$ be of minimal length such that $v \lambda \in \XB^+$. By Lemma~\ref{lem:T-t-lambda}, we have $T_{t_\lambda} = T_{v^{-1}} \cdot \theta_{v\lambda} \cdot (T_{v^{-1}})^{-1}$. We deduce that
\[
 \JS_{T_{t_\lambda}}(\OC_{\tNC}) \cong \JS_{(T_{v^{-1}})^{-1}} \JS_{\theta_{v \lambda}} \JS_{T_{v^{-1}}}(\OC_{\tNC}) \cong \JS_{(T_{v^{-1}})^{-1}} \bigl( \OC_{\tNC}(v\lambda) \bigr) \langle -\ell(v) \rangle,
\]
where the last isomorphism uses~\eqref{eqn:J-O}.
By Corollary~\ref{cor:nabla-dominant}, we have $\OC_{\tNC}(v \lambda) \cong \nabla^{v \lambda}_0$. Hence if $v=s_1 \cdots s_r$ is a reduced expression for $v \in W$, we obtain an isomorphism
\[
\JS_{T_{t_\lambda}}(\OC_{\tNC}) \cong \SM_{\tNC}^{s_r} \circ \cdots \circ \SM_{\tNC}^{s_1}(\nabla_0^{v \lambda}) \langle -r \rangle.
\]
Now we have
\[
\lambda \prec s_r \lambda \prec \cdots \prec s_1v\lambda \prec v\lambda,
\]
hence using Proposition~\ref{prop:nabla}\eqref{it:nabla-Ss} repeatedly, we deduce an isomorphism
\[
\JS_{T_{t_\lambda}}(\OC_{\tNC}) \cong \nabla_0^\lambda \langle -2r \rangle.
\]
Since $r=\delta(\lambda)$, this finishes the proof of the first isomorphism.

Now we prove the second isomorphism when $w=t_\lambda$. Let $v \in W$ be of minimal length such that $v\lambda$ is antidominant. Then using Lemma~\ref{lem:T-t-lambda} we obtain that
\[
T_{t_\lambda^{-1}} = T_{t_{-\lambda}} = T_{v^{-1}} \cdot \theta_{-v\lambda} \cdot (T_{v^{-1}})^{-1}.
\]
Hence
\[
\JS_{(T_{t_\lambda^{-1}})^{-1}}(\OC_{\tNC}) \cong \JS_{(T_{v^{-1}})^{-1}} \JS_{\theta_{v\lambda}} \JS_{T_{v^{-1}}}(\OC_{\tNC}) \cong \JS_{(T_{v^{-1}})^{-1}} \bigl( \OC_{\tNC}(v\lambda) \bigr) \langle -\ell(v) \rangle
\]
(where the last isomorphism uses~\eqref{eqn:J-O}). Since $v\lambda$ is antidominant, by Proposition~\ref{prop:Delta}\eqref{it:Delta-antidom} we have $\OC_{\tNC}(v\lambda) \cong \Delta_0^{v\lambda}$. Choose a reduced expression $v=s_1 \cdots s_r$. Then we obtain that
\[
\JS_{(T_{t_\lambda^{-1}})^{-1}}(\OC_{\tNC}) \cong \SM_{\tNC}^{s_r} \circ \cdots \circ \SM_{\tNC}^{s_1}(\Delta_0^{v\lambda}) \langle -r \rangle.
\]
Since $v \lambda \prec s_1v \lambda \prec \cdots \prec s_r \lambda \prec \lambda$, a repeated application of Proposition~\ref{prop:Delta}\eqref{it:Delta-Ss} allows to conclude that $\JS_{(T_{t_\lambda^{-1}})^{-1}}(\OC_{\tNC}) \cong \Delta_0^\lambda$, which finishes the proof.
\end{proof}

\begin{remark}
\label{rmk:standard-action}
\begin{enumerate}
\item
It follows from Proposition~\ref{prop:nabla-delta-braid-group} that the costandard, resp.~standard, objects are exactly the objects of the form $\JS_{T_w}(\OC_{\tNC}) \langle m \rangle$, resp.~of the form $\JS_{T_w^{-1}}(\OC_{\tNC}) \langle m \rangle$, for $w \in \Waff$ and $m \in \ZM$. 
\item
Formulas similar to those of Proposition~\ref{prop:nabla-delta-braid-group} appear (in characteristic zero, and without the $\Gm$-equivariance) in~\cite[Proof of Lemma~6.2.4]{bm}.
\end{enumerate}
\end{remark}

It is convenient (see~\cite{achar2, mr}) to use a slightly different normalization of the standard and costandard objects, setting
\[
\nabla^\lambda_{\tNC} := \JS_{T_{w_\lambda}}(\OC_{\tNC}) \cong \nabla^\lambda_0 \langle -\delta(\lambda) \rangle, \qquad
\Delta^\lambda_{\tNC} := \JS_{(T_{w_\lambda^{-1}})^{-1}}(\OC_{\tNC}) \cong \Delta^\lambda_0 \langle -\delta(\lambda) \rangle.
\]
(In these formulas we use Lemma~\ref{lem:w-lambda}.)

\subsection{Standard and costandard objects are in the heart}

The main result of this subsection is that the objects $\Delta^\lambda_{\tNC}$ and $\nabla^\lambda_{\tNC}$ belong to $\ES^{\GB \times \Gm}(\tNC)$, see Corollary~\ref{cor:nabla-delta-heart}. Our proof was inspired by a proof of the similar claim for (co-)standard objects in the mixed derived category of a flag variety in an earlier version of~\cite{modrap2}.

\begin{prop}
\label{prop:exactness}
For any $w \in \Waff$, the functor $\JS_{T_w}$ is right t-exact, and the functor $\JS_{T_w^{-1}}$ is left t-exact.
\end{prop}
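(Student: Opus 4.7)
The plan is to prove that $\JS_{T_w}$ is right t-exact for every $w \in \Waff$; the left t-exactness of $\JS_{T_w^{-1}}$ will then follow by biadjunction, since inverse autoequivalences are biadjoint and the right t-exactness of an equivalence $F$ is equivalent to the left t-exactness of $F^{-1}$. Writing $w \in \Waff$ as $\omega \cdot w'$ with $\omega \in \Omega$ and $w' \in \Waff^\Cox$, and then expanding $w'$ as a reduced product of simple reflections, the braid identity $T_{vw}=T_v T_w$ (valid whenever $\ell(vw)=\ell(v)+\ell(w)$) together with the right-action rule $\JS_{ab}=\JS_b \circ \JS_a$ expresses $\JS_{T_w}$ as a composition of the elementary functors $\JS_{T_s}$ (for simple reflections $s$) and $\JS_{T_\omega}$ (for $\omega \in \Omega$). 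Since composition preserves right t-exactness, it suffices to handle each generator separately. For $\omega \in \Omega$, conjugation by $\omega$ permutes the simple reflections and hence the elements $w_\mu$; combined with Proposition~\ref{prop:nabla-delta-braid-group} this shows that $\JS_{T_\omega}$ permutes the standard objects up to grading shift, and so is t-exact.

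For a simple reflection $s$, it is enough to verify $\JS_{T_s}(\Delta^\lambda_0) \in \mathsf{D}^{\leq 0}$ for every $\lambda \in \XB$, since $\mathsf{D}^{\leq 0}$ is generated under extensions and non-negative shifts by the $\Delta^\lambda_0 \langle m \rangle$, and $\JS_{T_s}$ is triangulated and commutes with grading shifts. When $s=s_\alpha$ is a finite simple reflection, the short exact sequences~\eqref{eqn:ses-tNC-1}--\eqref{eqn:ses-tNC-2} yield, after a twist by $\langle -1 \rangle$, distinguished triangles of endofunctors
\[
\DC_s \to \TM^s_{\tNC} \to (-)\langle -1 \rangle \xrightarrow{[1]}
\quad \text{and} \quad
(-)\langle 1 \rangle \to \SM^s_{\tNC} \to \DC_s \xrightarrow{[1]},
\]
where $\DC_s$ is the Fourier--Mukai transform with kernel $\OC_{Y_s}(-\rho,\rho-\alpha)\langle -1 \rangle$. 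If $\lambda \succeq s\lambda$, applying Proposition~\ref{prop:Delta}(1) to $s\lambda$ (or directly when $\lambda=s\lambda$) gives $\TM^s_{\tNC}(\Delta^\lambda_0) \cong \Delta^{s\lambda}_0\langle -1 \rangle \in \mathsf{D}^{\leq 0}$. If $\lambda \prec s\lambda$, Proposition~\ref{prop:Delta}(1) gives $\SM^s_{\tNC}(\Delta^\lambda_0) \cong \Delta^{s\lambda}_0 \langle 1 \rangle$, and the second triangle applied to $\Delta^\lambda_0$ becomes
\[
\Delta^\lambda_0 \langle 1 \rangle \to \Delta^{s\lambda}_0 \langle 1 \rangle \to \DC_s(\Delta^\lambda_0) \xrightarrow{[1]},
\]
displaying $\DC_s(\Delta^\lambda_0)$ as the cone of a morphism between two objects of $\mathsf{D}^{\leq 0}$, hence itself in $\mathsf{D}^{\leq 0}$. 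Feeding this into the first triangle, whose remaining term $\Delta^\lambda_0\langle -1 \rangle$ also lies in $\mathsf{D}^{\leq 0}$, yields $\TM^s_{\tNC}(\Delta^\lambda_0) \in \mathsf{D}^{\leq 0}$.

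The main obstacle lies in the case of an affine simple reflection $s_0$: Propositions~\ref{prop:nabla} and~\ref{prop:Delta} are formulated only for finite simple reflections, and the subvariety $Z_{s_0}' \subset \tNC \times \tNC$ has no direct analog. A natural reduction uses the Bernstein presentation $T_{s_0} = \theta_\theta \cdot T_{s_\theta}^{-1}$ (where $\theta$ is the highest root and $s_\theta \in W$), but this expresses $\JS_{T_{s_0}}$ in terms of $\JS_{T_{s_\theta}^{-1}}$, which by biadjunction with the finite case is naturally left t-exact --- not directly what is needed. Completing the argument thus requires a supplementary direct verification of $\JS_{T_{s_0}}(\Delta^\lambda_0) \in \mathsf{D}^{\leq 0}$ on each generating standard, using the specific interplay of $\theta$ with $\XB$-weights and the explicit formulas from Proposition~\ref{prop:nabla-delta-braid-group} expressing standards as iterated applications of the braid action on $\OC_{\tNC}$.
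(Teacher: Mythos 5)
Your reduction to generators is sound, and so are the arguments you give for the length-zero elements and for finite simple reflections; for the latter you argue via $\preceq$ on $\XB$ and Proposition~\ref{prop:Delta}, where the paper instead writes every standard object uniformly as $\JS_{T_w^{-1}}(\OC_{\tNC})\langle m \rangle$ (Remark~\ref{rmk:standard-action}) and argues via the Bruhat order on $\Waff$, but the distinguished triangles coming from~\eqref{eqn:ses-tNC-1}--\eqref{eqn:ses-tNC-2} do the same work in both accounts.

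The affine simple reflections are, as you acknowledge, a genuine gap, and the Bernstein-presentation route you sketch does not close it: writing $T_{s_0} = \theta_\theta T_{s_\theta}^{-1}$ leaves you with the factor $\JS_{\theta_\theta}$ (tensoring by $\OC_{\tNC}(\theta)$), which has no evident exactness of its own. The missing ingredient is a group-theoretic fact used by the paper: by~\cite[Lemma~6.1.2]{riche2}, for any affine simple reflection $s$ there exist a \emph{finite} simple reflection $t$ and an element $b \in \Baff$ with $T_s = b T_t b^{-1}$. Conjugating the distinguished triangles attached to $t$ by the invertible kernel of $\JS_b$ yields distinguished triangles that play the same role for $s$, so the finite-case argument goes through verbatim. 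The paper's choice to parametrize standards as $\JS_{T_w^{-1}}(\OC_{\tNC})\langle m \rangle$ is precisely what makes this step painless: after applying the (conjugated) triangles to such an object, the two known terms are again standards by the braid relations in $\Baff$, so the auxiliary cone is squeezed into $\mathsf{D}^{\leq 0}$ without any appeal to Propositions~\ref{prop:nabla} or~\ref{prop:Delta}. The obstacles you identified --- that those propositions hold only for finite reflections, and that there is no $Z_{s_0}'$ --- are thus bypassed rather than overcome, via this conjugacy trick.
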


\begin{proof}
It suffices to prove the claim when $\ell(w) \in \{0,1\}$. The case $\ell(w)=0$ is easy: in fact, since in this case $T_w T_{w'} = T_{ww'}$ for all $w' \in \Waff$, it follows from Remark~\ref{rmk:standard-action} that the functor $\JS_{T_w}$ sends every standard, resp.~costandard, object to a standard, resp.~costandard, object. Hence it is t-exact. And since $T_w^{-1} = T_{w^{-1}}$, $\JS_{T_w^{-1}}$ is also t-exact.

Now, assume that $s$ is a simple reflection (finite or affine). By Remark~\ref{rmk:standard-action}, any standard object is of the form $\JS_{T_w^{-1}}(\OC_{\tNC}) \langle m \rangle$ for some $w \in \Waff$ and $m \in \ZM$. Assume first that $\ell(sw)<\ell(w)$. Then we have $T_w = T_s T_{sw}$, so that the object
\[
\JS_{T_s} \bigl( \JS_{T_w^{-1}}(\OC_{\tNC}) \langle m \rangle \bigr) \cong \JS_{T_s} \JS_{T_s^{-1}} \JS_{T_{sw}^{-1}} (\OC_{\tNC}) \langle m \rangle \cong \JS_{T_{sw}^{-1}} (\OC_{\tNC}) \langle m \rangle
\]
is standard, hence in particular in $\bigl( D^{\GB \times \Gm}(\tNC) \bigr)^{\leq 0}$. On the other hand, if $\ell(sw)>\ell(w)$ then we have $T_s T_w = T_{sw}$, hence
\[
\JS_{T_s} \bigl( \JS_{T_w^{-1}}(\OC_{\tNC}) \langle m \rangle \bigr) = \JS_{T_s} \JS_{T_s} \JS_{T_{sw}^{-1}} (\OC_{\tNC}) \langle m \rangle.
\]
Using exact sequences~\eqref{eqn:ses-tNC-1} and~\eqref{eqn:ses-tNC-2} we obtain distinguished triangles
\begin{gather*}
\FC \to \JS_{T_s} \JS_{T_s} \JS_{T_{sw}^{-1}} (\OC_{\tNC}) \langle m \rangle \to \JS_{T_s} \JS_{T_{sw}^{-1}} (\OC_{\tNC}) \langle m \rangle \xrightarrow{[1]}, \\
\JS_{T_s} \JS_{T_{sw}^{-1}} (\OC_{\tNC}) \langle m+2 \rangle \to \JS_{T_{sw}^{-1}} (\OC_{\tNC}) \langle m \rangle \to \FC \xrightarrow{[1]}
\end{gather*}
for some object $\FC$ in $D^{\GB \times \Gm}(\tNC)$. (More precisely, the exact sequences exist only when $s$ is a \emph{finite} simple reflection. But if $s$ is affine, then by~\cite[Lemma~6.1.2]{riche2} $T_s$ is conjugate in $\BM_{\mathrm{aff}}$ to $T_t$ for some \emph{finite} simple reflection $t$; hence in this case, by conjugating the exact sequences associated with $t$ by the appropriate kernel, we obtain distinguished triangles which play a role similar to the exact sequences~\eqref{eqn:ses-tNC-1} and~\eqref{eqn:ses-tNC-2}.) Since both $\JS_{T_{sw}^{-1}} (\OC_{\tNC})$ and $\JS_{T_s} \JS_{T_{sw}^{-1}} (\OC_{\tNC}) \cong \JS_{T_w^{-1}}(\OC_{\tNC})$ are standard objects, the second triangle implies that $\FC$ is in $\bigl( D^{\GB \times \Gm}(\tNC) \bigr)^{\leq 0}$. Then the first triangle implies that $\JS_{T_s} \JS_{T_s} \JS_{T_{sw}^{-1}} (\OC_{\tNC}) \langle m \rangle$ is also in $\bigl( D^{\GB \times \Gm}(\tNC) \bigr)^{\leq 0}$, which finishes the proof of the right t-exactness of $\JS_{T_s}$.

Finally, to prove that $\JS_{T_s^{-1}}$ is left t-exact, we simply remark that this functor has a right t-exact left adjoint (namely its inverse $\JS_{T_s}$); hence it is indeed left t-exact.
\end{proof}

\begin{cor}
\label{cor:nabla-delta-heart}
The objects $\nabla^\lambda_{\tNC}$ and $\Delta^\lambda_{\tNC}$ belong to $\ES^{\GB \times \Gm}(\tNC)$.\end{cor}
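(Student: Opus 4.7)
The plan is to combine Proposition \ref{prop:exactness} with the description of $\nabla^\lambda_{\tNC}$ and $\Delta^\lambda_{\tNC}$ as images of the structure sheaf under braid group action functors, together with Remark \ref{rmk:standard-action}.

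First, I would observe that $\OC_{\tNC}$ lies in the heart $\ES^{\GB\times\Gm}(\tNC)$. Indeed, $0 \in \XB$ is dominant so, using Corollary~\ref{cor:nabla-dominant} and the fact that $w_0 = e$, we have $\OC_{\tNC} \cong \nabla^0_0 \cong \Delta^0_0 \cong \nabla^0_{\tNC} \cong \Delta^0_{\tNC}$, which lies in $(D^{\GB\times\Gm}(\tNC))^{\leq 0} \cap (D^{\GB\times\Gm}(\tNC))^{\geq 0}$ by definition of the t-structure in \S\ref{ss:reminder}.

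Next, for $\nabla^\lambda_{\tNC} = \JS_{T_{w_\lambda}}(\OC_{\tNC})$: by Remark \ref{rmk:standard-action}(1) this is a costandard object (of the form $\JS_{T_w}(\OC_{\tNC})\langle m\rangle$ with $w = w_\lambda$ and $m = 0$), hence a direct summand-shift of some $\nabla^\mu_0$; therefore it belongs to $(D^{\GB\times\Gm}(\tNC))^{\geq 0}$ by the definition of the latter in \S\ref{ss:reminder}. For the other inclusion, Proposition \ref{prop:exactness} asserts that $\JS_{T_{w_\lambda}}$ is right t-exact, and $\OC_{\tNC} \in (D^{\GB\times\Gm}(\tNC))^{\leq 0}$, so $\nabla^\lambda_{\tNC} \in (D^{\GB\times\Gm}(\tNC))^{\leq 0}$ as well.

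The argument for $\Delta^\lambda_{\tNC} = \JS_{(T_{w_\lambda^{-1}})^{-1}}(\OC_{\tNC})$ is symmetric: by Remark \ref{rmk:standard-action}(1) it is a standard object, hence lies in $(D^{\GB\times\Gm}(\tNC))^{\leq 0}$; and by Proposition \ref{prop:exactness} the functor $\JS_{(T_{w_\lambda^{-1}})^{-1}} = \JS_{T_v^{-1}}$ (with $v = w_\lambda^{-1}$) is left t-exact, so applied to $\OC_{\tNC} \in (D^{\GB\times\Gm}(\tNC))^{\geq 0}$ it produces an object in $(D^{\GB\times\Gm}(\tNC))^{\geq 0}$. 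I do not anticipate a genuine obstacle: all the real work sits in Proposition \ref{prop:exactness} and in the identification of $\nabla^\lambda_{\tNC}$ and $\Delta^\lambda_{\tNC}$ with (co)standard objects from Proposition \ref{prop:nabla-delta-braid-group}; the corollary is essentially a bookkeeping combination of these two facts.
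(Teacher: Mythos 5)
Your proposal is correct and follows essentially the same route as the paper: identify $\nabla^\lambda_{\tNC}$ and $\Delta^\lambda_{\tNC}$ as (co)standard objects (hence in $D^{\geq 0}$, resp.~$D^{\leq 0}$, by definition of the t-structure) and then apply the right/left t-exactness of Proposition~\ref{prop:exactness} to $\OC_{\tNC} = \nabla^0_{\tNC} = \Delta^0_{\tNC}$ to obtain the other containment. The only cosmetic imprecision is the phrase ``direct summand-shift of some $\nabla^\mu_0$'': by definition $\nabla^\lambda_{\tNC} \cong \nabla^\lambda_0\langle -\delta(\lambda)\rangle$, so it is simply a $\Gm$-twist of a costandard object, no direct summand needed.
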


\begin{proof}
By Propositions \ref{prop:nabla-delta-braid-group} and \ref{prop:exactness} we obtain that $\nabla^\lambda_{\tNC} \in \bigl( D^{\GB\times\GM}(\tNC) \bigr)^{\leq 0}$ (since $\OC_{\tNC}=\Delta^0_{\tNC}=\nabla^0_{\tNC}$ belongs to $\ES^{\GB \times \Gm}(\tNC)$). By definition this object belongs to $\bigl( D^{\GB\times\GM}(\tNC) \bigr)^{\geq 0}$; hence it is in $\ES^{\GB \times \Gm}(\tNC)$.
Similar arguments show that $\Delta^\lambda_{\tNC}$ also belongs to $\ES^{\GB \times \Gm}(\tNC)$.
\end{proof}

\subsection{Highest weight structure}
\label{ss:qh-structure}

It follows from Corollary~\ref{cor:nabla-delta-heart} that the abelian category $\ES^{\GB \times \Gm}(\tNC)$, endowed with the collections of objects $\{\Delta^\lambda_{\tNC}, \, \lambda \in \XB\}$ and $\{\nabla^\lambda_{\tNC}, \, \lambda \in \XB\}$, is graded highest weight, in the sense that the poset $\XB$ is interval-finite and the category satisfies axioms (2)--(5) of~\cite[Definition~A.1]{modrap2}.\footnote{Cline, Parshall and Scott introduced the notion of a highest weight category in~\cite[Definition 3.1]{cps-fdalg} and of a positively graded highest weight category in~\cite[(1.2)]{cps-dual}.}
In particular, it makes sense to consider the \emph{tilting objects} in $\ES^{\GB \times \Gm}(\tNC)$, i.e.~the objects which admit both a standard filtration (i.e.~a filtration with subquotients which are standard objects)
and a costandard filtration (i.e.~a filtration with subquotients which are costandard objects).
The subcategory consisting of such objects will be denoted by $\Tilt(\ES^{\GB \times \Gm}(\tNC))$.

If $X$ admits a standard filtration, resp.~a costandard filtration, we denote by $(X : \Delta^\lambda_{\tNC} \langle m \rangle)$, resp.~$(X : \nabla^\lambda_{\tNC} \langle m \rangle)$, the number of times $\Delta^\lambda_{\tNC} \langle m \rangle$, resp.~$\nabla^\lambda_{\tNC} \langle m \rangle$, appears in a standard, resp.~costandard, filtration of $X$. (This number does not depend on the filtration.) The general theory of graded highest weight categories implies that for any $\lambda \in \XB$ there exists a unique (up to isomorphism) indecomposable object $\TC^\lambda$ in $\Tilt(\ES^{\GB \times \Gm}(\tNC))$ which satisfies 
\[
(\TC^\lambda : \Delta_{\tNC}^\lambda)=1 \quad \text{and} \qquad (\TC^\lambda : \Delta^\mu_{\tNC} \langle m \rangle) \neq 0 \Rightarrow \mu \leq' \lambda.
\]
Moreover, every object in $\Tilt(\ES^{\GB \times \Gm}(\tNC))$ is a direct sum of objects of the form $\TC^\lambda \langle m \rangle$ for some $\lambda \in \XB$ and $m \in \ZM$. (See e.g.~\cite[Appendix~A]{modrap2} for references on this subject.)


The fact that $\ES^{\GB \times \Gm}(\tNC)$ is highest weight also has the following consequence (see~\cite[Lemma~A.5 \& Lemma~A.6]{modrap2}).

\begin{prop}
The natural functors
\[
\Kb \Tilt(\ES^{\GB \times \Gm}(\tNC)) \to \Db \ES^{\GB \times \Gm}(\tNC) \to D^{\GB \times \Gm}(\tNC)
\]
are equivalences of categories.\qed
\end{prop}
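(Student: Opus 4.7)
The plan is to reduce to the general machinery for graded highest weight categories cited at the end of the statement. The preceding discussion (in particular Corollary~\ref{cor:nabla-delta-heart} together with the Ext-orthogonality~\eqref{eqn:Hom-vanishing-DN}) has already established that $\ES^{\GB \times \Gm}(\tNC)$, with the families $\{\Delta^\lambda_{\tNC}\}$ and $\{\nabla^\lambda_{\tNC}\}$, is graded highest weight in the sense of~\cite[Definition~A.1]{modrap2}. To conclude, one just needs to verify the hypotheses of~\cite[Lemmas~A.5 and~A.6]{modrap2}, so the proof amounts to a two-step check.

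First, for the realization functor $\Db \ES^{\GB \times \Gm}(\tNC) \to D^{\GB \times \Gm}(\tNC)$, I would invoke a Beilinson-type criterion: the t-structure is bounded by construction (see~\S\ref{ss:exotic-t-structure}); the heart generates $D^{\GB \times \Gm}(\tNC)$ as a triangulated category, because the costandard objects do (they generate the same subcategory as $\{\OC_{\tNC}(\lambda)\langle m\rangle\}$, which generates by Corollary~\ref{cor:exceptional-sequence}\eqref{it:generation}); and the canonical maps
\[
\Ext^n_{\ES^{\GB \times \Gm}(\tNC)}(\FC,\GC) \to \Hom^n_{D^{\GB \times \Gm}(\tNC)}(\FC,\GC)
\]
are isomorphisms for all $\FC,\GC$ in the heart and all $n \geq 0$. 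For this last point it suffices to take $\FC=\Delta^\lambda_{\tNC}\langle k\rangle$ and $\GC=\nabla^\mu_{\tNC}\langle \ell\rangle$: both sides vanish unless $\lambda=\mu$, $n=k=\ell=0$, where both equal $\FM$, and by dévissage along standard (resp.~costandard) filtrations (which terminate because intervals in $(\XB,\leq')$ are finite, since each $\ZM\Phi$-coset is isomorphic to $\ZM_{\geq 0}$) the general case follows.

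Second, for $\Kb \Tilt(\ES^{\GB \times \Gm}(\tNC)) \to \Db \ES^{\GB \times \Gm}(\tNC)$, the standard fact is that in a graded highest weight category every standard object $\Delta^\lambda_{\tNC}$ admits a finite right resolution by tilting objects $\TC^\mu\langle m\rangle$ with $\mu \geq' \lambda$ (constructed inductively from the short exact sequences $0 \to \Delta^\lambda_{\tNC} \to \TC^\lambda \to C \to 0$ where $C$ has a $\Delta$-filtration by objects $\Delta^\mu_{\tNC}\langle m\rangle$ with $\mu <' \lambda$, and iterating using interval-finiteness). Together with the dual statement for costandards, this gives that the tilting subcategory is both generating and cogenerating of finite homological dimension inside $\Db \ES^{\GB \times \Gm}(\tNC)$. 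Combined with the vanishing $\Ext^n(\TC^\lambda\langle k\rangle, \TC^\mu\langle \ell\rangle) = 0$ for $n>0$ (standard consequence of having both a $\Delta$- and a $\nabla$-filtration), a classical homological algebra argument then identifies $\Kb \Tilt(\ES^{\GB \times \Gm}(\tNC))$ with $\Db \ES^{\GB \times \Gm}(\tNC)$.

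The main obstacle is the realization step, i.e.~the higher-degree Ext comparison, but this is precisely where interval-finiteness of $(\XB,\leq')$ and the Ext-orthogonality~\eqref{eqn:Hom-vanishing-DN} combine cleanly; all other ingredients are essentially formal consequences of the graded highest weight structure already in hand.
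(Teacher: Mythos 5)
Your proposal follows the same route as the paper, which simply cites the general graded highest weight machinery of~\cite[Lemmas~A.5 and~A.6]{modrap2} once the highest weight structure on $\ES^{\GB \times \Gm}(\tNC)$ is in place; you are essentially reconstructing the content of those lemmas. The sketch is sound and the ingredients you invoke (boundedness, generation by costandards, Ext-orthogonality~\eqref{eqn:Hom-vanishing-DN}, interval-finiteness, and Ext-vanishing between tiltings) are the right ones. One small imprecision worth noting: checking the Ext-comparison on $\Delta^\lambda_{\tNC}\langle k\rangle$ vs.~$\nabla^\mu_{\tNC}\langle\ell\rangle$ and then ``d\'evissage along standard (resp.~costandard) filtrations'' only extends the isomorphism to pairs where the source is $\Delta$-filtered and the target is $\nabla$-filtered, not to arbitrary objects of the heart. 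The actual argument in~\cite[Lemma~A.5]{modrap2} uses an effaceability criterion (every heart object embeds, at least after truncating to a finite ideal of weights, into a $\nabla$-filtered object, and dually surjects from a $\Delta$-filtered one), which is exactly what the interval-finiteness you invoke is there to guarantee; your d\'evissage should be rephrased in those terms to be airtight.
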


\subsection{Other consequences}
\label{ss:line-free}

The following results are consequences of Proposition~\ref{prop:exactness}. (See~\cite[Proposition~8.6]{arider2} for a different proof of some of these results, under some assumptions on $p$.)

\begin{prop}
\label{prop:objects-exotic}
\begin{enumerate}
\item
\label{it:line-bundles-exotic}
For $\lambda \in \XB$, the line bundle $\OC_{\tNC}(\lambda)$ belongs to the heart $\ES^{\GB \times \Gm}(\tNC)$.
\item
\label{it:free-exotic}
For any finite dimensional $\GB$-module $V$ and any $\lambda \in \XB$, the object $V \otimes \OC_{\tNC}(\lambda)$ belongs to $\ES^{\GB \times \Gm}(\tNC)$.
\item
\label{it:free-exact}
For any finite dimensional $\GB$-module $V$, the functor $V \otimes (-)$ is t-exact for the exotic t-structure.
\end{enumerate}
\end{prop}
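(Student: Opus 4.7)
The plan is to treat the three parts in order, with~\eqref{it:line-bundles-exotic} and~\eqref{it:free-exotic} feeding into~\eqref{it:free-exact}.

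For~\eqref{it:line-bundles-exotic}, I would write $\lambda=\lambda_1-\lambda_2$ with $\lambda_1,\lambda_2\in\XB^+$. The definition of the braid action gives the identities $\OC_{\tNC}(\lambda)\cong\JS_{T_{t_{\lambda_1}}}(\OC_{\tNC}(-\lambda_2))$ and $\OC_{\tNC}(\lambda)\cong\JS_{(T_{t_{\lambda_2}})^{-1}}(\OC_{\tNC}(\lambda_1))$. Since $\OC_{\tNC}(-\lambda_2)=\Delta_0^{-\lambda_2}$ lies in $\ES^{\GB\times\Gm}(\tNC)$ by Proposition~\ref{prop:Delta}\eqref{it:Delta-antidom} and $\JS_{T_{t_{\lambda_1}}}$ is right t-exact by Proposition~\ref{prop:exactness}, the first identity puts $\OC_{\tNC}(\lambda)$ in $\bigl(D^{\GB\times\Gm}(\tNC)\bigr)^{\leq 0}$. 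Dually, $\OC_{\tNC}(\lambda_1)=\nabla_0^{\lambda_1}$ by Corollary~\ref{cor:nabla-dominant} and $\JS_{(T_{t_{\lambda_2}})^{-1}}$ is left t-exact, so the second identity puts $\OC_{\tNC}(\lambda)$ in $\bigl(D^{\GB\times\Gm}(\tNC)\bigr)^{\geq 0}$, yielding~\eqref{it:line-bundles-exotic}.

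For~\eqref{it:free-exotic}, I would use the equivalence $\Coh^{\GB\times\Gm}(\tNC)\cong\Coh^{\BB\times\Gm}((\gg/\bg)^*)$ coming from $\tNC\cong\GB\times^\BB(\gg/\bg)^*$, under which $V\otimes\OC_{\tNC}(\lambda)$ corresponds to $V|_\BB\otimes\OC_{(\gg/\bg)^*}(\lambda)$. Since $\BB$ is solvable, the Lie--Kolchin theorem yields a $\BB$-stable filtration of $V|_\BB$ with one-dimensional subquotients $\FM_{\mu_i}$; translating back produces a filtration of $V\otimes\OC_{\tNC}(\lambda)$ in $\Coh^{\GB\times\Gm}(\tNC)$ whose successive quotients are line bundles on $\tNC$. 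Each of these line bundles belongs to $\ES^{\GB\times\Gm}(\tNC)$ by~\eqref{it:line-bundles-exotic}, and since the heart of a t-structure is closed under extensions in the ambient derived category, $V\otimes\OC_{\tNC}(\lambda)$ does as well.

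Part~\eqref{it:free-exact} is the main obstacle. The approach rests on two observations. First, by the projection formula, tensoring with $V$ (which is pulled back from the point) commutes with every convolution $\JS_b$, $b\in\Baff$. Second, $V^*\otimes-$ is simultaneously the left and the right adjoint of $V\otimes-$; hence by adjunction $V\otimes-$ is right t-exact iff $V^*\otimes-$ is left t-exact. Running this for all finite-dimensional $\GB$-modules reduces two-sided t-exactness to one-sided, so it suffices to verify that $V\otimes\nabla_0^\mu\in\bigl(D^{\GB\times\Gm}(\tNC)\bigr)^{\geq 0}$ for every $\mu\in\XB$. For dominant $\mu=\mu^+$, the identity $\nabla_0^{\mu^+}=\OC_{\tNC}(\mu^+)$ combined with~\eqref{it:free-exotic} supplies the base case. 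For general $\mu\in W\mu^+$, write $\mu=u\mu^+$ with $u\in W$ of minimal length with this property and fix a reduced expression $u=s_k\cdots s_1$; each intermediate step strictly decreases in the partial order on $W\mu^+$, so iterating Proposition~\ref{prop:nabla}\eqref{it:nabla-Ss} yields $\nabla_0^\mu\cong\SM^{s_k}_{\tNC}\cdots\SM^{s_1}_{\tNC}(\OC_{\tNC}(\mu^+))\langle k\rangle$. Commuting $V\otimes-$ past the functors $\SM^{s_i}_{\tNC}=\JS_{(T_{s_i})^{-1}}$, which are left t-exact by Proposition~\ref{prop:exactness}, and invoking the base case completes the proof.
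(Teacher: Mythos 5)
Your proposal is correct and follows essentially the same strategy as the paper for all three parts: part (1) uses the same decomposition $\lambda=\lambda_1-\lambda_2$ into dominant weights together with Proposition~\ref{prop:exactness} applied to $\JS_{T_{t_{\lambda_1}}}$ and $\JS_{T_{t_{\lambda_2}}^{-1}}$, part (2) uses the same filtration by line bundles (the paper states it directly while you pass through the $\BB$-equivariant model and Lie--Kolchin, but the underlying fact is identical), and part (3) uses the same reduction via adjunction of $V\otimes(-)$ and $V^*\otimes(-)$ together with the braid description of $\nabla_0^\mu$ and Proposition~\ref{prop:exactness}. The only cosmetic difference in (3) is that you apply Proposition~\ref{prop:nabla}\eqref{it:nabla-Ss} one simple reflection at a time, whereas the paper invokes the single identity $\nabla^\mu_{\tNC}\cong\JS_{(T_{v^{-1}})^{-1}}(\OC_{\tNC}(v\mu))$ from the proof of Proposition~\ref{prop:nabla-delta-braid-group} and then applies Proposition~\ref{prop:exactness} once; these amount to the same computation.
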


\begin{proof}
\eqref{it:line-bundles-exotic}
Write $\lambda=\mu-\nu$ where $\mu$ and $\nu$ are dominant. Then 
\[
\OC_{\tNC}(\lambda) \cong \OC_{\tNC}(\mu) \otimes_{\OC_{\tNC}} \OC_{\tNC}(-\nu) \cong \JS_{\theta_{\mu}} \bigl( \OC_{\tNC}(-\nu) \bigr) \cong \JS_{T_{t_{\mu}}} \bigl( \OC_{\tNC}(-\nu) \bigr)
\]
since $\theta_\mu=t_\mu$.
Now $\OC_{\tNC}(-\nu)$ belongs to $\ES^{\GB \times \Gm}(\tNC)$ by
Proposition~\ref{prop:Delta}\eqref{it:Delta-antidom} and Corollary~\ref{cor:nabla-delta-heart}. By Proposition~\ref{prop:exactness}, this implies that $\OC_{\tNC}(\lambda)$ belongs to $\bigl( D^{\GB\times\Gm}(\tNC) \bigr)^{\leq 0}$.

On the other hand, we also have
\[
\OC_{\tNC}(\lambda) \cong \OC_{\tNC}(-\nu) \otimes_{\OC_{\tNC}} \OC_{\tNC}(\mu) \cong  \JS_{\theta_{-\nu}} \bigl( \OC_{\tNC}(\mu) \bigr) \cong \JS_{T_{t_\nu}^{-1}} \bigl( \OC_{\tNC}(\mu) \bigr).
\]
Now $\OC_{\tNC}(\mu)$ belongs to $\ES^{\GB \times \Gm}(\tNC)$ by
Corollaries~\ref{cor:nabla-dominant} and~\ref{cor:nabla-delta-heart}. Using Proposition~\ref{prop:exactness}, this implies that $\OC_{\tNC}(\lambda)$ belongs to $\bigl( D^{\GB\times\Gm}(\tNC) \bigr)^{\geq 0}$.

Finally we obtain that $\OC_{\tNC}(\lambda)$ belongs to $\bigl( D^{\GB\times\Gm}(\tNC) \bigr)^{\leq 0} \cap\bigl( D^{\GB\times\Gm}(\tNC) \bigr)^{\geq 0} = \ES^{\GB \times \Gm}(\tNC)$, proving the claim.

\eqref{it:free-exotic}
The coherent sheaf $V \otimes \OC_{\tNC}(\lambda)$ admits a filtration, in $\Coh^{\GB \times \Gm}(\tNC)$, whose subquotients are of the form $\OC_{\tNC}(\lambda+\mu)$ where $\mu$ is a $\TB$-weight of $V$. Hence the claim follows from~\eqref{it:line-bundles-exotic}.

\eqref{it:free-exact}
Let us first prove that the functor $V \otimes (-)$ is left-exact for any $V$. In fact it suffices to prove that if $\mu \in \XB$, then $V \otimes \nabla^\mu_{\tNC}$ belongs to $\bigl( D^{\GB\times\Gm}(\tNC) \bigr)^{\geq 0}$. However, if $v \in W$ is of minimal length such that $v \mu$ is dominant, then we have seen in the proof of Proposition~\ref{prop:nabla-delta-braid-group} that
\[
\nabla^\mu_{\tNC} \cong \JS_{(T_{v^{-1}})^{-1}} \bigl( \OC_{\tNC}(v\mu) \bigr).
\]
We deduce that $V \otimes \nabla^\mu_{\tNC} \cong \JS_{(T_{v^{-1}})^{-1}} \bigl( V \otimes \OC_{\tNC}(v\mu) \bigr)$. By~\eqref{it:free-exotic}, $V \otimes \OC_{\tNC}(v\mu)$ belongs to $\ES^{\GB \times \Gm}(\tNC)$. Then the claim follows from Proposition~\ref{prop:exactness}.

Next, we observe that the left-exactness of the functor $V^* \otimes (-)$ implies that its left-adjoint, namely $V \otimes (-)$, is right-exact; this finishes the proof.
\end{proof}

\begin{cor}
\label{cor:negative-cohomology}
If $\FC \in \ES^{\GB \times \Gm}(\tNC)$, then the complex of coherent sheaves $\FC$ is concentrated in non-negative degrees.
\end{cor}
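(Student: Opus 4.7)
The strategy is to exploit Proposition~\ref{prop:objects-exotic}\eqref{it:line-bundles-exotic}, which places every twist $\OC_{\tNC}(\lambda)\langle m\rangle$ in the exotic heart. Because $\FC$ also lives in this heart and morphisms of strictly negative degree between objects in the heart of any t-structure vanish, I would begin by recording the Hom-vanishing
\[
\Hom^n_{D^{\GB \times \Gm}(\tNC)}\bigl(\OC_{\tNC}(\lambda)\langle m\rangle,\FC\bigr) = 0 \quad \text{for all } n<0,\; \lambda \in \XB,\; m \in \ZM.
\]
The remaining task is to convert this Hom-vanishing relative to the exotic t-structure into cohomological vanishing relative to the standard t-structure.

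I would argue by contradiction, assuming $\mathcal{H}^{i_0}(\FC) \neq 0$ for some minimal $i_0 < 0$, and then examining the standard-t-structure truncation triangle
\[
\mathcal{H}^{i_0}(\FC)[-i_0] \longto \FC \longto \tau^{>i_0}\FC \longtriright.
\]
Applying $\RHom(\OC_{\tNC}(\lambda)\langle m\rangle,-)$ and using that line bundles are locally free, this functor coincides with $R\Gamma_{\GB\times\Gm}\bigl(\OC_{\tNC}(-\lambda)\langle -m\rangle\otimes (-)\bigr)$, which is left t-exact for the standard t-structure. Since $\tau^{>i_0}\FC$ lives in standard degrees $\geq i_0+1$, this forces $\Hom^n(\OC_{\tNC}(\lambda)\langle m\rangle,\tau^{>i_0}\FC) = 0$ for $n\leq i_0$, and the resulting long exact sequence produces an isomorphism
\[
\Hom_{\Coh^{\GB\times\Gm}(\tNC)}\bigl(\OC_{\tNC}(\lambda)\langle m\rangle,\mathcal{H}^{i_0}(\FC)\bigr) \;\simeq\; \Hom^{i_0}\bigl(\OC_{\tNC}(\lambda)\langle m\rangle,\FC\bigr) = 0
\]
for all $(\lambda,m)$.

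To close, I would use the equivalence $\Coh^{\GB\times\Gm}(\tNC) \cong \Coh^{\BB\times\Gm}((\gg/\bg)^*)$ already exploited in the proof of Lemma~\ref{lem:line-bundles}: the coherent sheaf $\mathcal{H}^{i_0}(\FC)$ corresponds to a finitely generated graded $\BB$-equivariant $S(\gg/\bg)$-module whose $\UB$-invariants vanish in every $\TB\times\Gm$-weight. Applying the Lie--Kolchin theorem to a finite-dimensional $\BB$-stable generating subspace then forces this module, and hence $\mathcal{H}^{i_0}(\FC)$, to vanish, yielding the desired contradiction. The main subtlety I anticipate lies in cleanly bridging the two t-structures in the middle step; once that comparison is in place, the representation-theoretic input from Lie--Kolchin finishes the argument routinely.
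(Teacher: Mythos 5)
Your proof is correct, and it takes a genuinely different route from the paper's. The paper tests $\FC$ against the ``free'' objects $V\otimes\OC_{\tNC}(-\lambda)$ (with $V$ a finite-dimensional $\GB\times\Gm$-module and $\lambda$ dominant), which lie in $\ES^{\GB\times\Gm}(\tNC)$ by Proposition~\ref{prop:objects-exotic}\eqref{it:free-exotic}; it then uses the adjunction $\Hom^i(V\otimes\OC_{\tNC}(-\lambda),\FC)\cong\Hom^i(V,\Rder\Gamma(\tNC,\FC\otimes\OC_{\tNC}(\lambda)))$ and Serre vanishing for $\lambda$ sufficiently ample to detect negative cohomology of~$\FC$. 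You restrict attention to the smaller family of test objects $\OC_{\tNC}(\lambda)\langle m\rangle$ (Proposition~\ref{prop:objects-exotic}\eqref{it:line-bundles-exotic}) and run the standard truncation argument, which then leaves a purely representation-theoretic detection problem: show that a nonzero coherent sheaf $\mathcal{H}^{i_0}(\FC)$ cannot have vanishing $\Hom$ from every $\OC_{\tNC}(\lambda)\langle m\rangle$. Your solution --- pass to $\Coh^{\BB\times\Gm}((\gg/\bg)^*)$, use the free--forget adjunction $\Hom(S(\gg/\bg)\otimes\FM_\mu\langle m\rangle,M)\cong\Hom_{\Rep(\BB\times\Gm)}(\FM_\mu\langle m\rangle,M)$, and find a $\BB\times\Gm$-eigenvector by Lie--Kolchin --- is clean and avoids ampleness/Serre vanishing entirely, at the price of a separate lemma that the paper's larger collection of test objects renders unnecessary. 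One small phrasing note: you don't need a \emph{generating} subspace; any nonzero finite-dimensional $\BB\times\Gm$-stable subspace of $M$ (which exists since $M$ is an algebraic representation) already contains a common eigenvector by Lie--Kolchin, giving the contradiction directly. The middle step bridging the two t-structures is also fine: $\Rder\Hom_{\Coh^{\GB\times\Gm}(\tNC)}(\OC_{\tNC}(\lambda)\langle m\rangle,-)$ factors as derived equivariant global sections of a twist by Proposition~\ref{prop:Hom-equivariant}, and both $\Rder\Gamma$ and derived invariants are left t-exact for the standard t-structures, which is exactly what you invoke.
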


\begin{proof}
By standard arguments, it suffices to prove that if $\lambda \in \XB^+$ is sufficiently large (in the sense that $\langle \lambda, {\check \alpha} \rangle$ is sufficiently big for all simple coroots ${\check \alpha}$), then the complex of $\GB \times \Gm$-modules $\Rder\Gamma(\tNC, \FC \otimes_{\OC_{\tNC}} \OC_{\tNC}(\lambda))$ is concentrated in non-negative degrees. In turn, given $\lambda$, to prove this property it suffices to prove that
\[
\Hom_{\Db \Rep(\GB \times \Gm)}^i(V, \Rder\Gamma(\tNC, \FC \otimes_{\OC_{\tNC}} \OC_{\tNC}(\lambda)))=0
\]
for any finite-dimensional $\GB \times \Gm$-module $V$ and any $i<0$. 
However we have
\begin{multline*}
\Hom_{\Db \Rep(\GB \times \Gm)}^i(V, \Rder\Gamma(\tNC, \FC \otimes_{\OC_{\tNC}} \OC_{\tNC}(\lambda))) \\
\cong \Hom^i_{D^{\GB \times \Gm}(\tNC)}(V \otimes \OC_{\tNC}(-\lambda), \FC).
\end{multline*}
Now $V \otimes \OC_{\tNC}(-\lambda)$ belongs to $\ES^{\GB \times \Gm}(\tNC)$ by Proposition~\ref{prop:objects-exotic}\eqref{it:free-exotic}, hence the claim holds by definition of a t-structure.
\end{proof}

\section{Tilting objects}
\label{sec:tilting}

\subsection{Geometric braid group action and (co)standard objets on the Gro\-thendieck resolution}
\label{ss:tgg}

In this section we will consider the Grothendieck resolution
\[
\tgg:= \GB \times^{\BB} (\gg/\ng)^*.
\]
This variety is a vector bundle over $\Flag$, and it is endowed with an action of $\GB \times \Gm$, where $\GB$ acts naturally and the action of $\Gm$ is induced by the action on $(\gg/\ng)^*$ where $t \in \Gm$ acts by multiplication by $t^{-2}$. 
Recall that there is a natural map $\nu \colon \tgg =  \GB \times^{\BB} (\gg/\ng)^* \to \tg^*$ induced by the $\BB$-equivariant restriction morphism $(\gg/\ng)^* \to \tg^*$. There exists also a natural morphism $\tgg \to \gg^*$ induced by the coadjoint action.

We will consider the categories
\[
D^{\GB \times \Gm}(\tgg):=\Db \Coh^{\GB \times \Gm}(\tgg), \quad D^{\GB}(\tgg):=\Db \Coh^{\GB}(\tgg).
\]
We denote by $\langle 1 \rangle$ the functor of tensoring with the one-dimensional tautological $\Gm$-module. Then, as in~\eqref{eqn:Hom-Gm-equiv}, for any $\FC$, $\GC$ in $D^{\GB \times \Gm}(\tgg)$, the forgetful functor induces an isomorphism
\begin{equation}
\label{eqn:Hom-Gm-equiv-tgg}
\bigoplus_{n \in \ZM} \Hom_{D^{\GB \times \Gm}(\tgg)}(\FC, \GC \langle n \rangle) \simto \Hom_{D^{\GB}(\tgg)}(\FC, \GC).
\end{equation}
For any $\lambda \in \XB$ we denote by $\OC_{\tgg}(\lambda)$ the pullback of $\OC_{\Flag}(\lambda)$ to $\tgg$.

The geometric braid group action considered in~\S\ref{ss:braid-action-Springer} can be ``extended'' to the category $D^{\GB \times \Gm}(\tgg)$ as follows. Let $s$ be a finite simple reflection, and let $\alpha$ be the associated simple root.
Recall the associated subscheme $Z_s \subset \tgg \times_{\gg^*} \tgg$ defined in~\cite[\S\S 1.3--1.5]{br}. (In fact, $Z_s$ is the closure of the graph of the action of $s$ on the regular part of $\tgg$.) Then we denote by
\[
\TM^s_{\tgg} \colon D^{\GB \times \Gm}(\tgg) \to D^{\GB \times \Gm}(\tgg), \quad \text{resp.} \quad
\SM^s_{\tgg} \colon D^{\GB \times \Gm}(\tgg) \to D^{\GB \times \Gm}(\tgg),
\]
the Fourier--Mukai transform with kernel $\OC_{Z_s} \langle -1 \rangle$, resp.~$\OC_{Z_s}(-\rho, \rho-\alpha) \langle -1 \rangle$. By \cite[Lemma~1.5.1, Proposition~1.10.3]{br}, these functors are quasi-inverse equivalences of categories.

By~\cite[Theorem~1.6.1]{br}, there exists a right action of the group $\Baff$ on the category $D^{\GB \times \Gm}(\tgg)$, where $T_s$ acts by the functor $\TM^s_{\tgg}$ (for any finite simple reflection $s$), and where $\theta_\lambda$ acts by tensoring with the line bundle $\OC_{\tgg}(\lambda)$ (for any $\lambda \in \XB$). (See~\S\ref{ss:braid-action-Springer} for the comparison with the action of~\cite{br}.) For $b \in \Baff$, we denote by
\[
\IS_b \colon D^{\GB \times \Gm}(\tgg) \simto D^{\GB \times \Gm}(\tgg)
\]
the action of $b$ (defined up to isomorphism).

For $s$ a finite simple reflection, associated with the simple root $\alpha$, we also set $\tgg_s := \GB \times^{\PB_s} (\gg / \pg_s^{\mathrm{nil}})^*$, where $\PB_s$ is the minimal standard parabolic subgroup associated with $s$, and $\pg_s^{\mathrm{nil}}$ is the Lie algebra of the unipotent radical of $\PB_s$. There exists a natural morphism $\tpi_s \colon \tgg \to \tgg_s$. 
By~\cite[Corollary~5.3.2]{riche}
there exists natural short exact sequences
\begin{gather}
\OC_{\Delta \tgg} \langle 2 \rangle \hookrightarrow \OC_{\tgg \times_{\tgg_s} \tgg} \twoheadrightarrow \OC_{Z_s}, \label{eqn:ses-tgg-1} \\
\OC_{Z_s}(\rho-\alpha,-\rho) \hookrightarrow \OC_{\tgg \times_{\tgg_s} \tgg} \twoheadrightarrow \OC_{\Delta \tgg}\label{eqn:ses-tgg-2}
\end{gather}
in $\Db \Coh^{\GB \times \Gm}(\tgg \times \tgg)$, where in each sequence the surjection is induced by restriction of functions, and where $\Delta \tgg \subset \tgg \times \tgg$ is the diagonal copy.

Let $i \colon \tNC \hookrightarrow \tgg$ be the natural embedding. Then we have isomorphisms
\begin{equation}
\label{eqn:action-i*}
\IS_b \circ i_* \cong i_* \circ \JS_b, \qquad \Lder i^* \circ \IS_b \cong \JS_b \circ \Lder i^*
\end{equation}
for any $b \in \BM_{\mathrm{aff}}$, see~\cite[Theorem~1.6.1]{br}. We have $\IS_{T_w}(\OC_{\tgg}) \cong \OC_{\tgg} \langle -\ell(w) \rangle$ for $w \in W$, see~\cite[Proof of Lemma~2.7.2]{br}.
Therefore,
if $\lambda \in \XB$ and if $w$ is any element in $W t_\lambda$, we can define the objects
\[
\nabla^\lambda_{\tgg} := \IS_{T_w}(\OC_{\tgg}) \langle -\ell(w_\lambda) + \ell(w) \rangle, \qquad
\Delta^\lambda_{\tgg} := \IS_{(T_{w^{-1}})^{-1}}(\OC_{\tgg}) \langle -\ell(w) + \ell(w_\lambda) \rangle
\]
in $D^{\GB \times \Gm}(\tgg)$. Then, by Proposition~\ref{prop:nabla-delta-braid-group} and~\eqref{eqn:action-i*}, there exist isomorphisms
\begin{equation}
\label{eqn:delta-nabla-i^*}
\Lder i^*(\nabla^\lambda_{\tgg}) \cong \nabla^\lambda_{\tNC}, \qquad \Lder i^*(\Delta^\lambda_{\tgg}) \cong \Delta^\lambda_{\tNC}.
\end{equation}

As in Remark~\ref{rmk:standard-action}, an object is of the form $\nabla^\lambda_{\tgg} \langle m \rangle$, resp.~$\Delta^\lambda_{\tgg} \langle m \rangle$, iff it is isomorphic to $\IS_{T_w}(\OC_{\tgg}) \langle p \rangle$, resp.~$\IS_{T_w^{-1}}(\OC_{\tgg}) \langle p \rangle$, for some $w \in \Waff$ and $p \in \ZM$.

\subsection{A ``Bott--Samelson type'' construction of tilting objects}
\label{ss:comparison}

If $s$ is a finite simple reflection,
we denote by 
\[
\Xi_s \colon D^{\GB \times \Gm}(\tgg) \to D^{\GB \times \Gm}(\tgg)
\]
the Fourier--Mukai transform associated with the kernel $\OC_{\tgg \times_{\tgg_s} \tgg} \langle -1 \rangle$. We can make a similar definition for $s_0$ an affine simple reflection: we choose (once and for all) a finite simple reflection $t$ and an element $b \in \BM_{\mathrm{aff}}$ such that $T_{s_0} = b T_t b^{-1}$ (see the proof of Proposition~\ref{prop:exactness} for existence), and set $\Xi_{s_0}:=\IS_{b^{-1}} \circ \Xi_t \circ \IS_b$. 

Using exact sequences~\eqref{eqn:ses-tgg-1} and~\eqref{eqn:ses-tgg-2}, for any simple reflection $s$, we obtain distinguished triangles of endofunctors of $D^{\GB \times \Gm}(\tgg)$:
\begin{gather}
\id \langle 1 \rangle \to \Xi_s \to \IS_{T_s} \xrightarrow{[1]}, \label{eqn:triangle-reflection-tgg-1} \\
\IS_{T_s^{-1}} \to \Xi_s \to \id \langle -1 \rangle \xrightarrow{[1]}. \label{eqn:triangle-reflection-tgg-2}
\end{gather}
(More precisely, if $s$ is finite then the distinguished triangles are obtained directly from~\eqref{eqn:ses-tgg-1} and~\eqref{eqn:ses-tgg-2}.
If $s$ is affine, with $T_s=b T_t b^{-1}$ as above, the distinguished triangles are obtained from those for $t$ by conjugation by $\IS_{b^{-1}}$.)

If $\mathsf{D}$ is any triangulated category, and if $X_1, \cdots, X_n$ are objects in $\mathsf{D}$, we will say that an object $X$ of $\mathsf{D}$ admits a ``filtration'' with ``subquotients'' $X_1, \cdots, X_n$ if $[X]$ belongs to $[X_1] * \cdots * [X_n]$. (Here, $[Y]$ is the isomorphism class of the object $Y$, and we use the ``$*$'' operation of~\cite[\S 1.3.9]{bbd}.)

\begin{lem}
\label{lem:Xi-standard-costandard-tgg}
If an object $\FC$ of $D^{\GB \times \Gm}(\tgg)$ admits a ``filtration'' with ``subquotients'' of the form $\nabla^\lambda_{\tgg} \langle m \rangle$ (resp.~$\Delta^\lambda_{\tgg} \langle m \rangle$) for $\lambda \in \XB$ and $m \in \ZM$, then so do the objects $\Xi_s(\FC)$ for any simple reflection $s \in \Waff$, and the objects $\IS_{T_\omega}(\FC)$ for any $\omega \in \Omega$.
\end{lem}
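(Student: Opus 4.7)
The plan is to reduce to the case where $\FC$ is a single (co)standard object, then to directly compute the effect of $\IS_{T_\omega}$ or $\Xi_s$ on such an object using the characterization recorded just after~\eqref{eqn:delta-nabla-i^*}. Since any triangulated functor is compatible with the ``$*$'' operation and since two consecutive ``filtrations'' can be concatenated, this reduction is automatic: I may assume $\FC \cong \IS_{T_w}(\OC_{\tgg})\langle p\rangle$ (costandard case) or $\FC \cong \IS_{T_w^{-1}}(\OC_{\tgg})\langle p\rangle$ (standard case) for some $w \in \Waff$ and $p \in \ZM$, and the goal becomes to show that the image admits a short ``filtration'' by objects again of this form.

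For $\IS_{T_\omega}$ with $\omega \in \Omega$ the argument is purely formal. Because $\Omega$ is length-preserving under both left and right multiplication in $\Waff$, the braid-group identities $T_w T_\omega = T_{w\omega}$ and $T_{\omega^{-1}} T_w = T_{\omega^{-1} w}$ hold in $\Baff$; combined with $T_\omega^{-1} = T_{\omega^{-1}}$ and the right-action identity $\IS_{T_\omega} \circ \IS_{T_w} \cong \IS_{T_w T_\omega}$, this gives
\[
\IS_{T_\omega}(\IS_{T_w}(\OC_{\tgg})) \cong \IS_{T_{w\omega}}(\OC_{\tgg}), \qquad \IS_{T_\omega}(\IS_{T_w^{-1}}(\OC_{\tgg})) \cong \IS_{T_{\omega^{-1} w}^{-1}}(\OC_{\tgg}),
\]
each of which is again of the required form.

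For $\Xi_s$ the plan is to apply to $\FC$ one of the triangles~\eqref{eqn:triangle-reflection-tgg-1} or~\eqref{eqn:triangle-reflection-tgg-2}, chosen according to the length comparison between $w$ and $ws$ (resp.~$sw$). In the costandard case, if $\ell(ws) > \ell(w)$ then $T_w T_s = T_{ws}$, and~\eqref{eqn:triangle-reflection-tgg-1} produces a triangle whose two ``subquotients'' are $\FC\langle 1\rangle$ and $\IS_{T_{ws}}(\OC_{\tgg})\langle p\rangle$; if $\ell(ws) < \ell(w)$ then $T_w = T_{ws} T_s$, so $T_w T_s^{-1} = T_{ws}$, and~\eqref{eqn:triangle-reflection-tgg-2} produces ``subquotients'' $\IS_{T_{ws}}(\OC_{\tgg})\langle p\rangle$ and $\FC\langle -1\rangle$. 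In both sub-cases the two ``subquotients'' are costandard. The standard case is entirely symmetric, using the length comparison between $sw$ and $w$ together with the identities $T_s T_w = T_{sw}$ (if $\ell(sw) > \ell(w)$) or $T_w = T_s T_{sw}$ (if $\ell(sw) < \ell(w)$) to produce, via the appropriate triangle, two ``subquotients'' of the form $\IS_{T_{sw}^{-1}}(\OC_{\tgg})\langle \cdot\rangle$ and $\FC\langle \pm 1\rangle$, both standard. The argument applies uniformly when $s$ is an affine simple reflection, since~\eqref{eqn:triangle-reflection-tgg-1} and~\eqref{eqn:triangle-reflection-tgg-2} hold in that case too by conjugation, as noted just after their statement.

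The only real ``obstacle'' is bookkeeping: one must match each length case to the triangle that turns the product $T_w T_s^{\pm 1}$ (or $T_w^{-1} T_s^{\pm 1}$) into a length-additive product, so that no stray $T_s^{\pm 2}$ survives and the result is again of the form $T_v$ (resp.~$T_v^{-1}$); beyond this the argument is a direct computation in $\Baff$.
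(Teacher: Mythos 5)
Your proof is correct and follows essentially the same approach as the paper: reduce to a single (co)standard object $\IS_{T_w^{\pm 1}}(\OC_{\tgg})\langle p\rangle$, then apply the appropriate one of the two distinguished triangles~\eqref{eqn:triangle-reflection-tgg-1}, \eqref{eqn:triangle-reflection-tgg-2} according to whether right multiplication by $s$ increases or decreases the length of $w$. The only difference is that you spell out the easy $\IS_{T_\omega}$ case (via length-additivity for length-$0$ elements and $T_\omega^{-1}=T_{\omega^{-1}}$), which the paper leaves to the reader.
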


\begin{proof}
The case of $\IS_{T_\omega}$ is easy, and left to the reader. To prove the claim about the functor $\Xi_s$,
it suffices to prove that any object of the form $\Xi_s(\nabla^\lambda_{\tgg})$, resp.~ $\Xi_s(\Delta^\lambda_{\tgg})$, admits a ``filtration'' with ``subquotients'' of the form $\nabla^\mu_{\tgg} \langle m \rangle$ (resp.~$\Delta^\mu_{\tgg} \langle m \rangle$). We treat the case of $\nabla^\lambda_{\tgg}$; the case of $\Delta^\lambda_{\tgg}$ is similar.

By definition, there exists $w \in \Waff$ and $m \in \ZM$ such that $\nabla^\lambda_{\tgg} \cong \IS_{T_w}(\OC_{\tgg}) \langle m \rangle$. If $ws>w$, then $T_{ws}=T_w T_s$. Using triangle~\eqref{eqn:triangle-reflection-tgg-1} we obtain a distinguished triangle
\[
\nabla^\lambda_{\tgg} \langle 1 \rangle \to \Xi_s(\nabla^\lambda_{\tgg}) \to \IS_{T_{ws}}(\OC_{\tgg}) \langle m \rangle \xrightarrow{[1]}.
\]
Since the third term is isomorphic to $\nabla^\mu_{\tgg} \langle p \rangle$ for some $\mu \in \XB$ and $p \in \ZM$, this proves the claim in this case.

If $ws<w$ then $T_w T_s^{-1} = T_{ws}$, and using triangle~\eqref{eqn:triangle-reflection-tgg-2} we obtain a distinguished triangle
\[
\IS_{T_{ws}}(\OC_{\tgg}) \langle m \rangle \to \Xi_s(\nabla^\lambda_{\tgg}) \to \nabla^\lambda_{\tgg} \langle - 1 \rangle \xrightarrow{[1]}.
\]
As above the first term is isomorphic to $\nabla^\mu_{\tgg} \langle p \rangle$ for some $\mu \in \XB$ and $p \in \ZM$, hence the claim is proved in this case also.
\end{proof}


\begin{cor}
\label{cor:tilting-Bott-Samelson}
For any sequence $(s_1, \cdots, s_r)$ of simple reflections and for any $\omega \in \O$, the object
\[
\Lder i^* \bigl( \Xi_{s_1} \circ \cdots \circ \Xi_{s_r} \circ \IS_{T_\omega} (\OC_{\tgg}) \bigr)
\]
belongs to $\ES^{\GB \times \Gm}(\tNC)$, and is tilting therein. Moreover, any indecomposable tilting object $\TC^\lambda$ is a direct summand in an object of this form.
\end{cor}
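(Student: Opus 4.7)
\textbf{Tiltingness.} The plan is to iterate Lemma~\ref{lem:Xi-standard-costandard-tgg}, starting from the seed $\IS_{T_\omega}(\OC_{\tgg})$. Since $\omega\in\Omega$ has length zero and lies in a unique $Wt_{\mu_\omega}$, the $\tgg$-analogue of Remark~\ref{rmk:standard-action} recorded at the end of~\S\ref{ss:tgg} identifies $\IS_{T_\omega}(\OC_{\tgg})$ with a shifted standard and with a shifted costandard object on $\tgg$, and hence it trivially admits ``filtrations'' of both types. Applying Lemma~\ref{lem:Xi-standard-costandard-tgg} once for each $\Xi_{s_i}$ produces a standard ``filtration'' and a costandard ``filtration'' of $\Xi_{s_1}\circ\cdots\circ\Xi_{s_r}\circ\IS_{T_\omega}(\OC_{\tgg})$ in $D^{\GB\times\Gm}(\tgg)$. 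The triangulated functor $\Lder i^*$ transports these via the isomorphisms~\eqref{eqn:delta-nabla-i^*} to filtrations in $D^{\GB\times\Gm}(\tNC)$ by standards and costandards. Since Corollary~\ref{cor:nabla-delta-heart} places these subquotients in the heart $\ES^{\GB\times\Gm}(\tNC)$, and the heart is closed under extensions, the Bott--Samelson object lies in the heart and the ``filtrations'' are genuine filtrations, proving tiltingness.

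\textbf{Every $\TC^\lambda$ is a summand.} For each $\lambda\in\XB$ the plan is to exhibit a specific Bott--Samelson object whose standard filtration has $\Delta^\lambda_{\tNC}$ as its unique maximal term. Using the decomposition $\Waff=\Omega\ltimes\Waff^{\Cox}$, write $w_\lambda=\omega u$ with $\omega\in\Omega$ and $u=s_{i_1}\cdots s_{i_r}$ a reduced expression ($r=\ell(w_\lambda)$), and consider
\[
T_\lambda^{\Bs}:=\Lder i^*\bigl(\Xi_{s_{i_r}}\circ\cdots\circ\Xi_{s_{i_1}}\circ\IS_{T_\omega}(\OC_{\tgg})\bigr),
\]
which is tilting by the first part. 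The plan is then to extract its standard filtration inductively via the distinguished triangle~\eqref{eqn:triangle-reflection-tgg-2} (following the proof of Lemma~\ref{lem:Xi-standard-costandard-tgg}): this yields $2^r$ shifted standard subquotients. A direct braid-group computation (using the length-additive factorization $w_\lambda^{-1}=u^{-1}\omega^{-1}$) shows that the ``full-subword'' subquotient, where every $\Xi_{s_i}$ contributes its ``$\IS_{T_{s_i}^{-1}}$'' summand, equals $\Delta^\lambda_{\tNC}$ with no shift. All other subquotients come from proper subwords of the chosen reduced expression of $w_\lambda$, i.e.~from elements $v_J\in\Waff$ with $v_J<w_\lambda$ strictly in Bruhat order. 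Since $w_{\mu_J}\leq v_J<w_\lambda$ for the unique $\mu_J\in\XB$ with $v_J\in Wt_{\mu_J}$, one has $\mu_J<\lambda$ in the order on $\XB$ of~\S\ref{ss:exotic-t-structure}, hence $\mu_J<'\lambda$ because $\leq'$ refines $\leq$ within the same $\ZM\Phi$-coset (and Bruhat comparability forces the cosets to coincide). Thus $\lambda$ is the unique maximal weight appearing in the standard filtration of $T_\lambda^{\Bs}$, appearing once, so by the standard theory of graded highest weight categories (cf.~\cite[Appendix~A]{modrap2}) the indecomposable tilting $\TC^\lambda$ is a direct summand of $T_\lambda^{\Bs}$.

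\textbf{Main obstacle.} The main technical difficulty is the braid-group bookkeeping: reconciling the right-action convention for $\IS$ (so that $\IS_a\circ\IS_b=\IS_{ba}$) with the definition $\Delta^\lambda_{\tgg}=\IS_{(T_{w_\lambda^{-1}})^{-1}}(\OC_{\tgg})$, in order to identify the ``full-subword'' subquotient with $\Delta^\lambda_{\tNC}$, and then verifying that the subquotients arising from iterated uses of the triangle~\eqref{eqn:triangle-reflection-tgg-2} really are single shifted standards (rather than merely lying in the triangulated subcategory generated by standards). This requires following the inductive procedure in the proof of Lemma~\ref{lem:Xi-standard-costandard-tgg}, which at each step selects between~\eqref{eqn:triangle-reflection-tgg-1} and~\eqref{eqn:triangle-reflection-tgg-2} based on whether $ws>w$ or $ws<w$; the combinatorial outcome matches the expected subword description, with each subword giving a single $\Delta^{\mu_J}_{\tNC}\langle p_J\rangle$, and the conclusion on maximality of $\lambda$ then follows from the fact that $w_\lambda$ is minimal in its $W$-coset.
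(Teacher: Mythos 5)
Your proposal is correct and takes essentially the same route as the paper's proof: tiltingness by iterating Lemma~\ref{lem:Xi-standard-costandard-tgg} and pulling back via $\Lder i^*$ using~\eqref{eqn:delta-nabla-i^*}, and the summand claim by taking a reduced expression of $w_\lambda$ and using that the subquotients' indices are (inverses of) subwords, hence Bruhat-below $w_\lambda$. The only cosmetic differences are that you work with standard objects and $\Delta$'s where the paper works with costandard objects and $\nabla$'s, and you spell out the $2^r$-subword combinatorics more explicitly than the paper, which simply says ``it follows from Lemma~\ref{lem:Xi-standard-costandard-tgg} and its proof.''
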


\begin{proof}
The first claim is a direct consequence of Lemma~\ref{lem:Xi-standard-costandard-tgg}, using the fact that
if an object $\FC$ of $D^{\GB \times \Gm}(\tgg)$ admits a ``filtration'' with ``subquotients'' of the form $\nabla^\lambda_{\tgg} \langle m \rangle$ (resp.~$\Delta^\lambda_{\tgg} \langle m \rangle$), then $\Lder i^*(\FC)$ belongs to $\ES^{\GB \times \Gm}(\tNC)$ and admits a costandard filtration (resp.~a standard filtration), see~\eqref{eqn:delta-nabla-i^*}.

To prove the second claim choose a reduced expression $w_\lambda = \omega s_1 \cdots s_r$ (where $w_\lambda$ is as in~\S\ref{ss:exotic-t-structure}), and consider the object
\[
\TC := \Lder i^* \bigl( \Xi_{s_r} \circ \cdots \circ \Xi_{s_1} \circ \IS_{T_\omega} (\OC_{\tgg}) \bigr)
\]
Then $\TC$ is a tilting object in $\ES^{\GB \times \Gm}(\tNC)$, and it follows from Lemma~\ref{lem:Xi-standard-costandard-tgg} and its proof that $\nabla^\lambda_{\tNC}$ appears in a costandard filtration of $\TC$, and that $(\TC : \nabla^\mu_{\tNC} \langle m \rangle)=0$ unless $w_\mu \leq w_\lambda$, i.e.~unless $\mu \leq \lambda$. Hence $\TC^\lambda$ appears as a direct summand of $\TC$ (with multiplicity one).
\end{proof}

\begin{remark}
With the notation in the last paragraph of the proof of Corollary~\ref{cor:tilting-Bott-Samelson}, our arguments show that all the direct summands of the object $\TC$ which are different from $\TC^\lambda$ are of the form $\TC^\mu \langle m \rangle$ for $m \in \ZM$ and $\mu \in \XB$ which satisfies $\mu < \lambda$.
\end{remark}

\subsection{Dominant tilting objects}
\label{ss:tilting-exotic-dominant}

In this subsection we assume that $\GB$ is standard (i.e. that $p$ is a good prime for $\GB$ and $\gg$ admits a non-degenerate $\GB$-invariant bilinear form). In this case one can give an explicit description of the objects $\TC^\lambda$ for $\lambda \in \XB^+$ in terms of the indecomposable tilting $\GB$-modules $\Til(\lambda)$. This description is closely related to the description of tilting perverse coherent sheaves on $\NC$ obtained in~\cite{minn}.
Similar computations are also considered in~\cite[\S 4.2]{achar2}.

\begin{remark}
The assumption that $\GB$ is standard seems to be important in the results that appear below. (For instance, these results are used in~\cite{mr} in relation with the Mirkovi{\'c}--Vilonen conjecture, which is known to be false in bad characteristic.) In fact we expect that  without these assumptions one would have to replace the cotangent bundle $\tNC$ by the variety $\GB \times^\BB \UB$ as in~\cite{klt}. But there is no obvious counterpart of our $\Gm$-action on this variety, which creates complications that we do not consider here.
\end{remark}

We need to introduce more notation. 
For $\lambda \in \XB^+$, we set
\[
\St(\lambda) := \bigl( \mathsf{Ind}_\BB^\GB(-w_0 \lambda) \bigr)^*, \qquad \Cost(\lambda) := \mathsf{Ind}_\BB^\GB(\lambda).
\]
Then the category of finite dimensional $\GB$-modules is a highest weight category, with standard, resp.~costandard, objects $\St(\lambda)$, resp.~$\Cost(\lambda)$, for $\lambda \in \XB^+$. Recall that we say that a finite dimensional $\GB$-module $V$ \emph{admits a good filtration} if it admits a filtration with subquotients of the form $\Cost(\nu)$ where $\nu \in \XB^+$. In this case the number of times $\Cost(\nu)$ appears in such a filtration does not depend on the choice of filtration, and is denoted $(V : \Cost(\nu))$. Dually, we say that $V$ \emph{admits a Weyl filtration} if it admits a filtration with subquotients $\St(\nu)$ where $\nu \in \XB^+$; in this case we denote by $(V : \St(\nu))$ the corresponding multiplicity.
For $\lambda \in \XB^+$, we denote by $\Til(\lambda)$ the indecomposable tilting $\GB$-module with highest weight $\lambda$.

We will also use ``Lusztig's $q$-analogue,'' which is defined as follows (see~\cite{lusztig}). First we define Kostant's partition function via the equality
\[
\prod_{\alpha \in \Phi_+} \frac{1}{1-\vv e^{\alpha}} = \sum_{\lambda \in \XB} \PC_\lambda(\vv) e^\lambda
\]
in a suitable completion of the group ring of $\XB$ over $\ZM[\vv,\vv^{-1}]$ (where $\vv$ is an indeterminate). Then we set
\[
\MC_\lambda^\mu(\vv) = \sum_{w \in W} (-1)^{\ell(w)} \PC_{w(\lambda+\rho)-(\mu+\rho)}(\vv).
\]
Note that we have, for $\lambda, \mu \in \XB^+$,
\begin{equation}
\label{eqn:q-analogue}
\MC_\lambda^\mu(\vv) \neq 0 \quad \Rightarrow \quad \bigl( \mu \preceq \lambda \bigr).
\end{equation}
Finally, if $\mu \in \XB$, we denote by $\dom(\mu)$ the unique dominant weight in the $W$-orbit of $\mu$.

The following theorem collects some well-known (though difficult) results. Here, for a $\GB \times \Gm$-module $V$, we denote by $V_k$ the sub-$\GB$-module of $V$ on which $\Gm$ acts via $z \mapsto z^k$.

\begin{thm}
\label{thm:line-bundles-tNC}
Assume $\GB$ is standard, and let $\lambda \in \XB^+$.
\begin{enumerate}
\item
\label{it:vanishing}
We have 
\[
\mathsf{H}^i(\tNC, \OC_{\tNC}(\lambda))=0 \quad \text{for $i>0$.}
\]
\item
\label{it:character}
For any $k \in \ZM$,
the $\GB$-module $\Gamma(\tNC, \OC_{\tNC}(\lambda))_k$ admits a good filtration. Moreover, for $\nu \in \XB^+$ we have
\[
\sum_{k \in \ZM} \bigl( \Gamma(\tNC, \OC_{\tNC}(\lambda))_k : \Cost(\nu) \bigr) \cdot \vv^k = \MC_\nu^\lambda(\vv^2).
\]
\end{enumerate}
\end{thm}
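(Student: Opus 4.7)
The plan is to reduce both statements to assertions about $\BB$-modules via the fundamental isomorphism $\tNC \cong \GB \times^{\BB} (\gg/\bg)^*$, and then appeal to the deep results of Andersen--Jantzen, Hesselink, Kumar--Lauritzen--Thomsen, and Broer which are precisely designed for this situation.

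The first step is the standard identification: using the projection $\pi \colon \tNC \to \Flag$ and the fact that $\pi_* \OC_{\tNC} \cong \GB \times^\BB \mathrm{S}(\gg/\bg)$, together with the projection formula, one obtains a $\GB$-equivariant (and $\Gm$-graded) isomorphism
\[
H^i(\tNC, \OC_{\tNC}(\lambda)) \cong R^i \mathsf{Ind}_\BB^\GB \bigl( \FM_\lambda \otimes \mathrm{S}(\gg/\bg) \bigr),
\]
where the $\Gm$-grading on the right hand side is twice the symmetric degree (because of our convention that $t \in \Gm$ acts on $(\gg/\bg)^*$ by $t^{-2}$). This reduces everything to computations in $\Rep(\BB)$ and $\Rep(\GB)$.

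For part \eqref{it:vanishing}, the vanishing $R^i \mathsf{Ind}_\BB^\GB(\FM_\lambda \otimes \mathrm{S}(\gg/\bg)) = 0$ for $i > 0$ and $\lambda \in \XB^+$ is exactly the Andersen--Jantzen vanishing theorem (in the equivalent formulation for cohomology of line bundles on $\tNC$); under the standard hypotheses on $\GB$ this follows from the Frobenius splitting of $\tNC$ compatible with the zero section constructed by Kumar--Lauritzen--Thomsen, which then implies the required vanishing by the standard cohomological consequences of Frobenius splitting together with Kempf's vanishing on $\Flag$. For part \eqref{it:character}, the good filtration on each graded piece $\Gamma(\tNC, \OC_{\tNC}(\lambda))_k$ follows from the same Frobenius-splitting argument (the splitting is compatible enough to yield a filtration whose subquotients are dual Weyl modules), and the explicit multiplicity formula is Hesselink's formula, extended to positive characteristic (under the standard hypotheses) by Broer and Kumar--Lauritzen--Thomsen.

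Once part \eqref{it:vanishing} is in hand, one can alternatively derive the precise multiplicity formula of part \eqref{it:character} by a direct character computation: since higher cohomology vanishes, the graded $\GB$-character of $\Gamma(\tNC, \OC_{\tNC}(\lambda))$ equals the Euler characteristic $\chi \bigl( \FM_\lambda \otimes \mathrm{S}(\gg/\bg) \bigr)$, which by the Weyl character formula (applied to each graded piece) unfolds into the alternating sum of partition-function values that is precisely the definition of $\MC_\nu^\lambda(\vv^2)$; the vanishing \eqref{eqn:q-analogue} of $\MC_\nu^\lambda$ outside the convex hull condition then guarantees that the terms on the right are honest multiplicities of $\Cost(\nu)$'s rather than virtual ones, which, combined with the good filtration statement, gives the desired equality. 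The main obstacle is that all the nontrivial content (vanishing, good filtration, Frobenius splitting) genuinely requires the standard hypotheses on $\GB$ and is not proved here but quoted; without these hypotheses the theorem fails, which is why the hypothesis appears in this subsection but not the previous ones.
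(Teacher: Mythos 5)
Your proposal is correct and follows essentially the same route as the paper: both part~\eqref{it:vanishing} and the good-filtration statement in part~\eqref{it:character} are imported from Kumar--Lauritzen--Thomsen (the paper cites \cite[Theorems~2 and~7]{klt}), and the multiplicity formula is obtained via the Euler-characteristic argument you sketch at the end --- this is precisely Brylinski's proof of \cite[Lemma~6.1]{brylinski}, which the paper cites directly, remarking only that Borel--Weil--Bott must be replaced by Jantzen's Euler-characteristic formula in positive characteristic. One small correction: the role you assign to~\eqref{eqn:q-analogue} is not quite right. That positivity/support property is never used in the proof of this theorem; what actually converts the Euler-characteristic identity into the stated multiplicity formula is the linear independence of the Weyl characters $\ch \Cost(\nu)$ for $\nu \in \XB^+$, combined with the already-established good filtration (so that the character coefficients are the filtration multiplicities). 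The estimate~\eqref{eqn:q-analogue} is only used later, in the proof of Corollary~\ref{cor:tilting-exotic-dominant}.
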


\begin{proof}
Statement~\eqref{it:vanishing} is proved in~\cite[Theorem~2]{klt}.
The first statement in~\eqref{it:character} is proved in~\cite[Theorem~7]{klt}.
And then the second statement follows from~\cite[Lemma~6.1]{brylinski}. (In~\cite{brylinski} it is assumed that the base field has characteristic $0$. However, the proof also applies in positive characteristic, replacing the reference to the Borel--Weil--Bott theorem by a reference to~\cite[\S II.5.9, Equation (1)]{jantzen}).
\end{proof}

In the following statements we use the fact that for any finite dimensional $\GB$-module $V$, the object $V \otimes \OC_{\tNC}$ belongs to $\EC^{\GB \times \Gm}(\tNC)$, see Proposition~\ref{prop:objects-exotic}\eqref{it:free-exotic}.

\begin{prop}
\label{prop:Weyl-standard}
Assume $\GB$ is standard, and
let $V$ be a finite dimensional $\GB$-module which admits a Weyl filtration. Then the object $V \otimes \OC_{\tNC} \in \EC^{\GB \times \Gm}(\tNC)$ admits a standard filtration. Moreover, for any $\mu \in \XB$ we have
\[
\sum_{i \in \ZM} (V \otimes \OC_{\tNC} : \Delta_{\tNC}^\mu \langle i \rangle) \cdot \vv^i= \vv^{-\delta(\mu)} \cdot \sum_{\nu \in \XB^+} (V : \St( \nu)) \cdot \MC^{\dom(\mu)}_\nu(\vv^{-2}).
\]
\end{prop}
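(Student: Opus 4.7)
The plan is to show both claims by computing $\Ext^k(V \otimes \OC_{\tNC}, \nabla^\mu_{\tNC} \langle i \rangle)$ for all $k \geq 0$ and then applying standard graded-highest-weight-category technology. Since $V \otimes \OC_{\tNC}$ lies in $\EC^{\GB \times \Gm}(\tNC)$ by Proposition~\ref{prop:objects-exotic}\eqref{it:free-exotic}, vanishing of $\Ext^{>0}$ against every costandard object implies the existence of a standard filtration (cf.~\cite[Appendix~A]{modrap2}), and then the orthogonality~\eqref{eqn:Hom-vanishing-DN} gives $(V \otimes \OC_{\tNC} : \Delta^\mu_{\tNC} \langle i \rangle) = \dim \Hom(V \otimes \OC_{\tNC}, \nabla^\mu_{\tNC} \langle i \rangle)$.

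To carry out the computation, I would use the description $\nabla^\mu_{\tNC} \cong \JS_{(T_{v^{-1}})^{-1}}(\OC_{\tNC}(\dom(\mu)))$, with $v \in W$ of minimal length such that $v\mu = \dom(\mu)$ is dominant (so $\ell(v) = \delta(\mu)$), which is extracted from the proof of Proposition~\ref{prop:nabla-delta-braid-group}. The functor $\JS_{T_{v^{-1}}}$ is an autoequivalence quasi-inverse to $\JS_{(T_{v^{-1}})^{-1}}$, and it commutes with $V \otimes (-)$ by the projection formula (its defining Fourier--Mukai kernel is $\GB$-equivariant). Combining this adjunction with~\eqref{eqn:J-O} and with the adjunction between $V \otimes (-)$ and $V^* \otimes (-)$ reduces the computation to
\[
\Ext^k(V \otimes \OC_{\tNC}, \nabla^\mu_{\tNC} \langle i \rangle) \cong \Hom_{\GB}\bigl(V, R^k\Gamma(\tNC, \OC_{\tNC}(\dom(\mu)))_\bullet\bigr),
\]
where the $\Gm$-weight $\bullet$ is determined by $i$ and the shift $\delta(\mu)$. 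Theorem~\ref{thm:line-bundles-tNC}\eqref{it:vanishing} gives vanishing for $k > 0$, establishing the existence of the standard filtration.

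For the multiplicity formula, Theorem~\ref{thm:line-bundles-tNC}\eqref{it:character} tells us that each $\Gm$-weight space $\Gamma(\tNC, \OC_{\tNC}(\dom(\mu)))_k$ admits a good filtration, so together with the Weyl filtration of $V$ and the $\Ext$-orthogonality between Weyl and good-filtered $\GB$-modules we obtain
\[
\dim \Hom_\GB(V, \Gamma(\tNC, \OC_{\tNC}(\dom(\mu)))_k) = \sum_\nu (V : \St(\nu)) \cdot (\Gamma(\tNC, \OC_{\tNC}(\dom(\mu)))_k : \Cost(\nu)).
\]
Summing with $\vv^i$, the shift by $\delta(\mu)$ in the $\Gm$-grading produces the $\vv^{-\delta(\mu)}$ prefactor, while the inversion of the grading index in the sum (introduced by passing from $\Hom(\OC_{\tNC}, -\langle j \rangle)$ to the graded piece of global sections) converts the $\vv^2$ of Theorem~\ref{thm:line-bundles-tNC}\eqref{it:character} into the $\vv^{-2}$ of the claimed identity. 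The main obstacle is keeping track of the $\Gm$-grading conventions consistently across the adjunctions; the geometric and representation-theoretic inputs are otherwise straightforward given the results already proved.
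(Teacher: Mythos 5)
Your overall strategy matches the paper's proof (reduce via $\nabla^\mu_{\tNC} \cong \JS_{(T_{v^{-1}})^{-1}}(\OC_{\tNC}(\dom(\mu)))$, use commutation with $V \otimes (-)$, pass to global sections, then use Theorem~\ref{thm:line-bundles-tNC}), but there is a genuine gap in the argument that the standard filtration exists. The claimed isomorphism
\[
\Ext^k(V \otimes \OC_{\tNC}, \nabla^\mu_{\tNC}\langle i\rangle) \cong \Hom_\GB\bigl(V, \mathsf{R}^k\Gamma(\tNC, \OC_{\tNC}(\dom(\mu)))_\bullet\bigr)
\]
is not correct as stated. After the adjunctions one has
\[
\Ext^k(V \otimes \OC_{\tNC}, \nabla^\mu_{\tNC}\langle i\rangle) \cong \Hom^k_{\Db\Rep(\GB\times\Gm)}\bigl(V, \Rder\Gamma(\tNC, \OC_{\tNC}(\dom(\mu)))\langle i+\delta(\mu)\rangle\bigr),
\]
and \emph{after} using Theorem~\ref{thm:line-bundles-tNC}\eqref{it:vanishing} to see that $\Rder\Gamma$ sits in degree $0$, this becomes
\[
\Ext^k_{\Rep(\GB)}\bigl(V, \Gamma(\tNC, \OC_{\tNC}(\dom(\mu)))_{-i-\delta(\mu)}\bigr).
\]
In particular the higher $\Ext$'s live in $\Rep(\GB)$, not in the fibers of some $\mathsf{R}^k\Gamma$. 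Consequently, vanishing for $k>0$ does \emph{not} follow from Theorem~\ref{thm:line-bundles-tNC}\eqref{it:vanishing} alone: you must also invoke Theorem~\ref{thm:line-bundles-tNC}\eqref{it:character} to know that each graded piece $\Gamma(\tNC, \OC_{\tNC}(\dom(\mu)))_j$ admits a good filtration, and then pair it with the Weyl filtration of $V$ and the $\Ext$-orthogonality between Weyl-filtered and good-filtered $\GB$-modules. You do use exactly this input in your multiplicity computation, so the fix is simply to bring it forward to the filtration argument as well: cohomology vanishing on $\tNC$ handles the geometric degree, but the good/Weyl orthogonality is what kills $\Ext^k_{\Rep(\GB)}$ for $k>0$. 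The rest of your argument (the reduction via $\JS_{(T_{v^{-1}})^{-1}}$, the use of~\eqref{eqn:J-O}, the bookkeeping of the $\Gm$-grading shift $\delta(\mu)$, and the passage from part~\eqref{it:character} of Theorem~\ref{thm:line-bundles-tNC} to the stated generating function) is the same as in the paper.
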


\begin{proof}
Let $\mu \in \XB$, and let $v \in W$ be of minimal length such that $v \mu=\dom(\mu)$. Then we have seen in the proof of Proposition~\ref{prop:nabla-delta-braid-group} that
\[
\nabla^\mu_{\tNC} \cong \JS_{(T_{v^{-1}})^{-1}} \bigl( \OC_{\tNC}(v\mu) \bigr).
\]
We deduce that for $i,j \in \ZM$ we have
\begin{multline*}
\Hom^i_{D^{\GB \times \Gm}(\tNC)}(V \otimes \OC_{\tNC}, \nabla^\mu_{\tNC} \langle j \rangle) \cong \\
\Hom^i_{D^{\GB \times \Gm}(\tNC)} \Bigl(V \otimes \OC_{\tNC},  \JS_{(T_{v^{-1}})^{-1}} \bigl( \OC_{\tNC}(v\mu) \bigr) \langle j \rangle \Bigr) \cong \\
\Hom^i_{D^{\GB \times \Gm}(\tNC)} \Bigl( \JS_{T_{v^{-1}}} \bigl( V \otimes \OC_{\tNC} \bigr), \OC_{\tNC}(v\mu) \langle j \rangle \Bigr) \cong \\
\Hom^i_{D^{\GB \times \Gm}(\tNC)} \bigl( V \otimes \OC_{\tNC}, \OC_{\tNC}(v\mu) \langle j+\ell(v) \rangle \bigr),
\end{multline*}
where the last isomorphism uses~\eqref{eqn:J-O}. Now we observe that
\begin{multline*}
\Hom^i_{D^{\GB \times \Gm}(\tNC)} \bigl( V \otimes \OC_{\tNC}, \OC_{\tNC}(v\mu) \langle j+\ell(v) \rangle \bigr) \cong \\
\Hom^i_{\Db \Rep(\GB \times \Gm)} \bigl(V, \Rder\Gamma(\tNC, \OC_{\tNC}(v\mu)) \langle j+\ell(v) \rangle \bigr).
\end{multline*}
Using Theorem~\ref{thm:line-bundles-tNC}\eqref{it:vanishing}, we finally obtain that
\begin{equation}
\label{eqn:Hom-tilt-nabla}
\begin{split}
& \Hom^i_{D^{\GB \times \Gm}(\tNC)}(V \otimes \OC_{\tNC}, \nabla^\mu_{\tNC} \langle j \rangle) \cong \\
& \qquad \qquad \qquad \qquad \Ext^i_{\Rep(\GB)} \bigl( V, \Gamma(\tNC, \OC_{\tNC}(\dom(\mu)))_{-j-\delta(\mu)} \bigr).
\end{split}
\end{equation}
Since $V$ admits a Weyl filtration and $\Gamma(\tNC, \OC_{\tNC}(\dom(\mu)))_{-j-\delta(\mu)}$ admits a good filtration (see Theorem~\ref{thm:line-bundles-tNC}\eqref{it:character}), this implies that
\[
\Hom^i_{D^{\GB \times \Gm}(\tNC)}(V \otimes \OC_{\tNC}, \nabla^\mu_{\tNC} \langle j \rangle)=0 \qquad \text{for $i>0$.}
\]
By the general theory of graded highest weight categories, this condition implies that $V \otimes \OC_{\tNC}$ admits a standard filtration.

To compute the multiplicities, we observe that
\[
 \sum_{i \in \ZM} (V \otimes \OC_{\tNC} : \Delta^\mu_{\tNC} \langle i \rangle) \cdot \vv^i
= \sum_{i \in \ZM} \dim \bigl( \Hom(V \otimes \OC_{\tNC}, \nabla^\mu_{\tNC} \langle i \rangle) \bigr) \cdot \vv^i .
\]
Using~\eqref{eqn:Hom-tilt-nabla}, we deduce that this sum equals
\[
\sum_{\substack{\nu \in \XB^+ \\ i \in \ZM}} (V : \St(\nu)) \cdot (\Gamma(\tNC, \OC_{\tNC}(\dom(\mu)))_{-i-\delta(\mu)} : \Cost(\nu)) \cdot \vv^i.
\]
Then the formula follows from Theorem~\ref{thm:line-bundles-tNC}\eqref{it:character}.
\end{proof}

We also have a ``dual'' statement, as follows.

\begin{prop}
\label{prop:good-costandard}
Assume $\GB$ is standard, and
let $V$ be a finite dimensional $\GB$-module which admits a good filtration. Then the object $V \otimes \OC_{\tNC} \in \EC^{\GB \times \Gm}(\tNC)$ admits a costandard filtration. Moreover, for any $\mu \in \XB$ we have
\[
\sum_{i \in \ZM} (V \otimes \OC_{\tNC} : \nabla_{\tNC}^\mu \langle i \rangle) \cdot \vv^i = \vv^{\delta(\mu)} \cdot \sum_{\nu \in \XB^+} (V : \Cost(-w_0 \nu)) \cdot \MC^{\dom(-\mu)}_\nu(\vv^{2}).
\]
\end{prop}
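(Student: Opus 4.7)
The plan is to mirror the proof of Proposition~\ref{prop:Weyl-standard}, but working with morphisms \emph{out of} standard objects instead of \emph{into} costandard objects. First, let $v \in W$ be the minimal-length element such that $v\mu$ is antidominant, so that $\ell(v)=\delta(-\mu)$ and $-v\mu=\dom(-\mu)$. Applying the Bernstein-presentation argument used for the second isomorphism in the proof of Proposition~\ref{prop:nabla-delta-braid-group}, combined with Proposition~\ref{prop:Delta}\eqref{it:Delta-antidom} and the normalization $\Delta^\mu_{\tNC}=\Delta^\mu_0\langle-\delta(\mu)\rangle$, one obtains
\[
\Delta^\mu_{\tNC} \cong \JS_{(T_{v^{-1}})^{-1}}\bigl(\OC_{\tNC}(v\mu)\bigr)\langle -\ell(v)-\delta(\mu)\rangle.
\]

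Using this expression, I would compute $\Hom^i(\Delta^\mu_{\tNC}, V\otimes\OC_{\tNC}\langle-j\rangle)$ as follows: move the shift $\langle-\ell(v)-\delta(\mu)\rangle$ to the second argument; apply adjunction to convert $\JS_{(T_{v^{-1}})^{-1}}$ on the source into $\JS_{T_{v^{-1}}}$ on the target; use that $V\otimes(-)$ commutes with each $\JS_b$ (projection formula); simplify via $\JS_{T_{v^{-1}}}(\OC_{\tNC})\cong\OC_{\tNC}\langle-\ell(v)\rangle$ from~\eqref{eqn:J-O}; and finally tensor by $\OC_{\tNC}(-v\mu)$. The $\ell(v)$-shifts cancel, leaving
\[
\Hom^i(\Delta^\mu_{\tNC}, V\otimes\OC_{\tNC}\langle-j\rangle) \cong \Hom^i\bigl(\OC_{\tNC}, V\otimes \OC_{\tNC}(\dom(-\mu))\langle-j+\delta(\mu)\rangle\bigr).
\]
As in the proof of Proposition~\ref{prop:Weyl-standard}, Theorem~\ref{thm:line-bundles-tNC}\eqref{it:vanishing}, reduction to $\Rep(\GB)$, and Frobenius reciprocity $\Hom(\FM,V\otimes M)\cong\Hom(V^*,M)$ then give
\[
\Hom^i(\Delta^\mu_{\tNC}, V\otimes\OC_{\tNC}\langle-j\rangle) \cong \Ext^i_{\Rep(\GB)}\bigl(V^*, \Gamma(\tNC,\OC_{\tNC}(\dom(-\mu)))_{j-\delta(\mu)}\bigr).
\]

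Since $V$ admits a good filtration, $V^*$ admits a Weyl filtration with $(V^*:\St(\nu))=(V:\Cost(-w_0\nu))$; on the other hand, $\Gamma(\tNC,\OC_{\tNC}(\dom(-\mu)))_{j-\delta(\mu)}$ admits a good filtration by Theorem~\ref{thm:line-bundles-tNC}\eqref{it:character}. The classical Ext-vanishing between Weyl-filtered and good-filtered $\GB$-modules then forces the Ext above to vanish for $i>0$, so by the general theory of graded highest weight categories $V\otimes\OC_{\tNC}$ admits a costandard filtration. The multiplicity formula follows from $(V\otimes\OC_{\tNC}:\nabla^\mu_{\tNC}\langle i\rangle)=\dim\Hom(\Delta^\mu_{\tNC},V\otimes\OC_{\tNC}\langle-i\rangle)$, the counting identity $\dim\Hom(V^*,N)=\sum_\nu(V^*:\St(\nu))(N:\Cost(\nu))$, the change of variable $k=i-\delta(\mu)$ (which produces the overall factor $\vv^{\delta(\mu)}$), and Theorem~\ref{thm:line-bundles-tNC}\eqref{it:character} to identify the remaining sum as $\MC^{\dom(-\mu)}_\nu(\vv^2)$.

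The main obstacle is the careful bookkeeping of graded shifts: one must verify that the $\ell(v)=\delta(-\mu)$ shift from the antidominant presentation of $\Delta^\mu_{\tNC}$ cancels exactly against the $\ell(v)$-shift produced by~\eqref{eqn:J-O}, so that only $\delta(\mu)$ survives in the exponent of $\vv$ in the final formula. Once this tracking is done correctly, the remainder is a mechanical dualization of the proof of Proposition~\ref{prop:Weyl-standard}.
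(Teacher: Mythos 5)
Your proposal is correct and takes essentially the same approach as the paper's proof, which also dualizes Proposition~\ref{prop:Weyl-standard} via the key isomorphism \eqref{eqn:Hom-tilt-delta}; you have simply carried out more explicitly the grading bookkeeping (cancellation of $\ell(v)=\delta(-\mu)$, leaving $\delta(\mu)$) that the paper leaves as "one can check."
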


\begin{proof}
The proof is similar to that of Proposition~\ref{prop:Weyl-standard}. In fact, one can check that for $\mu \in \XB$ we have
\begin{equation}
\label{eqn:Hom-tilt-delta}
\begin{split}
& \Hom^i_{D^{\GB \times \Gm}(\tNC)}(\Delta^\mu_{\tNC} \langle j \rangle, V \otimes \OC_{\tNC}) \cong \\
& \qquad \qquad \qquad \qquad \Ext^i_{\Rep(\GB)} \bigl( V^*, \Gamma(\tNC, \OC_{\tNC}(\dom(-\mu)))_{j-\delta(\mu)} \bigr).
\end{split}
\end{equation}
Since $V^*$ admits a Weyl filtration and $\Gamma(\tNC, \OC_{\tNC}(\dom(-\mu)))_{j-\delta(\mu)}$ admits a good filtration (see Theorem~\ref{thm:line-bundles-tNC}\eqref{it:character}), it follows that
\[
\Hom^i_{D^{\GB \times \Gm}(\tNC)}(\Delta^\mu_{\tNC} \langle j \rangle, V \otimes \OC_{\tNC})=0 \quad \text{for $i > 0$},
\]
which implies that $V \otimes \OC_{\tNC}$ admits a costandard filtration.

The formula for the multiplicities follows from~\eqref{eqn:Hom-tilt-delta} and Theorem~\ref{thm:line-bundles-tNC}\eqref{it:character}.
\end{proof}

The following easy corollary turns out to be a key ingredient of our proof that spherical parity sheaves on affine Grassmannians are perverse in~\cite{mr}.

\begin{cor}
\label{cor:tilting-exotic-dominant}
Assume $\GB$ is standard. Then for all $\lambda \in \XB^+$ we have 
\[
\TC^\lambda \cong \Til(\lambda) \otimes \OC_{\tNC}.
\] 
\end{cor}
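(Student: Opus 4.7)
The plan is to proceed in three steps: verify that $\Til(\lambda) \otimes \OC_{\tNC}$ is tilting, identify $\TC^\lambda$ as a direct summand of multiplicity one, and show there are no further summands.

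Since $\Til(\lambda)$ is a tilting $\GB$-module, it admits both a Weyl filtration and a good filtration, so Propositions~\ref{prop:Weyl-standard} and~\ref{prop:good-costandard} show that $\Til(\lambda) \otimes \OC_{\tNC}$ lies in $\EC^{\GB \times \Gm}(\tNC)$ and admits both a standard and a costandard filtration, i.e.~is tilting. Applying Proposition~\ref{prop:Weyl-standard} with $V = \Til(\lambda)$ and $\mu = \lambda$, the only $\nu$ contributing on the right-hand side is $\nu = \lambda$: indeed $(\Til(\lambda) : \St(\nu)) \neq 0$ forces $\nu \preceq \lambda$, while~\eqref{eqn:q-analogue} applied to $\MC^\lambda_\nu$ forces $\lambda \preceq \nu$; since $\MC^\lambda_\lambda(\vv^{-2}) = 1$, this yields $(\Til(\lambda) \otimes \OC_{\tNC} : \Delta^\lambda_{\tNC}) = 1$ with no $\vv$-shift. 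For any other $\mu \in \XB$, non-vanishing of some $(\Til(\lambda) \otimes \OC_{\tNC} : \Delta^\mu_{\tNC} \langle i \rangle)$ forces $\dom(\mu) \preceq \lambda$: if this inequality is strict then $\mu \in \coo(\lambda)$ and hence $\mu <' \lambda$ by construction of $\leq'$, while if $\dom(\mu) = \lambda$ then $\mu \in W\lambda$ satisfies $\mu \preceq \lambda$, which gives $\mu \leq \lambda$ (since $\leq$ and $\preceq$ coincide on $W$-orbits) and hence $\mu \leq' \lambda$. The characterization of $\TC^\lambda$ in the graded highest weight category $\EC^{\GB \times \Gm}(\tNC)$ then guarantees that $\TC^\lambda$ occurs as a direct summand of $\Til(\lambda) \otimes \OC_{\tNC}$ with multiplicity one.

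The main obstacle is the last step, which I plan to handle by proving that $\Til(\lambda) \otimes \OC_{\tNC}$ is indecomposable via the identification $\End_{\EC^{\GB \times \Gm}(\tNC)}(\Til(\lambda) \otimes \OC_{\tNC}) \cong \End_\GB(\Til(\lambda))$. Since $\Til(\lambda) \otimes \OC_{\tNC}$ lies in the heart, its endomorphism ring is $\Hom^0_{D^{\GB \times \Gm}(\tNC)}(\Til(\lambda) \otimes \OC_{\tNC}, \Til(\lambda) \otimes \OC_{\tNC})$, which by adjunction becomes $\Hom^0(\OC_{\tNC}, \End_\FM(\Til(\lambda)) \otimes_\FM \OC_{\tNC})$. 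Passing to derived global sections, using that $\End_\FM(\Til(\lambda)) \otimes_\FM \OC_{\tNC}$ is a free $\OC_{\tNC}$-module together with the vanishing $\mathsf{H}^i(\tNC, \OC_{\tNC}) = 0$ for $i > 0$ from Theorem~\ref{thm:line-bundles-tNC}\eqref{it:vanishing}, reduces the computation to $\Hom_{\Rep(\GB \times \Gm)}(\FM, \End_\FM(\Til(\lambda)) \otimes_\FM \Gamma(\tNC, \OC_{\tNC}))$. The $\Gm$-weight-zero subspace of $\Gamma(\tNC, \OC_{\tNC}) \cong \mathsf{Ind}_\BB^\GB \mathrm{S}(\gg/\bg)$ is $\mathsf{Ind}_\BB^\GB(\FM) = \FM$, so the Hom reduces to $\End_\FM(\Til(\lambda))^\GB = \End_\GB(\Til(\lambda))$. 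This ring is local because $\Til(\lambda)$ is indecomposable as a $\GB$-module, so $\Til(\lambda) \otimes \OC_{\tNC}$ is indecomposable in $\EC^{\GB \times \Gm}(\tNC)$; being a tilting object that contains $\TC^\lambda$ as a summand, it must coincide with $\TC^\lambda$.
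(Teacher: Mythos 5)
Your proof is correct and follows essentially the same route as the paper's: verify that $\Til(\lambda) \otimes \OC_{\tNC}$ is tilting via Propositions~\ref{prop:Weyl-standard} and~\ref{prop:good-costandard}, reduce $\End(\Til(\lambda) \otimes \OC_{\tNC})$ to $\End_\GB(\Til(\lambda))$ using the vanishing from Theorem~\ref{thm:line-bundles-tNC}\eqref{it:vanishing} and the observation that $\Gamma(\tNC, \OC_{\tNC})^{\Gm} = \FM$ to get indecomposability, and read off the multiplicity bound $\mu \leq' \lambda$ and $(\,\cdot\, : \Delta^\lambda_{\tNC})=1$ from the formula in Proposition~\ref{prop:Weyl-standard} together with~\eqref{eqn:q-analogue}. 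The only differences are cosmetic: you spell out the casework on $\dom(\mu)$ versus the paper's appeal to ``basic properties of the order $\leq$,'' and you run the two halves of the argument in the opposite order.
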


\begin{proof}
By Propositions~\ref{prop:Weyl-standard} and~\ref{prop:good-costandard}, the object $\Til(\lambda) \otimes \OC_{\tNC}$ is tilting.
Next, let us prove that this object is indecomposable in $\ES^{\GB \times \Gm}(\tNC)$. For this, it suffices to remark that
\begin{multline*}
\Hom_{D^{\GB \times \Gm}(\tNC)}(\Til(\lambda) \otimes \OC_{\tNC}, \Til(\lambda) \otimes \OC_{\tNC}) \cong \\
\Hom_{\Db \Rep(\GB \times \Gm)} \bigl( \Til(\lambda), \Til(\lambda) \otimes \Rder\Gamma(\tNC, \OC_{\tNC}) \bigr) \cong \\
\Hom_{\Db \Rep(\GB \times \Gm)} \bigl( \Til(\lambda),  \Til(\lambda) \otimes \Gamma(\tNC, \OC_{\tNC}) \bigr) \cong \\
\Hom_{\Rep(\GB)}(\Til(\lambda), \Til(\lambda)).
\end{multline*}
Here the second isomorphism uses Theorem~\ref{thm:line-bundles-tNC}\eqref{it:vanishing}, and the third one the observation that $(\Gamma(\tNC, \OC_{\tNC}))^{\Gm}=\FM$, see e.g.~the argument in the proof of~\cite[Lemma~1.4.2]{br}. Since $\Til(\lambda)$ is indecomposable, the algebra $\Hom_{\Rep(\GB)}(\Til(\lambda), \Til(\lambda))$ is local, which implies the indecomposability of $\Til(\lambda) \otimes \OC_{\tNC}$.

Using the formula in Proposition~\ref{prop:Weyl-standard} and~\eqref{eqn:q-analogue}, we observe that if $(\Til(\lambda) \otimes \OC_{\tNC} : \Delta^\mu_{\tNC} \langle j \rangle) \neq 0$, then there exists $\nu \in \XB^+$ such that $\dom(\mu) \preceq \nu$ and $\nu \preceq \lambda$, which implies that $\dom(\mu) \preceq \lambda$. By basic properties of the order $\leq$ (see~\S\ref{ss:exotic-t-structure}), this implies that $\mu \leq \lambda$. This formula also implies that $(\Til(\lambda) \otimes \OC_{\tNC} : \Delta^\lambda_{\tNC})=1$. Hence 
$\Til(\lambda) \otimes \OC_{\tNC}$ satisfies the properties that characterize $\TC^\lambda$, which finishes the proof.
\end{proof}

\begin{remark}
In case $p=0$, Corollary~\ref{cor:tilting-exotic-dominant} can also be deduced from~\cite[Lemma 12]{bezru-tilting} and~\cite[Theorem~7]{arkb}.
\end{remark}

\subsection{Tensoring with a tilting $\GB$-module}

In this subsection we assume again that $\GB$ is standard. The goal of this subsection is to prove the following result, which is a generalization of part of Corollary~\ref{cor:tilting-exotic-dominant}.

\begin{prop}
\label{prop:tensor-tilting}
Assume $\GB$ is standard. Then for any finite dimensional tilting $\GB$-module $V$ and any $\TC \in \Tilt(\ES^{\GB \times \Gm}(\tNC))$, the object $V \otimes \TC$ belongs to $\Tilt(\ES^{\GB \times \Gm}(\tNC))$.
\end{prop}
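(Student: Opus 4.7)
The argument combines the Bott--Samelson description of tilting objects (Corollary~\ref{cor:tilting-Bott-Samelson}) with the fact that tensoring with a finite-dimensional $\GB$-module commutes, via the projection formula, with every Fourier--Mukai functor in sight (since the relevant kernels are $\GB\times\Gm$-equivariant sheaves on the appropriate product, and $V$, being a representation, is pulled back trivially from a point).

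First, $V\otimes \TC$ lies in $\ES^{\GB \times \Gm}(\tNC)$ by Proposition~\ref{prop:objects-exotic}\eqref{it:free-exact}. By Corollary~\ref{cor:tilting-Bott-Samelson}, $\TC$ is a direct summand of some Bott--Samelson object
\[
\BB \;=\; \Lder i^*\bigl(\Xi_{s_1}\circ\cdots\circ\Xi_{s_r}\circ \IS_{T_\omega}(\OC_\tgg)\bigr),
\]
so it suffices to show that
\[
V \otimes \BB \;\cong\; \Lder i^*\bigl(\Xi_{s_1}\circ\cdots\circ\Xi_{s_r}\circ \IS_{T_\omega}(V \otimes \OC_\tgg)\bigr)
\]
(where the isomorphism follows by commuting $V\otimes(-)$ past $\IS_{T_\omega}$, each $\Xi_{s_i}$, and $\Lder i^*$) is tilting in $\ES^{\GB \times \Gm}(\tNC)$.

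The core of the plan is to show that $V \otimes \OC_\tgg$ admits both a ``$*$-filtration'' (in the sense of~\S\ref{ss:comparison}) with subquotients of the form $\Delta^\mu_\tgg\langle m\rangle$ and one with subquotients of the form $\nabla^\mu_\tgg\langle m\rangle$. Granting this, Lemma~\ref{lem:Xi-standard-costandard-tgg} propagates both filtrations through the $\Xi_{s_i}$ and $\IS_{T_\omega}$; then the identifications in~\eqref{eqn:delta-nabla-i^*} together with the triangulated nature of $\Lder i^*$ transport them to $\tNC$, yielding a $*$-filtration of $V\otimes \BB$ by $\Delta^\mu_\tNC\langle m\rangle$'s and another by $\nabla^\mu_\tNC\langle m\rangle$'s. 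Since $V\otimes \BB$ lies in the heart and the (co)standard objects are in the heart as well (Corollary~\ref{cor:nabla-delta-heart}), an induction along the $*$-filtration shows that each intermediate term is in the heart, so the $*$-filtrations upgrade to honest standard and costandard filtrations in the abelian category $\ES^{\GB \times \Gm}(\tNC)$. Hence $V\otimes \BB$ is tilting, and so is its summand $V\otimes \TC$.

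The main obstacle is the ``$*$-filtration'' claim for $V\otimes \OC_\tgg$ on $\tgg$. Since $V$ admits both a Weyl and a good filtration, additivity reduces this to showing that, for $\nu\in \XB^+$, the object $\St(\nu)\otimes \OC_\tgg$ admits a $*$-filtration by $\Delta^\mu_\tgg\langle m\rangle$'s, and $\Cost(\nu)\otimes \OC_\tgg$ one by $\nabla^\mu_\tgg\langle m\rangle$'s. These are $\tgg$-analogues of Propositions~\ref{prop:Weyl-standard} and~\ref{prop:good-costandard}: using the $\tgg$-version of the identity $\nabla^\mu\cong \IS_{(T_{v^{-1}})^{-1}}(\OC(v\mu))$ (for $v\in W$ of minimal length with $v\mu\in\XB^+$) and adjunction, the relevant Hom groups reduce to $\Ext$-groups in $\Rep(\GB)$ from $V$ (or $V^*$) to $\Gm$-weight spaces of $\Rder\Gamma(\tgg,\OC_\tgg(\dom\mu))$; the expected vanishing for $i>0$ follows once one knows the $\tgg$-analog of Theorem~\ref{thm:line-bundles-tNC} (vanishing of higher cohomology of $\OC_\tgg(\lambda)$ for $\lambda\in\XB^+$ and good-filtration structure of its global sections in each $\Gm$-weight), which in turn is accessible because $\tgg$ is a vector bundle over $\Flag$ with fibers $(\gg/\ng)^*$, reducing the question to a Bott--Borel--Weil-type computation on $\Flag$.
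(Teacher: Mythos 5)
Your reduction via the Bott--Samelson objects of Corollary~\ref{cor:tilting-Bott-Samelson} and the commutation of $V\otimes(-)$ past $\Lder i^*$, the $\Xi_{s_i}$ and $\IS_{T_\omega}$ is the same as in the paper, and so is the final upgrade of $*$-filtrations to honest (co)standard filtrations in the heart. The divergence --- and the gap --- is in the claim that $V\otimes\OC_\tgg$ admits $*$-filtrations on $\tgg$ with subquotients $\Delta^\mu_\tgg\langle m\rangle$, respectively $\nabla^\mu_\tgg\langle m\rangle$. You justify this by appealing to ``$\tgg$-analogues'' of Propositions~\ref{prop:Weyl-standard} and~\ref{prop:good-costandard}, but those propositions are proved by invoking the criterion ``$\Ext^{>0}(X,\nabla^\mu_\tNC\langle j\rangle)=0$ for all $\mu,j$ implies $X$ has a standard filtration,'' which is a theorem about the graded highest weight \emph{abelian} category $\ES^{\GB\times\Gm}(\tNC)$. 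There is no exotic t-structure on $D^{\GB\times\Gm}(\tgg)$, hence no heart, no highest weight structure, and no such filtration criterion available there. Moreover, the formalism of graded exceptional sequences in \S\ref{ss:reminder} requires $\End(\nabla^i)=\Bbbk$ and graded finite type; on $\tgg$ the spaces $\Hom(\OC_\tgg,\nabla^\mu_\tgg)$ are free $\OC(\tg^*)$-modules of infinite $\FM$-dimension (Lemma~\ref{lem:global-coh-nabla-delta}), so that formalism does not apply either. The $\Ext$-vanishing you would obtain (and it is not even clear you can establish the needed orthogonality on $\tgg$, since $\Hom(\Delta^\lambda_\tgg,\nabla^\mu_\tgg)$ involves $\IS_{T_{w_\lambda^{-1}}T_{w_\mu}}\OC_\tgg$, which is a genuinely nontrivial complex) does not by itself produce a $*$-filtration in a triangulated category without an ambient t-structure.

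The paper's actual proof avoids this entirely. It never tries to produce filtrations on $\tgg$; instead it uses the base-change isomorphism of Lemma~\ref{lem:Hom-tNC-tgg},
\[
\FM \lotimes_{\OC(\tg^*)} \Rder\Hom_{\Coh^{\GB}(\tgg)}(\FC,\GC) \simto \Rder\Hom_{\Coh^{\GB}(\tNC)}(\Lder i^* \FC, \Lder i^* \GC),
\]
to reduce the required $\Ext$-vanishings on $\tNC$ to the statement that the complexes $\Rder\Hom_{\Coh^\GB(\tgg)}(\Xi_{s_1}\cdots\IS_{T_\omega}(V\otimes\OC_\tgg),\nabla^\mu_\tgg)$ are concentrated in degree $0$ \emph{and free as $\OC(\tg^*)$-modules}; this is then deduced from Corollary~\ref{cor:Hom-BS-delta-nabla} and Lemma~\ref{lem:global-coh-nabla-delta} (which require only Lemma~\ref{lem:Xi-standard-costandard-tgg} and the self-adjointness of $\Xi_s$ up to twist, not any t-structure on $\tgg$). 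The freeness is the crucial extra ingredient that your proposal is missing: it is precisely what makes the derived tensor $\FM\lotimes_{\OC(\tg^*)}(-)$ stay in degree $0$. To make your route work you would essentially need to develop a theory of ``$\OC(\tg^*)$-deformed'' highest weight structures on $D^{\GB\times\Gm}(\tgg)$; this is plausible but substantially more than the paper needs, and it is not sketched in your plan.
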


Before giving the proof of this proposition, we need some preparation. First we observe that since the morphism $\nu \colon \tgg \to \tg^*$ is $\GB$-equivariant (where $\GB$ acts trivially on $\tg^*$), for $\FC,\GC$ in $\QCoh^\GB(\tgg)$ the vector space $\Hom_{\QCoh^\GB(\tgg)}(\FC,\GC)$ has a natural structure of $\OC(\tg^*)$-module. Therefore, the bifunctor $\Rder\Hom_{\QCoh^\GB(\tgg)}(-,-)$ factors through a bifunctor with values in $D \Mod(\OC(\tg^*))$, which we denote similarly.

\begin{lem}
\label{lem:Hom-tNC-tgg}
For any $\FC, \GC$ in $D^{\GB}(\tgg)$, there exists a canonical isomorphism
\[
\FM \lotimes_{\OC(\tg^*)} \Rder\Hom_{\Coh^{\GB}(\tgg)}(\FC,\GC) \simto \Rder\Hom_{\Coh^{\GB}(\tNC)}(\Lder i^* \FC, \Lder i^* \GC).
\]
\end{lem}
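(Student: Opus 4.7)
The plan is to decompose the asserted isomorphism into two steps: (1) a $(\Lder i^*, \Rder i_*)$-adjunction reducing the right-hand side to a $\Rder\Hom$ computed on $\tgg$; (2) a base-change/Koszul argument commuting $\lotimes_{\OC(\tg^*)} \FM$ past $\Rder\Hom$. The two key classical inputs are that the map $\nu \colon \tgg \to \tg^*$ is flat (indeed, locally on the base $\Flag$ it looks like the projection $(\gg/\bg)^* \oplus \tg^* \to \tg^*$), and that $\FM$ is perfect as an $\OC(\tg^*)$-complex, with an explicit Koszul resolution $K^\bullet = \mathrm{S}(\tg) \otimes \Lambda^\bullet \tg$ of length $\dim \tg$.

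First, the flatness of $\nu$ gives $i_* \OC_{\tNC} \cong \OC_{\tgg} \otimes_{\OC(\tg^*)} \FM \cong \OC_{\tgg} \lotimes_{\OC(\tg^*)} \FM$. Combining the projection formula for the closed immersion $i$ with associativity of derived tensor products, we obtain
\[
\Rder i_* \Lder i^* \GC \cong \GC \lotimes_{\OC_{\tgg}} i_* \OC_{\tNC} \cong \GC \lotimes_{\OC(\tg^*)} \FM.
\]
Applying the $(\Lder i^*, \Rder i_*)$-adjunction then yields a natural isomorphism
\[
\Rder\Hom_{\Coh^\GB(\tNC)}(\Lder i^* \FC, \Lder i^* \GC) \cong \Rder\Hom_{\Coh^\GB(\tgg)}(\FC, \GC \lotimes_{\OC(\tg^*)} \FM).
\]

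Second, I construct a natural isomorphism
\[
\FM \lotimes_{\OC(\tg^*)} \Rder\Hom_{\Coh^\GB(\tgg)}(\FC,\GC) \simto \Rder\Hom_{\Coh^\GB(\tgg)}(\FC, \FM \lotimes_{\OC(\tg^*)} \GC).
\]
Replacing $\FM$ on both sides by its Koszul resolution $K^\bullet$ — a bounded complex of finite free $\OC(\tg^*)$-modules — and using that $\Rder\Hom_{\Coh^\GB(\tgg)}(\FC,-)$ is triangulated and commutes with finite direct sums and shifts, both sides are canonically identified with the totalization of $\Lambda^\bullet\tg \otimes \Rder\Hom_{\Coh^\GB(\tgg)}(\FC,\GC)$ equipped with the Koszul differential. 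Composing with the isomorphism from the previous paragraph produces the map claimed in the lemma.

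The main obstacle is not algebraic but bookkeeping: one must verify that every step is compatible with the $\GB$-equivariant structure and that $\Rder\Hom$, $\Rder i_*$, $\Lder i^*$ and $\lotimes$ are well-defined functors on the relevant categories of equivariant (quasi-)coherent complexes. Since $\GB$ acts trivially on $\tg^*$, the $\OC(\tg^*)$-action and the Koszul resolution of $\FM$ are automatically $\GB$-equivariant, and the projection formula, adjunction and derived tensor/Hom identities hold in the equivariant setting by Appendix~\ref{sec:appendix}; hence the argument lifts to $D^\GB(\tgg)$ without modification.
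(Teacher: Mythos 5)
Your proof is correct, and it takes a genuinely different route from the paper's. You first apply the $(\Lder i^*, \Rder i_*)$-adjunction together with the projection formula and the flatness of $\nu$ to identify the right-hand side with $\Rder\Hom_{\Coh^\GB(\tgg)}(\FC, \FM \lotimes_{\OC(\tg^*)} \GC)$, and then use the Koszul resolution of $\FM$ over $\OC(\tg^*)$ to pull $\FM \lotimes_{\OC(\tg^*)}(-)$ out of the second slot of $\Rder\Hom$. The paper instead factors $\Rder\Hom_{\Coh^\GB(\tgg)}$ as $\Inv^{\GB \times \tg^*} \circ \Rder\Hom_{\Coh(\tgg)}$ via Proposition~\ref{prop:Hom-equivariant}, commutes $\FM \lotimes_{\OC(\tg^*)}(-)$ past the invariants functor using Proposition~\ref{prop:Inv-tensor}, and then resolves the remaining non-equivariant assertion by flat base change along $\nu$ (in the form of~\cite[Theorem~3.10.3]{lipman}) and compatibility of $\Lder i^*$ with $\Rder\sheafHom$ of perfect complexes. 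So your argument stays entirely inside the equivariant derived category, trading Proposition~\ref{prop:Inv-tensor} for the equivariant adjunction and projection formula of Proposition~\ref{prop:adj-proj}, while the paper strips off equivariance at the outset so that the geometric heart of the computation is a standard coherent base-change. Both implicitly use the smoothness of $\tgg$ (you via $\Lder i^*$ preserving $\Db$, the paper via perfectness of $\FC$). One point you should make explicit: construct the comparison map $\FM \lotimes_{\OC(\tg^*)} \Rder\Hom_{\Coh^\GB(\tgg)}(\FC,-) \to \Rder\Hom_{\Coh^\GB(\tgg)}(\FC, \FM \lotimes_{\OC(\tg^*)}(-))$ before checking it is an isomorphism; for instance, replace $\GC$ by an injective resolution $\IC^\bullet$ in $\QCoh^\GB(\tgg)$, observe that $K^\bullet \otimes_{\OC(\tg^*)} \IC^\bullet$ is again a bounded-below complex of injectives because each $K^i$ is a finite free $\OC(\tg^*)$-module (so the tensor is a finite direct sum of copies of $\IC^j$), and note that the evident map
$K^\bullet \otimes_{\OC(\tg^*)} \Hom^\bullet_{\QCoh^\GB(\tgg)}(\FC, \IC^\bullet) \to \Hom^\bullet_{\QCoh^\GB(\tgg)}(\FC, K^\bullet \otimes_{\OC(\tg^*)} \IC^\bullet)$
is a termwise isomorphism.
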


\begin{proof}
Consider the flat, affine group scheme $\GB \times \tg^*$ over $\OC(\tg^*)$. Then by Proposition~\ref{prop:Hom-equivariant} there exists a canonical isomorphism
\[
\Rder\Hom_{\Coh^{\GB}(\tgg)}(\FC,\GC) \simto \Inv^{\GB \times \tg^*} \Rder\Hom_{\Coh(\tgg)}(\FC,\GC)
\]
in $\Db \Mod(\OC(\tg^*))$. (See Appendix~\ref{sec:appendix} for the definition of the functor $\Inv^{\GB \times \tg^*}$.)
%
By Proposition~\ref{prop:Inv-tensor},
we also have a canonical isomorphism of functors
\begin{equation}
\label{eqn:Inv-tensor}
\FM \lotimes_{\OC(\tg^*)} \Inv^{\GB \times \tg^*}(-) \simto \Inv^\GB( \FM \lotimes_{\OC(\tg^*)}(-)).
\end{equation}

Then our claim is obtained from the following chain of isomorphisms, where we identify quasi-coherent sheaves on $\tg^*$ and $\OC(\tg^*)$-modules in the standard way:
\begin{align*}
\FM \lotimes_{\OC(\tg^*)} \Rder\Hom_{\Coh^\GB(\tgg)}(\FC,\GC) & \cong \FM \lotimes_{\OC(\tg^*)} \Inv^{\GB \times \tg^*} \bigl( \nu_* \Rder\sheafHom_{\tgg}(\FC,\GC) \bigr) \\
&\cong \Inv^{\GB} \bigl( \FM \lotimes_{\OC(\tg^*)} \nu_* \Rder\sheafHom_{\tgg}(\FC,\GC) \bigr) \\
&\cong \Inv^{\GB} \bigl( \Rder\Gamma(\tNC, \Lder i^* \Rder\sheafHom_{\tgg}(\FC,\GC)) \bigr) \\
&\cong \Inv^{\GB} \bigl( \Rder\Gamma(\tNC, \Rder\sheafHom_{\tNC}(\Lder i^* \FC, \Lder i^* \GC)) \bigr) \\
&\cong \Rder\Hom_{\Coh^{\GB}(\tNC)}(\Lder i^* \FC, \Lder i^* \GC).
\end{align*}
(Here the second isomorphism follows from~\eqref{eqn:Inv-tensor}, the third one from the base change theorem -- in the form of~\cite[Theorem~3.10.3]{lipman} -- and the last one from Proposition~\ref{prop:Hom-equivariant} again. The other isomorphisms are obvious.)
\end{proof}

\begin{lem}
\label{lem:global-coh-nabla-delta}
Assume $\GB$ is standard. 
\begin{enumerate}
\item
\label{it:global-coh-nabla}
For any $\mu \in \XB$, we have
\[
\Hom^n_{\Db \Coh(\tgg)}(\OC_{\tgg}, \nabla^\mu_{\tgg})=0 \qquad \text{for $n \neq 0$.}
\]
Moreover, $\Hom_{\Db \Coh(\tgg)}(\OC_{\tgg}, \nabla^\mu_{\tgg})$ admits a $\ZM_{\geq 0}$-filtration, as a $\GB \times \Gm$-equivariant $\OC(\tg^*)$-module, with associated graded 
\[
\OC(\tg^*) \otimes \Gamma(\tNC, \OC_{\tNC}(\dom(\mu))) \langle \delta(\mu) \rangle.
\]
\item
\label{it:global-coh-delta}
For any $\mu \in \XB$, we have
\[
\Hom^n_{\Db \Coh(\tgg)}(\Delta^\mu_{\tgg}, \OC_{\tgg})=0 \qquad \text{for $n \neq 0$.}
\]
Moreover, $\Hom_{\Db \Coh(\tgg)}(\Delta^\mu_{\tgg}, \OC_{\tgg})$ admits a $\ZM_{\geq 0}$-filtration, as a $\GB \times \Gm$-equivariant $\OC(\tg^*)$-module, with associated graded 
\[
\OC(\tg^*) \otimes \Gamma(\tNC, \OC_{\tNC}(\dom(-\mu))) \langle \delta(\mu) \rangle.
\]
\end{enumerate}
\end{lem}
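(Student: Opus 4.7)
The strategy is to combine the braid group action with adjunction to reduce both statements to cohomological computations for dominant line bundles on $\tgg$, and then to exploit the filtration on $\mathrm{S}(\gg/\ng)$ coming from the short exact sequence $0 \to \gg/\bg \to \gg/\ng \to \tg \to 0$ of $\BB$-modules.

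First, in analogy with Proposition~\ref{prop:nabla-delta-braid-group}, I would derive explicit descriptions of $\nabla^\mu_\tgg$ and $\Delta^\mu_\tgg$. Let $v, v' \in W$ be of minimal length such that $v\mu$ is dominant and $v'\mu$ is antidominant, so that $\ell(v) = \delta(\mu)$ and $\ell(v') = \delta(-\mu)$. The length-additive factorization $t_\mu = v^{-1} w_\mu$ (obtained from Lemma~\ref{lem:w-lambda}) combined with Lemma~\ref{lem:T-t-lambda} gives $T_{w_\mu} = \theta_{v\mu}(T_{v^{-1}})^{-1}$, and hence
\[
\nabla^\mu_\tgg = \IS_{T_{w_\mu}}(\OC_\tgg) \cong \IS_{(T_{v^{-1}})^{-1}}\bigl(\OC_\tgg(v\mu)\bigr).
\]
Choosing $w = t_\mu$ in the definition of $\Delta^\mu_\tgg$ (which gives a shift $\langle -\delta(\mu) \rangle$) and using Lemma~\ref{lem:T-t-lambda} applied to $-\mu$, a parallel computation yields
\[
\Delta^\mu_\tgg \cong \IS_{(T_{(v')^{-1}})^{-1}}\bigl(\OC_\tgg(v'\mu)\bigr) \langle -\delta(\mu) - \delta(-\mu) \rangle.
\]
Applying the adjunction $\Hom(\IS_b A, B) \cong \Hom(A, \IS_{b^{-1}} B)$ (valid since each $\IS_b$ is an equivalence) together with $\IS_{T_{v^{-1}}}(\OC_\tgg) \cong \OC_\tgg\langle -\delta(\mu)\rangle$, and its analogue for $v'$, then produces, as $\GB \times \Gm$-equivariant $\OC(\tg^*)$-modules,
\begin{gather*}
\Hom^n_{\Db\Coh(\tgg)}(\OC_\tgg, \nabla^\mu_\tgg) \cong H^n\bigl(\tgg, \OC_\tgg(\dom(\mu))\bigr) \langle \delta(\mu) \rangle, \\
\Hom^n_{\Db\Coh(\tgg)}(\Delta^\mu_\tgg, \OC_\tgg) \cong H^n\bigl(\tgg, \OC_\tgg(\dom(-\mu))\bigr) \langle \delta(\mu) \rangle.
\end{gather*}

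It then remains to analyze $H^\ast(\tgg, \OC_\tgg(\lambda))$ for $\lambda \in \XB^+$. Since the projection $\pi \colon \tgg \to \Flag$ is affine and $\OC_\tgg(\lambda) = \pi^*\OC_\Flag(\lambda)$, the projection formula yields $R\pi_*\OC_\tgg(\lambda) \cong \OC_\Flag(\lambda) \otimes_{\OC_\Flag}\pi_*\OC_\tgg$, where $\pi_*\OC_\tgg$ is the $\GB$-equivariant sheaf on $\Flag$ associated to the $\BB$-module $\mathrm{S}(\gg/\ng)$. The $\BB$-equivariant extension $0 \to \gg/\bg \to \gg/\ng \to \tg \to 0$, in which $\BB$ acts trivially on $\tg = \bg/\ng$, induces an increasing $\BB \times \Gm$-equivariant filtration on $\mathrm{S}(\gg/\ng)$ by the degree in $\tg$, with associated graded $\mathrm{S}(\gg/\bg) \otimes \mathrm{S}(\tg)$; the natural $\OC(\tg^*)$-module structure acts through the second tensor factor. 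Pushing this filtration through $\pi_*(\OC_\Flag(\lambda) \otimes -)$ and applying $\Gamma(\Flag, -)$ yields a filtration whose graded pieces take the form $H^\ast(\tNC, \OC_\tNC(\lambda)) \otimes \mathrm{S}^k(\tg)$. For $\lambda$ dominant, the higher cohomology of each graded piece vanishes by Theorem~\ref{thm:line-bundles-tNC}\eqref{it:vanishing}, so the associated long exact sequences all split; this gives $H^n(\tgg, \OC_\tgg(\lambda)) = 0$ for $n \neq 0$, together with a $\ZM_{\geq 0}$-filtration of $\Gamma(\tgg, \OC_\tgg(\lambda))$ whose associated graded is $\Gamma(\tNC, \OC_\tNC(\lambda)) \otimes \OC(\tg^*)$. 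Combined with the identifications of the first step, this yields both parts of the lemma. The main delicate point is bookkeeping the grading shifts through the braid group adjunction, to confirm that precisely $\langle \delta(\mu) \rangle$ (rather than an expression involving $\delta(-\mu)$) emerges in each case, despite the asymmetry $\delta(\mu) \neq \delta(-\mu)$ in general.
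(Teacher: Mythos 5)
Your proof matches the paper's argument: use the braid group identity $T_{w_\mu} = \theta_{v\mu}(T_{v^{-1}})^{-1}$ to rewrite $\nabla^\mu_\tgg$ (and similarly $\Delta^\mu_\tgg$) in terms of a dominant (resp.\ antidominant) line bundle twisted by $\IS_{(T_{v^{-1}})^{-1}}$, move the braid functor across the $\Hom$ via the equivalence and $\IS_{T_{v^{-1}}}(\OC_\tgg)\cong\OC_\tgg\langle-\ell(v)\rangle$ to reduce to $\mathsf{H}^\ast(\tgg,\OC_\tgg(\lambda))$ for $\lambda$ dominant, and then handle that via the filtration on $\mathrm{S}(\gg/\ng)$ induced by the $\BB$-module extension with sub $\tg$ and quotient $\gg/\bg$, using Theorem~\ref{thm:line-bundles-tNC}\eqref{it:vanishing} for the vanishing on each graded piece. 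Your grading bookkeeping for part~(2) (the cancellation of $\delta(-\mu)$) is exactly the point the paper leaves implicit in ``the proof of~(2) is similar,'' and it checks out. One small slip: the $\BB$-module extension is $0\to\tg\to\gg/\ng\to\gg/\bg\to 0$ with $\tg\cong\bg/\ng$ as the submodule, not the quotient as you wrote; your subsequent use of the $\OC(\tg^*)$-structure makes clear you intended the correct orientation. You should also note, as the paper does, that $\Rder^n\mathsf{Ind}_\BB^\GB$ commutes with the filtered colimit defining $\mathrm{S}(\gg/\ng)$, which is what lets the vanishing on the graded pieces propagate to the whole.
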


\begin{proof}
We prove~\eqref{it:global-coh-nabla}; the proof of~\eqref{it:global-coh-delta} is similar.

Let
$v \in W$ be of minimal length such that $v \mu$ is dominant. Then, as in the proof of Proposition~\ref{prop:nabla-delta-braid-group}, we have
\[
\nabla^\mu_{\tgg} \cong \IS_{(T_{v^{-1}})^{-1}} \bigl( \OC_{\tgg}(v\mu) \bigr).
\]
It follows that
\begin{multline*}
\Hom^n_{\Db \Coh(\tgg)}(\OC_{\tgg}, \nabla^\mu_{\tgg}) \cong \Hom^n_{\Db \Coh(\tgg)} \bigl( \OC_{\tgg}, \IS_{(T_{v^{-1}})^{-1}} ( \OC_{\tgg}(v\mu) ) \bigr) \\
\cong \Hom^n_{\Db \Coh(\tgg)} ( \OC_{\tgg}, \OC_{\tgg}(v\mu) ) \langle \ell(v) \rangle \cong \mathsf{H}^n(\tgg, \OC_{\tgg}(v\mu) ) \langle \ell(v) \rangle
\end{multline*}
since $\IS_{T_{v^{-1}}} ( \OC_{\tgg}) \cong \OC_{\tgg} \langle - \ell(v) \rangle$, see~\S\ref{ss:tgg}. Hence what we have to prove is that if $\lambda \in \XB^+$, then $\mathsf{H}^i(\tgg, \OC_{\tgg}(\lambda))=0$ for $i > 0$, and moreover $\Gamma(\tgg, \OC_{\tgg}(\lambda))$ admits a filtration (as a $\GB \times \Gm$-equivariant $\OC(\tg^*)$-module) with associated graded $\OC(\tg^*) \otimes \Gamma(\tNC, \OC_{\tNC}(\lambda))$. 

This proof is standard: we have natural isomorphisms
\[
\mathsf{H}^n(\tgg, \OC_{\tgg}(\lambda)) \cong \Rder^n \mathsf{Ind}_\BB^\GB(\mathrm{S}(\gg/\ng) \otimes \FM_\lambda), \qquad \mathsf{H}^n(\tNC, \OC_{\tNC}(\lambda)) \cong \Rder^n \mathsf{Ind}_\BB^\GB(\mathrm{S}(\gg/\bg) \otimes \FM_\lambda)
\]
where $\mathrm{S}(\gg/\ng)$, resp.~$\mathrm{S}(\gg/\bg)$, is the symmetric algebra of the $\BB$-module $\gg/\ng$, resp.~$\gg/\bg$. Now there exists a natural exact sequence of $\BB$-modules $\tg \hookrightarrow \gg/\ng \twoheadrightarrow \gg/\bg$ (where $\BB$ acts trivially on $\tg$), hence the $\BB$-equivariant $\OC(\tg^*)$-module $\mathrm{S}(\gg/\ng)$ admits a filtration with associated graded $\OC(\tg^*) \otimes \mathrm{S}(\gg/\bg)$. Using the fact that $\mathsf{H}^n(\tNC, \OC_{\tNC}(\lambda))=0$ for $n \neq 0$ (see Theorem~\ref{thm:line-bundles-tNC}\eqref{it:vanishing}) and that the functor $\Rder^n \mathsf{Ind}_\BB^\GB(-)$ commutes with direct sums (see~\cite[Remark in~\S I.4.17]{jantzen}), this implies our claim.
\end{proof}

\begin{cor}
\label{cor:Hom-BS-delta-nabla}
Assume $\GB$ is standard. 
Let $\omega \in \Omega$ and let $(s_1, \cdots, s_r)$ be a sequence of simple reflections.
\begin{enumerate}
\item
\label{it:Hom-BS-nabla}
We have
\[
\Hom^n_{\Db\Coh(\tgg)} \bigl( \Xi_{s_1} \circ \cdots \circ \Xi_{s_r} \circ \IS_{T_\omega} (\OC_{\tgg}), \nabla^\mu_{\tgg} \bigr) =0 \quad \text{for $n \neq 0$.}
\]
Moreover, $\Hom_{\Db\Coh(\tgg)} \bigl( \Xi_{s_1} \circ \cdots \circ \Xi_{s_r} \circ \IS_{T_\omega} (\OC_{\tgg}), \nabla^\mu_{\tgg} \bigr)$ admits a $\ZM_{\geq 0}$-filtration whose subquotients have the form $\OC(\tg^*) \otimes V \langle m \rangle$ where $m \in \ZM$ and $V$ is a $\GB$-module which admits a good filtration.
\item
\label{it:Hom-Delta-BS}
We have
\[
\Hom^n_{\Db\Coh(\tgg)} \bigl( \Delta^\mu_{\tgg}, \Xi_{s_1} \circ \cdots \circ \Xi_{s_r} \circ \IS_{T_\omega} (\OC_{\tgg}) \bigr)=0 \quad \text{for $n \neq 0$.}
\]
Moreover, $\Hom_{\Db\Coh(\tgg)} \bigl( \Delta^\mu_{\tgg}, \Xi_{s_1} \circ \cdots \circ \Xi_{s_r} \circ \IS_{T_\omega} (\OC_{\tgg}) \bigr)$ admits a $\ZM_{\geq 0}$-filtration whose subquotients have the form $\OC(\tg^*) \otimes V \langle m \rangle$ where $m \in \ZM$ and $V$ is a $\GB$-module which admits a good filtration.
\end{enumerate}
\end{cor}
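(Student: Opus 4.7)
My plan is to prove both statements by induction on $r$; the two parts are completely parallel (swapping $\nabla$ with $\Delta$, $w_\mu$ with $w_\mu^{-1}$, and the two parts of Lemma~\ref{lem:global-coh-nabla-delta}), so I describe the argument only for part~(1).

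For the base case $r=0$, we have $X=\IS_{T_\omega}(\OC_{\tgg})$. The functor $\IS_{T_\omega}$ is an autoequivalence of $\Db\Coh(\tgg)$ with inverse $\IS_{T_{\omega^{-1}}}$. Since $\omega$ has length~$0$, one has $\ell(w_\mu\omega^{-1})=\ell(w_\mu)$, so the braid relation gives $T_{w_\mu}T_{\omega^{-1}}=T_{w_\mu\omega^{-1}}$ in $\Baff$; the right-action rule then yields $\IS_{T_{\omega^{-1}}}(\nabla^\mu_{\tgg})=\IS_{T_{w_\mu\omega^{-1}}}(\OC_{\tgg})=\nabla^{\mu'}_{\tgg}\langle k\rangle$ for an appropriate $\mu'\in\XB$ (determined by $w_\mu\omega^{-1}\in Wt_{\mu'}$) and $k\in\ZM$. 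The conclusion then reduces directly to Lemma~\ref{lem:global-coh-nabla-delta}\eqref{it:global-coh-nabla}.

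For the inductive step, write $X=\Xi_s(X')$ with $s=s_1$ and $X'=\Xi_{s_2}\circ\cdots\circ\Xi_{s_r}\circ\IS_{T_\omega}(\OC_{\tgg})$. I will use one of the two distinguished triangles of functors coming from~\eqref{eqn:triangle-reflection-tgg-1} and~\eqref{eqn:triangle-reflection-tgg-2}, namely
\[
X'\langle 1\rangle\to X\to \IS_{T_s}(X')
\qquad\text{or}\qquad
\IS_{T_s^{-1}}(X')\to X\to X'\langle -1\rangle,
\]
and apply $\Hom^{\bullet}_{\Db\Coh(\tgg)}(-,\nabla^\mu_{\tgg})$. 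The key point is that via the right-action rule $\IS_{T_s^{\pm 1}}\circ\IS_{T_{w_\mu}}=\IS_{T_{w_\mu}T_s^{\pm 1}}$: if $w_\mu s<w_\mu$ then $T_{w_\mu}T_s^{-1}=T_{w_\mu s}$, so $\IS_{T_s^{-1}}(\nabla^\mu_{\tgg})$ is a shifted costandard; symmetrically, if $w_\mu s>w_\mu$ then $\IS_{T_s}(\nabla^\mu_{\tgg})$ is a shifted costandard. In each case I choose the triangle so that, after the adjunction $\Hom^n(\IS_{T_s^{\pm 1}}(X'),\nabla^\mu_{\tgg})\cong\Hom^n(X',\IS_{T_s^{\mp 1}}(\nabla^\mu_{\tgg}))$, that Hom becomes one against a shifted costandard and is controlled by the induction hypothesis. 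The remaining term $\Hom^\bullet(X'\langle\pm 1\rangle,\nabla^\mu_{\tgg})$ is also handled by induction since $\nabla^\mu_{\tgg}\langle\pm 1\rangle$ is a shifted costandard. The LES then forces $\Hom^n(X,\nabla^\mu_{\tgg})=0$ for $n\neq 0$ (the potentially dangerous degrees $\pm 1$ being killed by the induction-hypothesis vanishings), and at degree~$0$ produces a short exact sequence of $\GB\times\Gm$-equivariant $\OC(\tg^*)$-modules whose outer terms carry the desired filtrations, yielding the required filtration on the middle by concatenation.

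The principal technical content is the case analysis identifying one of $\IS_{T_s^{\pm 1}}(\nabla^\mu_{\tgg})$ (and, for part~(2), one of $\IS_{T_s^{\pm 1}}(\Delta^\mu_{\tgg})$, via $\Delta^\mu_{\tgg}=\IS_{(T_{w_\mu^{-1}})^{-1}}(\OC_{\tgg})$ and the observation $sw_\mu^{-1}<w_\mu^{-1}\iff w_\mu s<w_\mu$) as a shifted costandard (resp.~standard); this rests on careful use of~\eqref{eqn:length} and Lemma~\ref{lem:w-lambda}. Once this case analysis is in place, the LES argument runs without further difficulty.
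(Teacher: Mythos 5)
Your argument is correct, and it takes a genuinely different route from the paper. The paper's proof hinges on the self-adjointness of $\Xi_s$ up to a shift, namely the isomorphism $\Hom_{\Db\Coh(\tgg)}(\Xi_s\FC,\GC)\cong\Hom_{\Db\Coh(\tgg)}(\FC,\Xi_s\GC)\langle -2\rangle$ (proved in the paper via the factorization $\Xi_s\cong\Lder(\tpi_s)^*\circ\Rder(\tpi_s)_*$ and Grothendieck duality). This lets them move \emph{all} the $\Xi_{s_i}$'s to the second argument in one stroke, reducing to $\Hom(\OC_{\tgg},-)$ applied to an object that has a filtration by shifted costandards by Lemma~\ref{lem:Xi-standard-costandard-tgg}, and then Lemma~\ref{lem:global-coh-nabla-delta} finishes the job. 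You instead avoid Grothendieck duality and the self-adjunction entirely, replacing them with the cheaper inverse-pair adjunction $\Hom(\IS_{T_s^{\pm 1}}X,Y)\cong\Hom(X,\IS_{T_s^{\mp 1}}Y)$, and pay for it with an induction on $r$ and a case split (according to the Bruhat comparison of $w_\mu s$ and $w_\mu$) to decide which of the triangles~\eqref{eqn:triangle-reflection-tgg-1},~\eqref{eqn:triangle-reflection-tgg-2} to use; the long exact sequence then gives both the vanishing for $n\neq 0$ and the short exact sequence in degree $0$ that builds the filtration. In effect you re-derive inline the information that the paper extracts from Lemma~\ref{lem:Xi-standard-costandard-tgg}. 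Both approaches bottom out at Lemma~\ref{lem:global-coh-nabla-delta}. One small point you leave implicit (as does the paper for~\eqref{eqn:Hom-Xi}): the adjunction $\Hom(\IS_{T_s}X,Y)\cong\Hom(X,\IS_{T_s^{-1}}Y)$ transports the $\OC(\tg^*)$-module structure only up to a twist by $s$, since the kernel $\OC_{Z_s}$ maps to the graph of $s$ in $\tg^*\times\tg^*$; this is harmless because an $s$-twist of $\OC(\tg^*)\otimes V\langle m\rangle$ is again of that form, but it is worth noting so that the concatenated filtration is indeed a filtration of $\GB\times\Gm$-equivariant $\OC(\tg^*)$-modules.
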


\begin{proof}
We prove~\eqref{it:Hom-BS-nabla}; the proof of~\eqref{it:Hom-Delta-BS} is similar.

First we claim that for any simple reflection $s$ and any $\FC,\GC$ in $D^{\GB \times \Gm}(\tgg)$ there exists a canonical isomorphism of $\GB \times \Gm$-modules
\begin{equation}
\label{eqn:Hom-Xi}
\Hom_{\Db \Coh(\tgg)} \bigl( \Xi_s(\FC), \GC \bigr) \cong \Hom_{\Db \Coh(\tgg)} \bigl( \FC, \Xi_s(\GC) \bigr) \langle -2 \rangle.
\end{equation}
Indeed, by definition of $\Xi_s$ one can assume that $s$ is a \emph{finite} simple reflection. Then if we still denote by $\Xi_s$ the endofunctor of $\Db \Coh(\tgg)$ defined as in~\S\ref{ss:comparison}, by~\cite[Proposition~5.2.2]{riche}, there exists a canonical isomorphism of functors
$\Xi_s \cong L(\tpi_s)^* \circ \Rder(\tpi_s)_*$,
where $\tpi_s \colon \tgg \to \tgg_s$ is the morphism considered in~\S\ref{ss:tgg}. Hence, since $\tpi_s$ is a projective morphism, we have a canonical isomorphism
\[
\Hom_{\Db \Coh(\tgg)} \bigl( \Xi_s(\FC), \GC \bigr) \cong \Hom_{\Db \Coh(\tgg)} \bigl( \FC, (\tpi_s)^! \circ \Rder(\tpi_s)_*(\GC) \bigr).
\]
Since $\tgg$ and $\tgg_s$ are smooth of dimension $\dim(\gg)$, we deduce a canonical isomorphism
\[
\Hom_{\Db \Coh(\tgg)} \bigl( \Xi_s(\FC), \GC \bigr) \cong \Hom_{\Db \Coh(\tgg)} \bigl( \FC, \omega_{\tgg} \otimes_{\OC_{\tgg}} L(\tpi_s)^* (\omega_{\tgg_s}^{-1} \otimes_{\OC_{\tgg_s}} \Rder(\tpi_s)_*(\GC)) \bigr).
\]
This isomorphism is $\GB \times \Gm$-equivariant by functoriality of the constructions in the duality theorem~\cite[Theorem~VII.3.3]{hartshorne}.
Finally we observe that we have $\omega_{\tgg} \cong \OC_{\tgg} \langle 2\dim(\gg/\bg) \rangle$ and $\omega_{\tgg_s} \cong \OC_{\tgg} \langle  2\dim(\gg/\pg_s^{\mathrm{nil}})\rangle$ as $\GB \times \Gm$-equivariant coherent sheaves, and the proof of~\eqref{eqn:Hom-Xi} is complete.

Using~\eqref{eqn:Hom-Xi}, we obtain a canonical isomorphism
\begin{multline*}
\Hom^n_{\Db\Coh(\tgg)} \bigl( \Xi_{s_1} \circ \cdots \circ \Xi_{s_r} \circ \IS_{T_\omega} (\OC_{\tgg}), \nabla^\mu_{\tgg} \bigr) \cong \\
\Hom^n_{\Db\Coh(\tgg)} \bigl( \OC_{\tgg}, \IS_{T_{\omega^{-1}}} \circ \Xi_{s_r} \circ \cdots \circ \Xi_{s_1}(\nabla^\mu_{\tgg}) \bigr) \langle -2r \rangle.
\end{multline*}
Now, by Lemma~\ref{lem:Xi-standard-costandard-tgg}, the object $\IS_{T_{\omega^{-1}}} \circ \Xi_{s_r} \circ \cdots \circ \Xi_{s_1} (\nabla^\mu_{\tgg})$ admits (in $D^{\GB \times \Gm}(\tgg)$) a ``filtration'' with ``subquotients'' of the form $\nabla^\nu_{\tgg} \langle m \rangle$; hence the desired properties follow from Lemma~\ref{lem:global-coh-nabla-delta}\eqref{it:global-coh-nabla} and Theorem~\ref{thm:line-bundles-tNC}\eqref{it:character}.
\end{proof}

\begin{proof}[Proof of Proposition~{\rm \ref{prop:tensor-tilting}}]
By Corollary~\ref{cor:tilting-Bott-Samelson}, it suffices to prove that for any $\omega \in \Omega$ and any sequence $(s_1, \cdots, s_r)$ of simple reflections, the object
\[
V \otimes
\Lder i^* \bigl( \Xi_{s_1} \circ \cdots \circ \Xi_{s_r} \circ \IS_{T_\omega} (\OC_{\tgg}) \bigr) \cong \Lder i^* \bigl( \Xi_{s_1} \circ \cdots \circ \Xi_{s_r} \circ \IS_{T_\omega} (V \otimes \OC_{\tgg}) \bigr)
\]
is tilting. And for this it suffices to prove that for all $\mu \in \XB$ we have
\begin{align*}
\Hom_{D^{\GB}(\tNC)}^n \bigl( \Lder i^*( \Xi_{s_1} \circ \cdots \circ \Xi_{s_r} \circ \IS_{T_\omega} (V \otimes \OC_{\tgg}))
, \nabla^\mu_{\tNC} \bigr)=0 \qquad \text{for $n \neq 0$,} \\
\Hom_{D^{\GB}(\tNC)}^n \bigl( \Delta^\mu_{\tNC}, \Lder i^* ( \Xi_{s_1} \circ \cdots \circ \Xi_{s_r} \circ \IS_{T_\omega} (V \otimes \OC_{\tgg}) ) \bigr)=0 \qquad \text{for $n \neq 0$.}
\end{align*}
We explain the proof of the first condition; the proof of the second one is similar.

Using Lemma~\ref{lem:Hom-tNC-tgg} and~\eqref{eqn:delta-nabla-i^*}, it suffices to prove that the complex of $\OC(\tg^*)$-modules
\[
\Rder\Hom_{\Coh^{\GB}(\tgg)} \bigl( \Xi_{s_1} \circ \cdots \circ \Xi_{s_r} \circ \IS_{T_\omega} (V \otimes \OC_{\tgg}), \nabla^\mu_{\tgg} \bigr)
\]
is concentrated in degree $0$, and free. Now by Proposition~\ref{prop:Hom-equivariant} and Corollary~\ref{cor:Hom-BS-delta-nabla}\eqref{it:Hom-BS-nabla} this complex is isomorphic to
\begin{multline*}
\Inv^{\GB} \bigl( \Rder\Hom_{\Coh(\tgg)}(\Xi_{s_1} \circ \cdots \circ \Xi_{s_r} \circ \IS_{T_\omega} (V \otimes \OC_{\tgg}), \nabla^\mu_{\tgg}) \bigr) \cong \\
\Inv^{\GB} \bigl( V^* \otimes \Rder\Hom_{\Coh(\tgg)}(\Xi_{s_1} \circ \cdots \circ \Xi_{s_r} \circ \IS_{T_\omega} (\OC_{\tgg}), \nabla^\mu_{\tgg}) \bigr) \cong \\
\Inv^{\GB} \bigl( V^* \otimes \Hom_{\Coh(\tgg)}(\Xi_{s_1} \circ \cdots \circ \Xi_{s_r} \circ \IS_{T_\omega} (\OC_{\tgg}), \nabla^\mu_{\tgg}) \bigr) \cong \\
\Rder\Hom_{\Rep(\GB)} \bigl( V, \Hom_{\Coh(\tgg)}(\Xi_{s_1} \circ \cdots \circ \Xi_{s_r} \circ \IS_{T_\omega} (\OC_{\tgg}), \nabla^\mu_{\tgg}) \bigr).
\end{multline*}
Using again Corollary~\ref{cor:Hom-BS-delta-nabla}\eqref{it:Hom-BS-nabla}, the fact that $V$ admits a Weyl filtration, and the fact that the functor $\Hom^n_{\Rep(\GB)}(V,-)$ commutes with direct sums (see~\cite[Lem\-ma~I.4.7]{jantzen}), we obtain that
\[
\Hom^n_{\Db \Rep(\GB)} \bigl( V, \Hom_{\Coh(\tgg)}(\Xi_{s_1} \circ \cdots \circ \Xi_{s_r} \circ \IS_{T_\omega} (\OC_{\tgg}), \nabla^\mu_{\tgg}) \bigr)=0 \quad \text{for $n \neq 0$,}
\]
and that $\Hom_{\Rep(\GB)} \bigl( V, \Hom_{\Coh(\tgg)}(\Xi_{s_1} \circ \cdots \circ \Xi_{s_r} \circ \IS_{T_\omega} (\OC_{\tgg}), \nabla^\mu_{\tgg}) \bigr)$ admits a filtration, as a graded $\OC(\tg^*)$-module, whose associated graded is free. Since this graded module is bounded below, we deduce that $\Hom_{\Rep(\GB)} \bigl( V, \Hom_{\Coh(\tgg)}(\Xi_{s_1} \circ \cdots \circ \Xi_{s_r} \circ \IS_{T_\omega} (\OC_{\tgg}), \nabla^\mu_{\tgg}) \bigr)$ is free over $\OC(\tg^*)$, which finishes the proof.
\end{proof}

\subsection{Costandard objects are coherent sheaves}
\label{ss:costandard-coherent}

Let us first record the following corollary of Proposition~\ref{prop:tensor-tilting}.

\begin{cor}
\label{cor:tensor}
Assume $\GB$ is standard. Let $V$ be a finite dimensional $\GB$-module, and let $\FC$ be in $\ES^{\GB \times \Gm}(\tNC)$.
\begin{enumerate}
\item 
\label{it:tensor-Weyl-standard}
If $V$ admits a Weyl filtration and $\FC$ admits a standard filtration, then $V \otimes \FC$ admits a standard filtration.
\item
\label{it:tensor-good-costandard}
If $V$ admits a good filtration and $\FC$ admits a costandard filtration, then $V \otimes \FC$ admits a costandard filtration.
\end{enumerate}
\end{cor}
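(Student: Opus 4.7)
Plan for Corollary~\ref{cor:tensor}.

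I will prove~\eqref{it:tensor-Weyl-standard}; the proof of~\eqref{it:tensor-good-costandard} is entirely parallel, replacing tilting coresolutions by tilting resolutions and the Ext-criterion $\Ext^{>0}(X,\nabla^\mu_{\tNC}\langle j\rangle)=0$ for $\Delta$-filtrations by its dual $\Ext^{>0}(\Delta^\mu_{\tNC}\langle j\rangle,X)=0$ for $\nabla$-filtrations.

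Two standard facts from the theory of graded highest weight categories will drive the argument. First, an object $X \in \ES^{\GB \times \Gm}(\tNC)$ admits a standard filtration iff $\Ext^{>0}_{D^{\GB\times\Gm}(\tNC)}(X,\nabla^\mu_{\tNC}\langle j\rangle)=0$ for all $\mu \in \XB$ and $j \in \ZM$; in particular, the class of $\Delta$-filtered objects is closed under extensions. Second, every $\Delta$-filtered object admits a finite tilting coresolution
\[
0 \to X \to \TC^0 \to \TC^1 \to \cdots \to \TC^r \to 0
\]
with each $\TC^i \in \Tilt(\ES^{\GB\times\Gm}(\tNC))$, built by iterating the embedding of a standard object into an exotic tilting object whose $\Delta$-flag starts with it; interval-finiteness of $\leq'$ (each coset in $\XB$ being isomorphic to $\ZM_{\geq 0}$, see \S\ref{ss:exotic-t-structure}) ensures the process terminates. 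The analogues in $\Rep(\GB)$ for finite-dimensional modules are classical (Donkin/Ringel).

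The plan is then a two-step reduction to Proposition~\ref{prop:tensor-tilting}. First I would choose tilting coresolutions
\[
0 \to \FC \to \TC^0 \to \cdots \to \TC^m \to 0 \qquad \text{and} \qquad 0 \to V \to V^0 \to \cdots \to V^n \to 0
\]
of $\FC$ in $\ES^{\GB\times\Gm}(\tNC)$ and of $V$ in $\Rep(\GB)$. By Proposition~\ref{prop:tensor-tilting}, every $V^j \otimes \TC^i$ is exotic tilting, hence in particular $\Delta$-filtered. Tensoring the first coresolution by $V^j$ --- an exact operation on $\ES^{\GB\times\Gm}(\tNC)$ by Proposition~\ref{prop:objects-exotic}\eqref{it:free-exact} --- yields an exact sequence
\[
0 \to V^j \otimes \FC \to V^j \otimes \TC^0 \to \cdots \to V^j \otimes \TC^m \to 0,
\]
which splits into short exact sequences $0 \to K^i \to V^j \otimes \TC^i \to K^{i+1} \to 0$ (with $K^0 = V^j \otimes \FC$ and $K^{m+1}=0$). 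A descending induction on $i$ via the Ext-criterion then shows that each $V^j \otimes \FC$ is $\Delta$-filtered. Tensoring the coresolution of $V$ with $\FC$ is exact (each $V^j$ being flat over $\FM$) and gives
\[
0 \to V \otimes \FC \to V^0 \otimes \FC \to \cdots \to V^n \otimes \FC \to 0,
\]
whose terms to the right of $V \otimes \FC$ are now $\Delta$-filtered by the previous step; a second descending induction delivers the desired standard filtration of $V \otimes \FC$.

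The main technical hurdle is establishing the tilting coresolutions inside $\ES^{\GB\times\Gm}(\tNC)$: one must carry out the standard construction in a finite Serre subcategory cut out by a downward-closed subset of $(\XB, \leq')$ containing the labels appearing in a $\Delta$-filtration of $\FC$, and verify that the iterative embedding into a tilting object terminates thanks to the interval-finiteness of $\leq'$. Once that is in place, the remainder of the argument is a routine application of Proposition~\ref{prop:tensor-tilting} combined with bookkeeping via the Ext-criterion.
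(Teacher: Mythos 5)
Your proposal is correct and follows essentially the same route as the paper: both replace $V$ and $\FC$ by finite tilting coresolutions, invoke Proposition~\ref{prop:tensor-tilting} to see that the tensor products of the terms are exotic tilting, and conclude via the $\Ext$-vanishing criterion for standard filtrations. The paper packages the two inductions into a single statement by observing that $V\otimes\FC$ lies in the triangulated subcategory generated by non-positive shifts of objects $M\otimes\TC$ ($M$ a tilting $\GB$-module, $\TC$ an exotic tilting object), whereas you unwind this into explicit short exact sequences and two descending inductions; the content is the same, and your concern about the existence of finite tilting coresolutions is a standard fact in the theory of graded highest weight categories that the paper also takes for granted.
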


\begin{proof}
We prove~\eqref{it:tensor-Weyl-standard}; the proof of~\eqref{it:tensor-good-costandard} is similar. Since $V$ admits a Weyl filtration, it admits a finite right resolution by finite dimensional tilting $\GB$-modules. Similarly, since $\FC$ admits a standard filtration, it admits a right resolution by tilting objects in~$\ES^{\GB \times \Gm}(\tNC)$. It follows that $V \otimes \FC$ belongs to the triangulated subcategory of $D^{\GB \times \Gm}(\tNC)$ generated by objects of the form $M \otimes \TC[i]$ where $M$ is a finite dimensional tilting $\GB$-module, $\TC$ is in $\Tilt(\ES^{\GB \times \Gm}(\tNC))$, and $i \leq 0$. Using Proposition~\ref{prop:tensor-tilting}, we deduce that it belongs to the subcategory generated by non-positive shifts of tilting objects in $\ES^{\GB \times \Gm}(\tNC)$. It follows that
\[
\Hom^j_{D^{\GB \times \Gm}(\tNC)}(V \otimes \FC, \nabla^\mu_{\tNC} \langle n \rangle)=0
\]
for $j>0$, which implies that $V \otimes \FC$ admits a standard filtration.
\end{proof}

Now we can prove that the morphism spaces between standard and costandard exotic sheaves satisfy the following vanishing property. (Note that here we consider morphisms in the derived category of \emph{ordinary} coherent sheaves.)

\begin{prop}
\label{prop:Hom-vanishing}
Assume $\GB$ is standard.
Let $\FC,\GC$ be in $\ES^{\GB \times \Gm}(\tNC)$. If $\FC$ admits a standard filtration and $\GC$ admits a costandard filtration, then we have
\[
\Hom^i_{\Db \Coh(\tNC)}(\FC, \GC) = 0 \qquad \text{for $i \neq 0$.}
\]
\end{prop}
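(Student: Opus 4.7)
The plan is a dévissage followed by a lift to the Grothendieck resolution $\tgg$ and an application of the Bott--Samelson analysis from \S\ref{ss:tgg}--\S\ref{ss:comparison}. First, since the filtrations of $\FC$ and $\GC$ consist of short exact sequences in $\ES^{\GB \times \Gm}(\tNC)$ (hence distinguished triangles in $\Db\Coh(\tNC)$), the long exact sequence for $\Hom^\bullet_{\Db\Coh(\tNC)}(-,-)$ reduces the claim to showing
\[
\Hom^i_{\Db\Coh(\tNC)}(\Delta^\lambda_{\tNC}, \nabla^\mu_{\tNC}) = 0 \quad \text{for all $\lambda, \mu \in \XB$ and all $i \neq 0$.}
\]

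Next, using~\eqref{eqn:delta-nabla-i^*} I would replace these objects by $\Lder i^* \Delta^\lambda_{\tgg}$ and $\Lder i^* \nabla^\mu_{\tgg}$, and establish the non-equivariant analogue
\[
\Rder\Hom_{\Coh(\tNC)}(\Lder i^* \FC', \Lder i^* \GC') \cong \FM \lotimes_{\OC(\tg^*)} \Rder\Hom_{\Coh(\tgg)}(\FC', \GC')
\]
of Lemma~\ref{lem:Hom-tNC-tgg}, by the same base-change and projection-formula argument (using that $\tNC$ is the zero fiber of $\nu : \tgg \to \tg^*$ and the Koszul resolution of $\FM$ over $\OC(\tg^*)$), simply omitting the $\GB$-invariants step. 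It therefore suffices to show that $\Rder\Hom_{\Coh(\tgg)}(\Delta^\lambda_{\tgg}, \nabla^\mu_{\tgg})$ is concentrated in degree~$0$ and free over $\OC(\tg^*)$, since then derived tensor product with $\FM$ collapses to an ordinary tensor product in degree~$0$.

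For this remaining step, I would fix a reduced expression $w_\lambda = \omega s_{i_1} \cdots s_{i_r}$ with $\omega \in \Omega$, so that
\[
\Delta^\lambda_{\tgg} \cong \IS_{T_\omega} \IS_{T_{s_{i_1}}^{-1}} \cdots \IS_{T_{s_{i_r}}^{-1}}(\OC_{\tgg}).
\]
Iteratively replacing each factor $\IS_{T_{s_{i_k}}^{-1}}$ via the distinguished triangle~\eqref{eqn:triangle-reflection-tgg-2} exhibits $\Delta^\lambda_{\tgg}$ as an iterated cone --- involving only grading shifts, no cohomological shifts --- of objects of the form $\IS_{T_\omega} \Xi_{s_{j_1}} \circ \cdots \circ \Xi_{s_{j_k}}(\OC_{\tgg}) \langle m \rangle$. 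Since conjugation by $\Omega$ permutes the simple reflections of $\Waff$, we may commute $\IS_{T_\omega}$ past the $\Xi$'s to put these in the exact form covered by Corollary~\ref{cor:Hom-BS-delta-nabla}\eqref{it:Hom-BS-nabla}. That corollary then provides, for each building block, both the vanishing of higher $\Hom$ into $\nabla^\mu_{\tgg}$ and a $\ZM_{\geq 0}$-filtration whose subquotients are free over $\OC(\tg^*)$; both properties propagate along the distinguished triangles via the long exact sequence of Ext, the freeness because a bounded-below $\ZM_{\geq 0}$-filtered $\OC(\tg^*)$-module with free associated graded is itself free.

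The main obstacle is the bookkeeping in this last step: verifying that the iterated cone construction produces only grading shifts (never cohomological shifts) and that freeness, rather than merely projectivity, persists under extension. Once these checks are in place, both the vanishing and the freeness statements follow routinely, and the non-equivariant base-change is a transparent transcription of the equivariant argument already present in \S\ref{ss:comparison}.
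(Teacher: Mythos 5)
Your overall plan --- d\'evissage to (co)standards, lift to $\tgg$, a non-equivariant analogue of Lemma~\ref{lem:Hom-tNC-tgg}, then conclude by degenerating the derived tensor with $\FM$ --- is quite different from what the paper does. The paper's proof of Proposition~\ref{prop:Hom-vanishing} never leaves $\tNC$: it filters the regular representation $\FM[\GB]$ (injective, hence with a good filtration) by finite-dimensional good-filtration submodules $M_n$, writes each graded piece of $\Hom^i_{\Db\Coh(\tNC)}(\FC,\GC)$ as $\varinjlim_n \Hom^i_{\Db\Coh^{\GB\times\Gm}(\tNC)}(\FC, \GC\otimes M_n\langle -j\rangle)$, and kills the $i\neq 0$ terms using Corollary~\ref{cor:tensor}\eqref{it:tensor-good-costandard}. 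The two ingredients from \S\ref{ss:tgg}--\S\ref{ss:comparison} that you invoke are indeed used by the paper, but one step earlier, to establish Proposition~\ref{prop:tensor-tilting} (which feeds into Corollary~\ref{cor:tensor}). Your non-equivariant transcription of Lemma~\ref{lem:Hom-tNC-tgg} is correct --- just drop $\Inv^\GB$ from the displayed chain of isomorphisms in its proof --- and the ``free over $\OC(\tg^*)$ in degree $0$ $\Rightarrow$ derived tensor collapses'' step is also fine.

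However, the step you yourself flag as ``the main obstacle'' is a genuine gap, not bookkeeping. In the triangle~\eqref{eqn:triangle-reflection-tgg-2}, $\IS_{T_s^{-1}}$ sits at the \emph{left} of the triangle
\[
\IS_{T_s^{-1}}(Y) \longrightarrow \Xi_s(Y) \longrightarrow Y\langle -1\rangle \xrightarrow{\ [1]\ },
\]
so $\IS_{T_s^{-1}}(Y) \cong \cone\bigl(\Xi_s(Y)\to Y\langle -1\rangle\bigr)[-1]$: a cohomological shift $[-1]$ is forced, contrary to your assertion that only internal shifts $\langle m\rangle$ appear. Iterating, $\Delta^\lambda_{\tgg}$ is quasi-isomorphic to a Rouquier-type complex whose term in cohomological degree $k$ is a direct sum of Bott--Samelson objects $\Xi_{s_{j_1}}\cdots\Xi_{s_{j_{r-k}}}\IS_{T_\omega}(\OC_{\tgg})\langle -k\rangle$, for $k=0,\dots,r$. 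Applying $\Rder\Hom_{\Coh(\tgg)}(-,\nabla^\mu_{\tgg})$ and using Corollary~\ref{cor:Hom-BS-delta-nabla}\eqref{it:Hom-BS-nabla} to kill the nonzero columns, you obtain a complex of free $\OC(\tg^*)$-modules concentrated in cohomological degrees $-r,\dots,0$; its cohomology computes $\Hom^\bullet_{\Db\Coh(\tgg)}(\Delta^\lambda_{\tgg},\nabla^\mu_{\tgg})$. For that to live only in degree $0$ you would need this complex to be exact except at the right end, i.e.\ a rather strong injectivity/exactness statement about the maps induced by $\Xi_s\to \id\langle -1\rangle$, which Corollary~\ref{cor:Hom-BS-delta-nabla} does not supply and which is not otherwise available (note that Lemma~\ref{lem:global-coh-nabla-delta} only treats the case where one argument is $\OC_{\tgg}$, precisely because there $\IS_{T_w}$ with $w\in W$ can be unwound into a line bundle). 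This is not a negligible gap: it is exactly the kind of ``highest-weight'' $\Ext$-vanishing that the paper obtains on $\tNC$ only after passing through the equivariant category and the regular-representation trick. Unless you can establish that $\Rder\Hom_{\Coh(\tgg)}(\Delta^\lambda_{\tgg},\nabla^\mu_{\tgg})$ is concentrated in degree $0$ (which would be a nice lemma in its own right, and would simplify the paper), the proposal does not close.
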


\begin{proof}
Consider the regular $\GB$-module $\FM[\GB]$. Since this module is injective, it admits a good filtration; we choose a filtration $0=M_0 \subset M_1 \subset M_2 \subset \cdots \subset \FM[\GB]$ such that $\FM[\GB]=\cup_{n \geq 0} M_n$, and each $M_n$ is a finite dimensional $\GB$-module which admits a good filtration. 

As in~\eqref{eqn:Hom-Gm-equiv}, the vector space $\Hom^i_{\Db \Coh(\tNC)}(\FC, \GC)$ has a natural structure of $\Gm$-module; 
we denote by $\Hom^i_{\Db \Coh(\tNC)}(\FC, \GC)_j$ the factor where $\Gm$ acts by $t \mapsto t^j$.
Then we have $\Hom^i_{\Db \Coh(\tNC)}(\FC, \GC) \cong \bigoplus_{j \in \ZM} \Hom^i_{\Db \Coh(\tNC)}(\FC, \GC)_j$, and natural isomorphisms
\begin{multline*}
\Hom^i_{\Db \Coh(\tNC)}(\FC, \GC)_j \cong 
\mathsf{H}^i ( \mathsf{Inv}^{\GB \times \Gm}(\Rder\Hom_{\Coh(\tNC)}(\FC,\GC) \otimes \FM[\GB] \langle -j \rangle)) \\ 
\cong \varinjlim_{n \geq 0} \mathsf{H}^i \bigl( \mathsf{Inv}^{\GB \times \Gm}(\Rder\Hom_{ \Coh(\tNC)}(\FC,\GC) \otimes M_n \langle -j \rangle) \bigr) \\
\cong \varinjlim_{n \geq 0} \mathsf{H}^i \bigl( \mathsf{Inv}^{\GB \times \Gm}(\Rder\Hom_{\Coh(\tNC)}(\FC,\GC \otimes M_n \langle -j \rangle) ) \bigr) \\
\cong \varinjlim_{n \geq 0} \Hom^i_{\Db \Coh^{\GB \times \Gm}(\tNC)}(\FC, \GC \otimes M_n \langle -j \rangle).
\end{multline*}
(Here the first isomorphism follows from~\cite[Lemma~I.4.7]{jantzen}, the second one uses the fact that cohomology commutes with direct limits, see~\cite[Lemma~I.4.17]{jantzen}, and the last one uses Proposition~\ref{prop:Hom-equivariant}.) If $i \neq 0$, we have  $\Hom^i_{\Db \Coh^{\GB \times \Gm}(\tNC)}(\FC, \GC \otimes M_n \langle -j \rangle) =0$ for all $n, j \in \ZM$ by Corollary~\ref{cor:tensor}\eqref{it:tensor-good-costandard}; the proposition follows.
\end{proof}

The following corollary of Proposition~\ref{prop:Hom-vanishing} was also obtained (by different methods, and under different assumptions) in~\cite[Proposition~8.7]{arider2}.

\begin{cor}
\label{cor:standard-coherent}
Assume $\GB$ is standard. Then for any $\mu \in \XB$ the complex of coherent sheaves $\nabla^{\mu}_{\tNC}$ has cohomology only in degree $0$.
\end{cor}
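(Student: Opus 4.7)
The plan is to apply Proposition~\ref{prop:Hom-vanishing} to $\nabla^\mu_{\tNC}$ paired with line bundles coming from antidominant weights, then promote the resulting cohomological vanishing to sheaf-level vanishing via a $\Gm$-graded Serre vanishing argument on $\Flag$.

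Since $\nabla^\mu_{\tNC} \in \ES^{\GB \times \Gm}(\tNC)$ by Corollary~\ref{cor:nabla-delta-heart}, Corollary~\ref{cor:negative-cohomology} already forces $\mathsf{H}^j(\nabla^\mu_{\tNC}) = 0$ for $j < 0$, and it remains to prove the same for $j > 0$. For any $\lambda \in -\XB^+$, Proposition~\ref{prop:Delta}\eqref{it:Delta-antidom} together with the normalization $\Delta^\lambda_{\tNC} = \Delta^\lambda_0 \langle -\delta(\lambda) \rangle$ identifies $\OC_{\tNC}(\lambda)$ with $\Delta^\lambda_{\tNC} \langle \delta(\lambda) \rangle$, so $\OC_{\tNC}(\lambda)$ admits a (length-one) standard filtration; since $\nabla^\mu_{\tNC}$ is itself costandard, Proposition~\ref{prop:Hom-vanishing} yields
\[
\mathsf{H}^j\bigl(\tNC,\, \nabla^\mu_{\tNC} \otimes_{\OC_{\tNC}} \OC_{\tNC}(\nu)\bigr) = 0 \qquad \text{for all } \nu \in \XB^+ \text{ and all } j \neq 0.
\]

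The main obstacle is extracting from this global vanishing the sheaf-level vanishing of the $\mathsf{H}^q(\nabla^\mu_{\tNC})$ for $q > 0$. Let $\pi \colon \tNC \to \Flag$ be the projection, which is affine. Since $\Gm$ acts trivially on $\Flag$ and on $\OC_\Flag(\nu)$, the $\GB \times \Gm$-equivariance of $\nabla^\mu_{\tNC}$ induces a weight decomposition $\pi_* \nabla^\mu_{\tNC} = \bigoplus_{k \in \ZM} (\pi_* \nabla^\mu_{\tNC})_k$ compatible with twisting by $\OC_\Flag(\nu)$. Each weight piece is a bounded complex of $\GB$-equivariant \emph{coherent} sheaves on $\Flag$, because the $\Gm$-graded $\OC_\Flag$-algebra $\pi_* \OC_{\tNC} \cong \mathrm{S}(\gg/\bg)$ has coherent weight components and $\nabla^\mu_{\tNC}$ is bounded and coherent. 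The vanishing above accordingly decomposes as
\[
\mathsf{H}^j\bigl(\Flag,\, (\pi_* \nabla^\mu_{\tNC})_k \otimes_{\OC_\Flag} \OC_\Flag(\nu) \bigr) = 0 \qquad \text{for all $j \neq 0$, $k \in \ZM$, $\nu \in \XB^+$.}
\]

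Fix $k$. For $\nu \in \XB^+$ large enough that Serre vanishing on $\Flag$ kills the higher cohomology of each of the finitely many nonzero cohomology sheaves of $(\pi_* \nabla^\mu_{\tNC})_k$ twisted by $\OC_\Flag(\nu)$, the hypercohomology spectral sequence degenerates at $E_2$ and the above vanishing becomes
\[
\mathsf{H}^0\bigl(\Flag,\, \mathsf{H}^q\bigl((\pi_* \nabla^\mu_{\tNC})_k\bigr) \otimes_{\OC_\Flag} \OC_\Flag(\nu)\bigr) = 0 \qquad \text{for } q \neq 0.
\]
Enlarging $\nu$ further so that $\OC_\Flag(\nu)$ is very ample, and in particular so that each $\mathsf{H}^q((\pi_* \nabla^\mu_{\tNC})_k) \otimes \OC_\Flag(\nu)$ is globally generated, we conclude $\mathsf{H}^q((\pi_* \nabla^\mu_{\tNC})_k) = 0$ for $q \neq 0$. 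Summing over $k$ gives $\mathsf{H}^q(\pi_* \nabla^\mu_{\tNC}) = 0$ for $q \neq 0$; since $\pi$ is affine, $\pi_* \mathsf{H}^q(\nabla^\mu_{\tNC}) \cong \mathsf{H}^q(\pi_* \nabla^\mu_{\tNC})$ and $\pi_*$ is faithful on coherent sheaves, so $\mathsf{H}^q(\nabla^\mu_{\tNC}) = 0$ for $q \neq 0$, completing the proof.
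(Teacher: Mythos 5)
Your argument is correct and follows the same route as the paper: pair $\nabla^\mu_{\tNC}$ with the standard objects $\OC_{\tNC}(\lambda)$ for $\lambda$ antidominant, use Proposition~\ref{prop:Hom-vanishing} to get vanishing of $\mathsf{H}^i(\tNC, \nabla^\mu_{\tNC} \otimes \OC_{\tNC}(\nu))$ for $\nu$ dominant and $i \neq 0$, then pass to sheaf-level vanishing. The paper compresses the last step into ``as in the proof of Corollary~\ref{cor:negative-cohomology} \dots by standard arguments,'' and your $\Gm$-weight decomposition on $\Flag$ together with Serre vanishing, the hypercohomology spectral sequence, and global generation is a correct and welcome unpacking of exactly those standard arguments.
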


\begin{proof}
For $\lambda \in \XB^+$ and $i \in \ZM$, we have
\[
\mathsf{H}^i(\tNC, \nabla^\mu_{\tNC} \otimes_{\OC_{\tNC}} \OC_{\tNC}(\lambda)) \cong \Hom^i_{\Db \Coh(\tNC)}(\OC_{\tNC}(-\lambda), \nabla^\mu_{\tNC}).
\]
Now by Proposition~\ref{prop:Delta}\eqref{it:Delta-antidom} the object $\OC_{\tNC}(-\lambda)$ is standard; using Proposition~\ref{prop:Hom-vanishing} we deduce that
\[
\mathsf{H}^i(\tNC, \nabla^\mu_{\tNC} \otimes_{\OC_{\tNC}} \OC_{\tNC}(\lambda))=0 \qquad \text{if $i \neq 0$.}
\]
As in the proof of Corollary~\ref{cor:negative-cohomology}, this vanishing property for $\lambda$ sufficiently large implies that $\nabla^\mu_{\tNC}$ is concentrated in degree $0$.
\end{proof}

\begin{remark}
In case $p=0$, Corollary~\ref{cor:standard-coherent} can alternatively be deduced from~\cite[Lemma~12]{bezru-tilting} and~\cite[Lemma~30(b)]{bezru-two}. (Note that the arguments in~\cite{bezru-two} are based on ideas similar to ours.)
\end{remark}

\appendix

\section{Equivariant quasi-coherent sheaves}
\label{sec:appendix}

In this appendix
we briefly review the theory of derived categories of equivariant coherent sheaves. We work in a setting which is much more general than what we actually use in the body of the paper, in order to be able to use this appendix as a reference in~\cite{mr} also.

\begin{remark}
Most of the results discussed in this appendix are also proved in~\cite{hashimoto}. However in this reference the author uses a different, more involved, definition of the derived category of equivariant coherent sheaves, and of derived functors between such categories. These definitions are probably equivalent to ours, but we could not find a detailed reference treating this comparison, and hence preferred to review the theory ``from scratch.''
\end{remark}

\subsection{Setting and assumptions}
\label{ss:setting-appendix}

Let $k$ be a Noetherian commutative ring.
All the $k$-schemes we consider below will be tacitly assumed to be Noetherian (in particular, quasi-compact) and quasi-separated. These assumptions ensure that, if $X$ is such a scheme, the natural functor
\[
i_X \colon D \QCoh(X) \to D \Mod(\OC_X)
\]
(where $\Mod(\OC_X)$ denotes the category of all sheaves of $\OC_X$-modules)
is fully-faithful, see~\cite[Corollary~5.5]{bn}. 

Recall that the category $\QCoh(X)$ has enough injective objects, see~\cite[Proposition~II.1.1]{hartshorne}. Therefore,
if $f \colon X \to Y$ is a morphism of schemes, the direct image functor $f_* \colon \QCoh(X) \to \QCoh(Y)$ admits a right derived functor 
\[
\Rder f_* \colon D^+ \QCoh(X) \to D^+ \QCoh(Y).
\]
Moreover, it follows from~\cite[Theorem~II.7.18]{hartshorne} that the following diagram commutes (up to canonical isomorphism), where 
the lower horizontal arrow is the derived functor of the functor $f_* \colon \Mod(\OC_X) \to \Mod(\OC_Y)$:
\[
\xymatrix@C=1.5cm@R=0.5cm{
D^+ \QCoh(X) \ar[r]^-{\Rder f_*} \ar[d]_-{i_X} & D^+ \QCoh(Y) \ar[d]^-{i_Y} \\
D^+ \Mod(\OC_X) \ar[r]^-{\Rder f_*} & D^+\Mod(\OC_Y).
}
\]

We will also assume (again tacitly) that the following property holds for any scheme $X$ we consider: for any $\FC$ in $\Coh(X)$, there exists a locally free $\OC_X$-module of finite rank $\FC'$ and a surjection $\FC' \twoheadrightarrow \FC$.\footnote{This assumption is satisfied if $X$ is affine, or regular, or quasi-projective over $k$; see~\cite[\S 2.3]{thomason} for references.} This assumption implies that for any $\FC$ in $\QCoh(X)$ there exists $\FC'$ in $\QCoh(X)$ which is flat over $\OC_X$ and a surjection $\FC' \twoheadrightarrow \FC$, see~\cite[\S 2.2]{thomason}. It follows that if $f \colon X \to Y$ is a morphism, the functor $f^* \colon \QCoh(Y) \to \QCoh(X)$ admits a left derived functor
\[
\Lder f^* \colon D^- \QCoh(Y) \to D^- \QCoh(X),
\]
and that the following diagram commutes (up to canonical isomorphism), where 
the lower horizontal arrow is the derived functor of $f^* \colon \Mod(\OC_Y) \to \Mod(\OC_X)$:
\[
\xymatrix@C=1.5cm@R=0.5cm{
D^- \QCoh(Y) \ar[r]^-{\Lder f^*} \ar[d]_-{i_Y} & D^- \QCoh(X) \ar[d]^-{i_X} \\
D^- \Mod(\OC_Y) \ar[r]^-{\Lder f^*} & D^-\Mod(\OC_X).
}
\]

Using these remarks and~\cite[Proposition~3.2.3]{lipman}, we deduce that the functors $\Lder f^*$ and $\Rder f_*$ are adjoint, in the sense that for $\FC$ in $D^- \QCoh(Y)$ and $\GC$ in $D^+ \QCoh(X)$ there exists a canonical (in particular, functorial) isomorphism
\begin{equation}
\label{eqn:adjunction}
\Hom_{D \QCoh(X)}(\Lder f^* \FC, \GC) \simto \Hom_{D \QCoh(Y)}(\FC, \Rder f_* \GC).
\end{equation}

Below we will need the following technical results.

\begin{lem}
\label{lem:tensor-flat}
Let $M$ be a flat $k$-module, and let $\FC \in \Coh(X)$ and $\GC \in \QCoh(X)$.
\begin{enumerate}
\item
\label{it:Hom-tensor}
There exists a canonical isomorphism
\[
\Hom_{\QCoh(X)}(\FC, \GC) \otimes_k M \simto
\Hom_{\QCoh(X)}(\FC, \GC \otimes_k M).
\]
\item
\label{it:inj-tensor}
If $\GC$ is injective in $\QCoh(X)$, then $\GC \otimes_k M$ is injective also.
\end{enumerate}
\end{lem}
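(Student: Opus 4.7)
My plan is to prove (1) first, and then reduce (2) to (1) via a Baer-type criterion. For (1), the natural map sends $\varphi \otimes m$ to the morphism $f \mapsto \varphi(f) \otimes m$. I would first establish the sheafified analogue
\[
\sheafHom_{\OC_X}(\FC, \GC) \otimes_k M \simto \sheafHom_{\OC_X}(\FC, \GC \otimes_k M),
\]
which is local on $X$ and so reduces to the affine case $X = \Spec(A)$: for a finitely presented $A$-module $N$, any $A$-module $N'$, and a flat $k$-module $M$, one needs $\Hom_A(N, N') \otimes_k M \simto \Hom_A(N, N' \otimes_k M)$. This is trivial for $N = A^n$ and then follows for general finitely presented $N$ by the five lemma applied to a presentation $A^m \to A^n \to N \to 0$, using flatness of $M$.

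To recover the global statement I would apply $\Gamma(X, -)$. Since $\FC$ is coherent, $\sheafHom_{\OC_X}(\FC, \GC)$ is quasi-coherent, so the point becomes the following compatibility: for any quasi-coherent sheaf $\mathcal{K}$ on our quasi-compact quasi-separated scheme $X$, the natural map $\Gamma(X, \mathcal{K}) \otimes_k M \to \Gamma(X, \mathcal{K} \otimes_k M)$ is an isomorphism. This follows from the four-term exact sequence
\[
0 \to \Gamma(X, \mathcal{K}) \to \prod_i \Gamma(U_i, \mathcal{K}) \to \prod_j \Gamma(V_j, \mathcal{K})
\]
associated with a finite affine cover $\{U_i\}$ of $X$ whose pairwise intersections are in turn covered by finitely many affines $\{V_j\}$, together with exactness of $(-) \otimes_k M$, its commutation with finite products, and the obvious identity on each affine piece.

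For (2), I would invoke the Baer-type criterion valid in the locally Noetherian Grothendieck category $\QCoh(X)$ (using that $X$ is Noetherian): an object $\IC$ is injective if and only if $\Hom(\FC', \IC) \to \Hom(\FC, \IC)$ is surjective for every injection $\FC \hookrightarrow \FC'$ of coherent sheaves. (The reduction to coherent subobjects proceeds by a standard Zorn argument, using that every nonzero quasi-coherent sheaf on $X$ contains a nonzero coherent subsheaf.) Applied to $\IC = \GC \otimes_k M$ and combined with (1), injectivity of $\GC \otimes_k M$ is reduced to the surjectivity of $\Hom(\FC', \GC) \otimes_k M \to \Hom(\FC, \GC) \otimes_k M$, which follows from injectivity of $\GC$ (giving a surjection before tensoring) and exactness of $(-) \otimes_k M$.

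The main subtle point I expect is the compatibility $\Gamma(X, \mathcal{K}) \otimes_k M \simto \Gamma(X, \mathcal{K} \otimes_k M)$ for flat $M$, since this is the only step in (1) where flatness of $M$ is genuinely used, and the statement is slightly nontrivial for non-affine $X$; the generalized Baer criterion used for (2) is classical but worth recalling explicitly in the final write-up to keep the argument self-contained.
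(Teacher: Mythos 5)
Your proof of part~\eqref{it:Hom-tensor} follows essentially the same outline as the paper's: reduce the sheafified statement to the affine case, where it comes down to the finitely presented module computation, and then take global sections. The only divergence is at the globalization step, where the paper appeals to a projection-formula statement from Hartshorne's \emph{Residues and Duality} (specifically, that $\Gamma(X,\mathcal{K}\otimes_k M) \cong \Gamma(X,\mathcal{K})\otimes_k M$ for $\mathcal{K}$ quasi-coherent and $M$ flat), whereas you spell out a short \v{C}ech-type argument using a finite affine cover and exactness of $(-)\otimes_k M$. Both are fine; yours is more self-contained.

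For part~\eqref{it:inj-tensor} your route is genuinely different from the paper's, and both work. The paper first handles the affine case (Baer criterion over the Noetherian ring $\OC(X)$, combined with~\eqref{it:Hom-tensor}), and then reduces the general case to it by invoking structure theory of injectives in $\QCoh(X)$: any injective is a direct summand of a finite direct sum of $j_*\widetilde{I}$ with $j\colon U\hookrightarrow X$ an affine open and $I$ an injective $\OC(U)$-module, together with $(j_*\widetilde{I})\otimes_k M \cong j_*\widetilde{(I\otimes_k M)}$. You instead apply a Baer-type criterion directly in the locally Noetherian Grothendieck category $\QCoh(X)$ (injectivity tested against monomorphisms of coherent sheaves, via a Zorn argument as you sketch), and then use~\eqref{it:Hom-tensor} on both sides together with right-exactness of $(-)\otimes_k M$ to transfer the surjectivity. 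Your version is uniform and avoids the structure theorem; the paper's version keeps everything as close to the affine case as possible. The one thing worth being careful about in your write-up is that the ``Baer criterion for $\QCoh(X)$'' you invoke really does require $X$ Noetherian (so that coherent subsheaves are the Noetherian objects and every quasi-coherent sheaf is the filtered union of its coherent subsheaves), which holds under the blanket hypotheses of the appendix, and that the Zorn argument uses the fact that the intersection of a coherent subsheaf with an arbitrary quasi-coherent subsheaf is again coherent on a Noetherian scheme. With those points stated, your proof is correct.
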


\begin{proof}
There exists a canonical morphism $\sheafHom_{\OC_X}(\FC, \GC) \otimes_k M \to \sheafHom_{\OC_X}(\FC, \GC \otimes_k M)$. Under our assumptions this morphism is an isomorphism: in fact it suffices to prove this claim when $X$ is affine, and then the same proof as in~\cite[Theorem~7.11]{matsumura} applies. Then, taking global sections and applying the projection formula~\cite[Proposition~II.5.6]{hartshorne}, we deduce~\eqref{it:Hom-tensor}. 

In~\eqref{it:inj-tensor}, let us first assume that $X$ is affine. Then the claim follows from the isomorphism in~\eqref{it:Hom-tensor} and the fact that injectivity can be tested on finitely generated modules (since $\OC(X)$ is Noetherian). To treat the general case, we observe that
any injective object in $\QCoh(X)$ is a direct summand in an injective object which is a finite direct sum of objects of the form $j_* \widetilde{I}$ where $j \colon U \hookrightarrow X$ is an inclusion of an affine open subset, $I$ is an injective $\OC(U)$-module, and $\widetilde{I}$ is the associated quasi-coherent sheaf on $U$. Now we have $(j_* \widetilde{I}) \otimes_k M \cong j_* (\widetilde{I \otimes_k M})$, which reduces the claim to the affine case treated above.
\end{proof}

\subsection{Injective objects}

Let $\HB$ be a flat affine group scheme over $k$.
We denote by $\Rep(\HB)$ the category of (algebraic) representations of $\HB$.

Let $X$ be a $k$-scheme as in~\S\ref{ss:setting-appendix}, endowed with an action of $\HB$. Then we can consider the categories $\Coh^\HB(X)$ and $\QCoh^\HB(X)$ of $\HB$-equivariant coherent and quasi-coherent sheaves on $X$. (See~\cite[\S 1.2]{thomason} for several equivalent definitions of these objects.) Let $a,p \colon \HB \times X \to X$ be the action and projection, respectively. Then we have an ``averaging'' functor
\[
\mathsf{Av}_X \colon \QCoh(X) \to \QCoh^\HB(X)
\]
which sends $\FC$ to $a_* p^* \FC$, with its natural equivariant structure. (Here the functors $p^*$ and $a_*$ are exact, so that we write $p^*$, $a_*$ instead of $\Lder p^*$, $\Rder a_*$.) This functor is exact and right-adjoint to the forgetful functor $\QCoh^\HB(X) \to \QCoh(X)$; in particular it sends injective objects to injective objects. Using this and the fact that $\QCoh(X)$ has enough injectives, one can show that $\QCoh^\HB(X)$ has enough injectives, and moreover that any injective object is a direct summand in an object of the form $\mathsf{Av}_X(\IC)$ where $\IC$ is an injective object of $\QCoh(X)$.

We will freely use the following result, for which we refer to~\cite[Corollary~2.11]{ab}.\footnote{The article~\cite{ab} is written using the language of stacks. However, the proof of their Corollary~2.11 only relies on their Lemma~2.9, which is proved in the setting of equivariant quasi-coherent sheaves in~\cite[Lemma~1.4]{thomason}.}

\begin{lem}
The natural functor $\Db \Coh^\HB(X) \to \Db \QCoh^\HB(X)$ is fully faithful; its essential image consists of objects whose cohomology sheaves are coherent.\qed
\end{lem}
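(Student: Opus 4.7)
The plan is to invoke the standard principle that for a Serre subcategory $\BC$ of a Grothendieck abelian category $\AC$ such that every object of $\AC$ is the filtered union of its subobjects belonging to $\BC$, the natural functor $\Db \BC \to \Db_\BC \AC$ is an equivalence of categories, where $\Db_\BC \AC$ denotes the full subcategory of $\Db \AC$ consisting of complexes with all cohomology objects in $\BC$. The lemma will then follow by taking $\AC := \QCoh^\HB(X)$ and $\BC := \Coh^\HB(X)$.

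First I would verify that $\Coh^\HB(X)$ is a Serre subcategory of $\QCoh^\HB(X)$: if $0 \to \FC \to \EC \to \GC \to 0$ is a short exact sequence in $\QCoh^\HB(X)$, then $\EC$ is coherent if and only if both $\FC$ and $\GC$ are. Since the forgetful functor to $\QCoh(X)$ is exact and detects coherence, this reduces to the analogous non-equivariant statement, which is standard on the Noetherian scheme $X$. Second, I would invoke Thomason's lemma \cite[Lemma~1.4]{thomason} to the effect that every object of $\QCoh^\HB(X)$ is the filtered union of its coherent equivariant subsheaves.

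With these two hypotheses in hand, the general principle applies. For full faithfulness, a standard dimension-shifting (Yoneda) argument reduces the claim to showing that for $\FC, \GC \in \Coh^\HB(X)$ every class in $\Ext^n_{\QCoh^\HB(X)}(\FC, \GC)$ can be represented by an $n$-fold extension whose middle terms lie in $\Coh^\HB(X)$; this is proved by using the filtered-union presentation to iteratively replace each middle term by a coherent subobject without changing the extension class. For the description of the essential image, one proceeds by induction on the cohomological amplitude: given $\GC^\bullet \in \Db_\BC \AC$, the filtered-union presentation together with the coherence of each cohomology sheaf allows one to replace $\GC^\bullet$, one term at a time, by a quasi-isomorphic subcomplex whose terms are coherent.

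The main obstacle is the second step, namely Thomason's filtered-union presentation, since in the equivariant setting one must exhibit subsheaves stable under the $\HB$-action rather than merely coherent subsheaves of the underlying quasi-coherent sheaf. Once this is granted, the remaining arguments are formal homological algebra.
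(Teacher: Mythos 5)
Your argument is correct and is exactly the route the paper takes: the paper gives no proof of its own but cites~\cite[Corollary~2.11]{ab}, noting in a footnote that the only non-formal ingredient is~\cite[Lemma~1.4]{thomason} (the filtered-union presentation of an equivariant quasi-coherent sheaf by its coherent equivariant subsheaves), which is precisely the step you single out as the crux. One small point of care in your full-faithfulness sketch: the ``replace middle terms by coherent subobjects'' device gives surjectivity of $\Ext^n_{\Coh^\HB}(\FC,\GC)\to\Ext^n_{\QCoh^\HB}(\FC,\GC)$ directly, but injectivity also needs an argument (standardly, one shows that the criterion ``for every surjection $A\twoheadrightarrow B$ in $\AC$ with $B\in\BC$ there is $B'\in\BC$ and $B'\to A$ with $B'\to B$ surjective'' holds, using that coherent objects are noetherian, and then invokes the general equivalence criterion once and for all rather than handling the two directions separately).
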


\subsection{Morphisms}
\label{ss:morphisms}


It is well known that if $\FC$ is in $\Coh^\HB(X)$ and $\GC$ is in $\QCoh^\HB(X)$, then the $k$-module $\Hom_{\QCoh(X)}(\FC,\GC)$ naturally carries the structure of an (algebraic) $\HB$-module, and that moreover we have a canonical isomorphism
\begin{equation}
\label{eqn:Hom-equivariant}
\bigl( \Hom_{\QCoh(X)}(\FC,\GC) \bigr)^\HB \cong \Hom_{\QCoh^\HB(X)}(\FC,\GC).
\end{equation}

\begin{lem}
\label{lem:Hom-injective}
Let $\FC$ be in $\QCoh^\HB(X)$, and $\GC$ be an injective object in $\QCoh^\HB(X)$. Then we have
\[
\Ext^n_{\QCoh(X)}(\FC,\GC)=0 \qquad \text{for $n>0$.}
\]
\end{lem}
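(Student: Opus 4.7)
Since any injective object of $\QCoh^\HB(X)$ is a direct summand of $\mathsf{Av}_X(\IC) = a_* p^* \IC$ for some injective $\IC$ in $\QCoh(X)$, it suffices to treat the case $\GC = a_* p^* \IC$. The plan is to construct a natural isomorphism
\[
\Rder\Hom_{\QCoh(X)}(\FC, a_* p^* \IC) \cong \Rder\Hom_{\QCoh(X)}(\FC, \IC \otimes_k k[\HB])
\]
in $\Db(\mathrm{Ab})$; once this is in hand the result is immediate, since by Lemma~\ref{lem:tensor-flat}\eqref{it:inj-tensor} (using that $k[\HB]$ is flat over $k$) the sheaf $\IC \otimes_k k[\HB]$ is injective in $\QCoh(X)$, so the right-hand side is concentrated in degree zero, and passing to $H^n$ yields the vanishing.

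The displayed isomorphism will be assembled from a chain of four quasi-isomorphisms. First, the derived adjunction $a^* \dashv a_*$---both functors are exact because $a$ is flat and affine, the latter since $a$ factors as $p$ composed with the shear automorphism of $\HB \times X$---rewrites the left-hand side as $\Rder\Hom_{\QCoh(\HB \times X)}(a^* \FC, p^* \IC)$. Second, and crucially, transport along the equivariant-structure isomorphism $\theta_\FC \colon a^* \FC \simto p^* \FC$ converts this into $\Rder\Hom_{\QCoh(\HB \times X)}(p^* \FC, p^* \IC)$. Third, the dual adjunction $p^* \dashv p_*$ (again with both functors exact, since $p$ is flat and affine thanks to our hypotheses on $\HB \to \Spec k$) yields $\Rder\Hom_{\QCoh(X)}(\FC, p_* p^* \IC)$. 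Finally, the projection formula together with $p_* \OC_{\HB \times X} \cong \OC_X \otimes_k k[\HB]$ identifies $p_* p^* \IC$ with $\IC \otimes_k k[\HB]$.

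The substantive ingredient is the use of $\theta_\FC$ in the second step: it is what invokes the hypothesis $\FC \in \QCoh^\HB(X)$ in an essential way, converting an Ext computation against a twisted pushforward into one against an ordinary $k[\HB]$-tensor product. Without this input the formalism collapses, and indeed one cannot hope in general to prove the lemma by the stronger route of showing that $\mathsf{Av}_X(\IC) = a_* p^* \IC$ is itself injective in $\QCoh(X)$---already for $\HB = \Gm$ acting on $X = \AM^1$ by scaling and $\IC$ the quasi-coherent sheaf at the generic point, one checks that $a_* p^* \IC$ fails to be divisible, hence is not injective. It is precisely the equivariance of $\FC$ that rescues the desired $\Ext$-vanishing.
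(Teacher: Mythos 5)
Your proof is correct and takes essentially the same route as the paper: reduce to $\GC = \mathsf{Av}_X(\IC) = a_* p^* \IC$, then run the chain of isomorphisms (adjunction along $a$, transport along the equivariance isomorphism $a^*\FC \cong p^*\FC$, adjunction along $p$, projection formula) to land on $\Ext^n_{\QCoh(X)}(\FC, \IC \otimes_k \OC(\HB))$ and invoke Lemma~\ref{lem:tensor-flat}. The closing remark explaining why $\mathsf{Av}_X(\IC)$ itself need not be injective in $\QCoh(X)$ is a nice addition not present in the paper, and it correctly pinpoints where equivariance of $\FC$ enters.
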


\begin{proof}
We can assume that $\GC=\mathsf{Av}_X(\IC)$ for some $\IC$ injective in $\QCoh(X)$. Then
\begin{multline*}
\Ext^n_{\QCoh(X)}(\FC,\GC) = \Ext^n_{\QCoh(X)}(\FC,a_* p^* \IC) \cong \Ext^n_{\QCoh(H \times X)}(a^* \FC, p^*\IC) \cong \\
\Ext^n_{\QCoh(H \times X)}(p^* \FC, p^*\IC) \cong \Ext^n_{\QCoh(X)}(\FC, p_*p^*\IC)
\cong \Ext^n_{\QCoh(X)}(\FC, \IC \otimes_k \OC(\HB)).
\end{multline*}
Now $\IC \otimes_k \OC(\HB)$ is injective by Lemma~\ref{lem:tensor-flat}, and the vanishing follows.
\end{proof}

\begin{cor}
For $\FC$ in $\Db \Coh^\HB(X)$, the functor
\[
\Rder\Hom_{\QCoh(X)}(\FC, -) \colon D^+ \QCoh^\HB(X) \to D^+ \Mod(k)
\]
factors canonically through a functor $D^+ \QCoh^\HB(X) \to D^+ \Rep(\HB)$, which we denote similarly.
\end{cor}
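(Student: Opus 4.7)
The plan is to compute $\Rder\Hom_{\QCoh(X)}(\FC,-)$ by resolving the second variable inside the \emph{equivariant} category $\QCoh^\HB(X)$, exploiting the fact that such a resolution automatically carries an $\HB$-action via the construction recalled around~\eqref{eqn:Hom-equivariant}.

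Concretely, I would fix a bounded representative $\FC^\bullet$ of $\FC$ by $\HB$-equivariant coherent sheaves. Given $\GC$ in $D^+ \QCoh^\HB(X)$, I choose an injective resolution $\GC \simto \IC^\bullet$ in $\QCoh^\HB(X)$, which exists because this category has enough injectives. I then form the total complex
\[
K^\bullet := \Tot\bigl( \Hom_{\QCoh(X)}(\FC^\bullet, \IC^\bullet) \bigr).
\]
By the discussion around~\eqref{eqn:Hom-equivariant}, each entry $\Hom_{\QCoh(X)}(\FC^p, \IC^q)$ is canonically an algebraic $\HB$-module, and the horizontal and vertical differentials are morphisms in $\Rep(\HB)$ because they are induced by morphisms in $\Coh^\HB(X)$ and $\QCoh^\HB(X)$; hence $K^\bullet$ lies in $C^+(\Rep(\HB))$. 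On the other hand, Lemma~\ref{lem:Hom-injective} says precisely that each $\IC^q$ is $\Hom_{\QCoh(X)}(\FC^p,-)$-acyclic in $\QCoh(X)$, so $K^\bullet$ also computes $\Rder\Hom_{\QCoh(X)}(\FC,\GC)$ as an object of $D^+ \Mod(k)$; this provides the desired lift on objects.

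The remaining task is to verify that the assignment $\GC \mapsto K^\bullet$ descends to a well-defined triangulated functor $D^+ \QCoh^\HB(X) \to D^+ \Rep(\HB)$. For this I would run the usual comparison argument entirely inside the equivariant category: any two injective resolutions in $\QCoh^\HB(X)$ are chain homotopy equivalent there, and any morphism in $D^+ \QCoh^\HB(X)$ lifts to a map between chosen equivariant injective resolutions, uniquely up to equivariant homotopy; applying $\Hom_{\QCoh(X)}(\FC^\bullet,-)$ then yields canonical morphisms in $D^+\Rep(\HB)$, and distinguished triangles are handled in the standard way via mapping cones in $\QCoh^\HB(X)$. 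The only genuinely nontrivial ingredient in this whole argument is the non-equivariant acyclicity of equivariant injectives supplied by Lemma~\ref{lem:Hom-injective}; once that is in hand, compatibility with the forgetful functor $D^+\Rep(\HB) \to D^+ \Mod(k)$ is tautological from the construction.
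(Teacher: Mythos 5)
Your proof is correct and follows the same route as the paper: represent $\FC$ by a bounded complex of equivariant coherent sheaves, resolve the second variable by equivariant injectives so that each $\Hom$-group is naturally an $\HB$-module, and use Lemma~\ref{lem:Hom-injective} to identify the result with the non-equivariant $\Rder\Hom$ after forgetting the $\HB$-action. The paper phrases this more compactly as ``take the derived functor of $\Hom^\bullet_{\QCoh(X)}(\FC^\bullet,-)\colon C^+\QCoh^\HB(X)\to C^+\Rep(\HB)$,'' which packages the total-complex construction and the homotopy-uniqueness argument you spell out.
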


\begin{proof}
Fix $\FC^\bullet$ in $C^{\mathrm{b}} \Coh^\HB(X)$ with image $\FC$ in $\Db \Coh^\HB(X)$. Then we consider the functor
\[
\Hom^\bullet_{\QCoh(X)}(\FC^\bullet, -) \colon C^+ \QCoh^\HB(X) \to C^+ \Rep(\HB),
\]
and its derived functor $D^+ \QCoh^\HB(X) \to D^+ \Rep(\HB)$ (which exists since the category $\QCoh^\HB(X)$ has enough injectives). Using Lemma~\ref{lem:Hom-injective} one can easily check that the composition of this derived functor with the forgetful functor $D^+ \Rep(\HB) \to D^+ \Mod(k)$ is the functor $\Rder\Hom_{\QCoh(X)}(\FC, -)$, which proves the claim.
\end{proof}

Consider the derived functor of invariants
\[
\mathsf{Inv}^{\HB} \colon D^+ \Rep(\HB) \to D^+ \Mod(k).
\]

\begin{prop}
\label{prop:Hom-equivariant}
For $\FC$ in $\Db \Coh^{\HB}(X)$ and $\GC$ in $D^+ \QCoh^\HB(X)$, there exists a canonical (in particular, functorial) isomorphism
\[
\mathsf{Inv}^{\HB} \circ \Rder\Hom_{\QCoh(X)}(\FC,\GC) \simto \Rder\Hom_{\QCoh^\HB(X)}(\FC,\GC)
\]
in $D^+ \Mod(k)$.
\end{prop}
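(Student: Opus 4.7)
The plan is to derive the underived identity~\eqref{eqn:Hom-equivariant} (which expresses $\mathsf{Inv}^\HB \circ \Hom_{\QCoh(X)}(\FC,-)$ as $\Hom_{\QCoh^\HB(X)}(\FC,-)$ on $\QCoh^\HB(X)$) by the standard Grothendieck composition argument: it suffices to show that the functor $\Hom_{\QCoh(X)}(\FC,-)$ sends injectives in $\QCoh^\HB(X)$ to $\mathsf{Inv}^\HB$-acyclic objects in $\Rep(\HB)$.

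Concretely, I would choose an injective resolution $\GC \to \IC^\bullet$ in $\QCoh^\HB(X)$. Then $\Hom^\bullet_{\QCoh^\HB(X)}(\FC,\IC^\bullet)$ represents $\Rder\Hom_{\QCoh^\HB(X)}(\FC,\GC)$, while by Lemma~\ref{lem:Hom-injective} the complex $\Hom^\bullet_{\QCoh(X)}(\FC,\IC^\bullet)$ -- naturally a complex of $\HB$-modules by the discussion preceding~\eqref{eqn:Hom-equivariant} -- represents $\Rder\Hom_{\QCoh(X)}(\FC,\GC)$. Once the acyclicity of each $\Hom_{\QCoh(X)}(\FC,\IC^n)$ for $\mathsf{Inv}^\HB$ is established, applying $\mathsf{Inv}^\HB$ termwise computes $\mathsf{Inv}^\HB \circ \Rder\Hom_{\QCoh(X)}(\FC,\GC)$, and~\eqref{eqn:Hom-equivariant} applied termwise identifies the resulting complex canonically with $\Hom^\bullet_{\QCoh^\HB(X)}(\FC,\IC^\bullet)$. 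Canonicity (independence of the chosen resolution) follows from the uniqueness of injective resolutions up to chain homotopy.

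For the acyclicity step I would reduce (as in the proof of Lemma~\ref{lem:Hom-injective}) to injectives of the form $\mathsf{Av}_X(\JC) = a_*p^*\JC$ with $\JC$ injective in $\QCoh(X)$, whose underlying quasi-coherent sheaf is $\JC \otimes_k \OC(\HB)$. Lemma~\ref{lem:tensor-flat}\eqref{it:Hom-tensor} (applied using flatness of $\OC(\HB)$ over $k$) then yields
\[
\Hom_{\QCoh(X)}(\FC, \mathsf{Av}_X(\JC)) \cong \Hom_{\QCoh(X)}(\FC, \JC) \otimes_k \OC(\HB),
\]
and an $\HB$-module of the form $V \otimes_k \OC(\HB)$ with the regular action on the second factor is the coinduction $\mathrm{Ind}_1^\HB V$ from the trivial subgroup; this is $\mathsf{Inv}^\HB$-acyclic because $\mathrm{Ind}_1^\HB$ is exact (as $\OC(\HB)$ is $k$-flat), preserves injectives (being right adjoint to the exact forgetful functor $\Rep(\HB) \to \Mod(k)$), and satisfies $\mathsf{Inv}^\HB \circ \mathrm{Ind}_1^\HB = \mathrm{id}_{\Mod(k)}$, so the Grothendieck spectral sequence degenerates to force $\Rder^i \mathsf{Inv}^\HB(\mathrm{Ind}_1^\HB V) = 0$ for $i > 0$. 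The main obstacle will be verifying that under the displayed isomorphism the natural ``conjugation'' $\HB$-action on the left corresponds precisely to the regular action on the $\OC(\HB)$ factor on the right; this is a direct but somewhat delicate unwinding of the averaging construction, made possible by the fact that $\JC$ carries no equivariant structure so that the equivariant structure of $\FC$ contributes nothing to the action. Everything else is standard homological algebra.
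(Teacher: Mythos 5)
Your proposal follows essentially the same route as the paper: reduce to checking $\mathsf{Inv}^\HB$-acyclicity of $\Hom_{\QCoh(X)}(\FC,\GC)$ when $\FC\in\Coh^\HB(X)$ and $\GC$ is injective in $\QCoh^\HB(X)$, pass to $\GC=\mathsf{Av}_X(\JC)$, identify the underlying module as a tensor product against $\OC(\HB)$ using Lemma~\ref{lem:tensor-flat}\eqref{it:Hom-tensor}, and conclude via acyclicity of such modules (which the paper attributes to \cite[Lemma~I.4.7]{jantzen}). One point of caution: your justification that ``the equivariant structure of $\FC$ contributes nothing to the action'' is not accurate. The conjugation action on $\Hom_{\QCoh(X)}(\FC,\mathsf{Av}_X(\JC))$ genuinely involves the equivariant structure of $\FC$ (already visible in the toy case $X=\Spec k$, where one obtains $V^*\otimes I\otimes k[\HB]$ with a diagonal action), so the module is a priori of the form $M\otimes_k\OC(\HB)$ with a possibly nontrivial diagonal $\HB$-action rather than the pure coinduction $\mathrm{Ind}_1^\HB$ you describe. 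This does not ultimately break the argument, because the tensor identity (see \cite{jantzen}, \S I.3) shows any such diagonal module is isomorphic to a coinduced one and hence $\mathsf{Inv}^\HB$-acyclic, but the reduction to $\mathrm{Ind}_1^\HB$ cannot rest on the reason you give. The paper avoids this subtlety by invoking the appropriate statement from Jantzen directly.
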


\begin{proof}
Using~\eqref{eqn:Hom-equivariant} and basic properties of derived functors, we obtain a functorial morphism
\[
\Rder\Hom_{\QCoh^\HB(X)}(\FC,\GC) \to \mathsf{Inv}^{\HB} \circ \Rder\Hom_{\QCoh(X)}(\FC,\GC)
\]
for $\FC$ in $\Db\Coh^\HB(X)$ and $\GC$ in $D^+ \QCoh^\HB(X)$. To prove that this morphism is an isomorphism it suffices to prove that if $\FC$ is in $\Coh^\HB(X)$ and $\GC$ is an injective object in $\QCoh^\HB(X)$, then we have 
\[
\mathsf{H}^n \bigl( \mathsf{Inv}^\HB(\Hom_{\QCoh(X)}(\FC,\GC)) \bigr) = 0 \qquad \text{for $n>0$.}
\]
To prove this fact we can assume that $\GC=\mathsf{Av}_X(\IC)$ for some $\IC$ injective in $\QCoh(X)$. Then as in the proof of Lemma~\ref{lem:Hom-injective} we have
\[
\Hom_{\QCoh(X)}(\FC,\GC) \cong \Hom_{\QCoh(X)}(\FC, \GC \otimes_k \OC(\HB)) \cong \Hom_{\QCoh(X)}(\FC, \GC) \otimes_k \OC(\HB)
\]
by Lemma~\ref{lem:tensor-flat}\eqref{it:Hom-tensor}.
Then the claim follows from~\cite[Lemma~I.4.7]{jantzen}.
\end{proof}

\subsection{Invariants and base-change}
\label{ss:inv-base-change}

In this subsection we assume that $k$ has finite global dimension. We also make the following assumption: for any algebraic representation $M$ of $\HB$, there exists an algebraic representation $M'$ of $\HB$ which is flat over $k$ and a surjective $\HB$-equivariant morphism $M' \twoheadrightarrow M$.\footnote{This assumption is of course automatic if $k$ is a field. It is also satisfied if $\HB$ is a split reductive group over $k$, see~\cite[\S2.2 \& Corollary~2.9]{thomason}. See~\cite[Section~2]{thomason} for various other sufficient conditions.} We let $k'$ be a Noetherian commutative $k$-algebra, and denote by $\HB'$ the base-change of $\HB$ to $k'$. Then we have a natural forgetful functor $\Rep(\HB') \to \Rep(\HB)$.

\begin{lem}
\label{lem:Inv-for}
If $M$ is an injective object in $\Rep(\HB')$, then we have
\[
\mathsf{H}^n(\Inv^\HB(M))=0 \qquad \text{for $n>0$.}
\]
\end{lem}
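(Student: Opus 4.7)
The plan is to reduce the statement to showing that $I \otimes_k k[\HB]$ is $\Inv^\HB$-acyclic for any $k$-module $I$, via a two-step argument.

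First, I would analyze the structure of injectives in $\Rep(\HB')$. The forgetful functor $\Rep(\HB') \to \Mod(k')$ admits a right adjoint $V \mapsto V \otimes_{k'} k'[\HB']$ (with $\HB'$ acting via the right regular action on $k'[\HB']$), which is exact because $\HB'$ is flat over $k'$. Embedding $M$ into an injective $k'$-module $I$, I then compose the unit of coaction $M \hookrightarrow M \otimes_{k'} k'[\HB']$ (injective because the counit $\id \otimes \epsilon$ retracts it) with the $\HB'$-equivariant inclusion $M \otimes_{k'} k'[\HB'] \hookrightarrow I \otimes_{k'} k'[\HB']$; the $\HB'$-injectivity of $M$ splits this composition, exhibiting $M$ as a direct summand of $I \otimes_{k'} k'[\HB']$. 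Using the identification $k'[\HB'] \cong k[\HB] \otimes_k k'$ and the standard isomorphism $I \otimes_{k'} (k[\HB] \otimes_k k') \cong I \otimes_k k[\HB]$, I obtain an $\HB$-equivariant isomorphism $I \otimes_{k'} k'[\HB'] \cong I \otimes_k k[\HB]$, where $\HB$ acts trivially on $I$ and by the right regular representation on $k[\HB]$.

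Next I would apply an Eckmann--Shapiro type argument. The forgetful functor $F \colon \Rep(\HB) \to \Mod(k)$ is exact, with right adjoint $G = (-) \otimes_k k[\HB]$, which is itself exact since $k[\HB]$ is $k$-flat. Since $G$ is right adjoint to an exact functor, it preserves injective objects; hence an injective resolution $I \to J^\bullet$ in $\Mod(k)$ yields an injective resolution $I \otimes_k k[\HB] \to J^\bullet \otimes_k k[\HB]$ in $\Rep(\HB)$. Applying $\Hom_{\Rep(\HB)}(k, -)$ and using the adjunction identification $\Hom_{\Rep(\HB)}(k, V \otimes_k k[\HB]) \cong \Hom_k(k, V)$ termwise, I obtain
\[
\mathsf{H}^n(\Inv^\HB(I \otimes_k k[\HB])) \cong \Ext^n_{\Mod(k)}(k, I),
\]
which vanishes for $n > 0$ because $k$ is free over itself. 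Passing to the direct summand $M$ of $I \otimes_k k[\HB]$ completes the argument.

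The main potential obstacle is the bookkeeping in the first step: one must check that the coaction-unit is really injective and that all the tensor identifications are genuinely $\HB$-equivariant (not merely $k$-linear). It is also worth noting that this argument does not appear to use the finite global dimension hypothesis on $k$, nor the assumption on existence of flat surjections onto $\HB$-modules; these are presumably invoked in other results of the subsection.
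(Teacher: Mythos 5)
Your proof is correct, but it takes a genuinely different route from the paper's, which is considerably shorter. The paper observes that the cohomology of $\Inv^\HB$ can be computed by the Hochschild complex $C^\bullet(\HB, M)$ (Jantzen, \S I.4.16), and that $C^\bullet(\HB, M)$ is \emph{literally the same complex} as $C^\bullet(\HB', M)$: since $M$ is already a $k'$-module and $k'[\HB'] \cong k[\HB] \otimes_k k'$, the $k$-linear and $k'$-linear tensor products in the two Hochschild complexes coincide. As $M$ is injective in $\Rep(\HB')$, the complex $C^\bullet(\HB',M)$ is acyclic in positive degrees, and one is done in two lines. Your argument instead proves the stronger structural fact that every injective $\HB'$-module is, after forgetting, a direct summand of a cofree $\HB$-module $I \otimes_k k[\HB]$, and then invokes the acyclicity of cofree modules via an Eckmann--Shapiro / cofree-resolution argument. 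This buys you a cleaner conceptual picture (and the acyclicity of cofree modules is itself a useful fact, essentially Jantzen's Lemma I.4.7, which the paper uses elsewhere), but at the cost of some equivariance bookkeeping in the identifications $I \otimes_{k'} k'[\HB'] \cong I \otimes_k k[\HB]$ that the Hochschild-complex argument bypasses entirely. Your closing observation is also correct: neither the finite global dimension of $k$ nor the flat-surjection hypothesis on $\Rep(\HB)$ is needed here; those are recorded at the head of the subsection because they are required for the construction of the derived base-change functor in Proposition~\ref{prop:Inv-tensor}.
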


\begin{proof}
The cohomology groups $\mathsf{H}^n(\Inv^\HB(M))$ can be computed using the Hochschild complex $C^\bullet(\HB, M)$, see~\cite[\S I.4.16]{jantzen}. Now it is clear that $C^\bullet(\HB, M)$ is canonically isomorphic to $C^\bullet(\HB', M)$, which computes $\Inv^{\HB'}(M)$; therefore its positive cohomology groups vanish since $M$ is injective.
\end{proof}

Let $\For^\HB \colon D^+ \Rep(\HB') \to D^+ \Rep(\HB)$ and $\For \colon D^+ \Mod(k') \to D^+ \Mod(k)$ be the obvious forgetful functors. Then Lemma~\ref{lem:Inv-for}
implies, by standard arguments, that there exists a canonical isomorphism of functors
\begin{equation}
\label{eqn:Inv-for}
\Inv^\HB \circ \For^\HB \simto \For \circ \Inv^{\HB'}.
\end{equation}

Using our assumptions (in particular the fact that $k$ has finite global dimension), we see that the functor $k' \otimes_k (-) \colon \Rep(\HB) \to \Rep(\HB')$ admits a left derived functor
\[
k' \lotimes_k (-) \colon D^+ \Rep(\HB) \to D^+ \Rep(\HB'),
\]
see in particular~\cite[Corollary~I.5.3($\gamma$)]{hartshorne}.
Similarly, we have a derived ``extension of scalars'' functor $k' \lotimes_k (-) \colon D^+ \Mod(k) \to D^+ \Mod(k')$.

\begin{prop}
\label{prop:Inv-tensor}
There exists a canonical isomorphism
\[
k' \lotimes_k \mathsf{Inv}^{\HB}(-) \simto \mathsf{Inv}^{\HB'}( k' \lotimes_k (-))
\]
of functors from $D^+ \Rep(\HB)$ to $D^+ \Mod(k')$.
\end{prop}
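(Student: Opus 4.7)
The plan is to construct the natural transformation by universal properties and then verify it is an isomorphism using the explicit Hochschild cochain model for $R\Inv^{\HB}$ that already appeared in the proof of Lemma~\ref{lem:Inv-for}.

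First, I would construct the natural morphism
\[
\alpha \colon k' \lotimes_k \Inv^\HB(-) \longrightarrow \Inv^{\HB'}(k' \lotimes_k (-))
\]
by deriving the obvious underived transformation $k' \otimes_k M^\HB \to (k' \otimes_k M)^{\HB'}$ (any $k$-linear combination of $\HB$-invariants in $M$ becomes, after scalar extension, an $\HB'$-invariant in $k' \otimes_k M$). This morphism is induced via the universal properties of the left derived functor $k' \lotimes_k (-)$ and the right derived functor $\Inv^{\HB'}$.

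Second, I would verify that $\alpha_M$ is a quasi-isomorphism whenever $M$ is a $k$-flat object of $\Rep(\HB)$. Recall from Lemma~\ref{lem:Inv-for} the Hochschild complex $C^\bullet(\HB, M) = M \otimes_k \OC(\HB)^{\otimes \bullet}$; by a standard double-complex argument (resolve $M$ by injectives, apply $C^p(\HB, -)$ termwise, and use that each $C^p(\HB, -)$ is exact and that $C^\bullet(\HB, -)$ is acyclic in positive degrees on injectives) one sees that $C^\bullet(\HB, M)$ represents $\Inv^\HB(M)$ in $D^+ \Mod(k)$. When $M$ is $k$-flat, each $C^p(\HB, M)$ is a $k$-flat module (as $\HB$ is flat over $k$), so
\[
k' \lotimes_k C^\bullet(\HB, M) \;\cong\; k' \otimes_k C^\bullet(\HB, M).
\]
Using the canonical isomorphism $k' \otimes_k \OC(\HB) \cong \OC(\HB')$ and the compatibility of iterated tensor products with scalar extension, one obtains an isomorphism of complexes
\[
k' \otimes_k C^\bullet(\HB, M) \;\weq\; C^\bullet(\HB', k' \otimes_k M),
\]
and the right-hand side represents $\Inv^{\HB'}(k' \otimes_k M) = \Inv^{\HB'}(k' \lotimes_k M)$ (the second identification uses $k$-flatness of $M$ again). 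One then checks that the resulting isomorphism coincides with $\alpha_M$.

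Third, to extend to arbitrary $M^\bullet \in D^+ \Rep(\HB)$, I would use the standing assumption that every object of $\Rep(\HB)$ is a quotient of a $k$-flat $\HB$-module, together with the finite global dimension of $k$, to produce a quasi-isomorphism $P^\bullet \to M^\bullet$ from a bounded-below complex $P^\bullet$ of $k$-flat $\HB$-modules (via a Cartan--Eilenberg-style double complex, truncated using the finite Tor-dimension of $k$). Since $\alpha$ is a morphism between triangulated functors and is an isomorphism on each $k$-flat module concentrated in a single degree, it is an isomorphism on each such $P^\bullet$, hence on $M^\bullet$; a way-out lemma \`a la~\cite[Proposition~I.7.1]{hartshorne} formalizes this passage. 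The main obstacle is this last step: one must arrange $k$-flat resolutions in $\Rep(\HB)$ that remain inside $D^+$, which is exactly where the finite global dimension hypothesis on $k$ is used.
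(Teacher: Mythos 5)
Your overall strategy is correct and shares the crucial technical ingredient with the paper's proof — the Hochschild complex model $C^\bullet(\HB,-)$ for $\Inv^\HB$ — but the two arguments apply the ``finite resolution'' step in different places, and the paper's variant avoids the more delicate parts of yours. The paper constructs $\eta$ by adjunction from the counit $\iota_\HB \circ \Inv^\HB \to \mathrm{id}$ (where $\iota_\HB$ gives a $k$-complex the trivial $\HB$-action), reduces to checking that $\For(\eta)$ is an isomorphism (since $\For$ is conservative), fixes once and for all a \emph{finite} projective resolution $P^\bullet \to k'$ over $k$ (this is where the finite global dimension of $k$ is used), and then observes that for \emph{any} $M^\bullet$ in $D^+\Rep(\HB)$ both sides of $\For(\eta)$ are computed by the totalizations of $P^\bullet \otimes_k C^\bullet(\HB, M^\bullet)$ and $C^\bullet(\HB, P^\bullet \otimes_k M^\bullet)$, which are canonically isomorphic already at the level of complexes. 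No resolution of $M^\bullet$ and no way-out argument are needed.

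Your route instead verifies the isomorphism on a single $k$-flat module $M$ and then bootstraps to $D^+\Rep(\HB)$ via $k$-flat resolutions of $M^\bullet$ inside $\Rep(\HB)$. This works, but it is heavier in two places that your sketch elides slightly. First, the claim ``$k' \lotimes_k C^\bullet(\HB, M) \cong k'\otimes_k C^\bullet(\HB,M)$'' is not automatic from termwise flatness of the \emph{bounded-below} (hence unbounded above) Hochschild complex; one also needs the finite flat dimension of $k'$ over $k$ — i.e.\ the finite global dimension hypothesis — to make the comparison spectral sequence converge, so this hypothesis is in fact used twice in your argument (here, and again to truncate the flat resolutions of $M^\bullet$), whereas the paper uses it once. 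Second, the bootstrapping step (``isomorphism on flat modules implies isomorphism on bounded-below complexes of flats, hence on $D^+$'') requires a genuine way-out/convergent-spectral-sequence argument that you correctly flag but do not carry out, plus the existence of bounded-below $k$-flat resolutions in $\Rep(\HB)$, which in turn relies on the standing assumption about flat covers together with the finite-global-dimension truncation. None of this is wrong, but the paper's decision to resolve $k'$ rather than $M^\bullet$ collapses all of these steps into a single elementary identity of double complexes, and is the more economical route.
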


\begin{proof}
Let $\iota_\HB \colon D^+ \Mod(k) \to D^+ \Rep(\HB)$ be the natural functor sending a complex to itself with the trivial action, and similarly for $\iota_{\HB'}$. Then $\mathsf{Inv}^{\HB}$ is right-adjoint to $\iota_\HB$, so that there exists a canonical morphism $\iota_\HB \circ \mathsf{Inv}^\HB \to \mathrm{id}$. We deduce a canonical morphism
\[
\iota_{\HB'}  \bigl( k' \lotimes_k (\mathsf{Inv}^\HB(-)) \bigr) \cong k' \lotimes_k (\iota_\HB \circ \mathsf{Inv}^\HB(-)) \to k' \lotimes_k(-).
\]
By adjunction, we deduce a canonical morphism $\eta \colon k' \lotimes_k \mathsf{Inv}^{\HB}(-) \to \mathsf{Inv}^{\HB'}( k' \lotimes_k (-))$.

To prove that $\eta$ is an isomorphism, it suffices to prove that its composition with 
$\For$
is an isomorphism. However, if we fix a finite projective resolution $P^\bullet \to k'$ of $k'$ over $k$, then the composition $\For \bigl( k' \lotimes_k(-) \bigr)$ identifies with the functor induced by the exact functor $M^\bullet \mapsto P^\bullet \otimes_k M^\bullet$ (for $M^\bullet$ a complex of $k$-modules). Similarly, using~\eqref{eqn:Inv-for}, we have
\[
\For \circ \mathsf{Inv}^{\HB'}( k' \lotimes_k (-)) \cong \Inv^\HB \bigl( \For^\HB (k' \lotimes_k (-)) \bigr),
\]
and the functor $\For^\HB (k' \lotimes_k (-))$ also identifies with the functor $M^\bullet \mapsto P^\bullet \otimes_k M^\bullet$ (this time for $M^\bullet$ a complex of representations of $\HB$). Now the functor $\Inv^\HB$ identifies with the functor sending a complex $M^\bullet$ to the total complex of the double complex $C^\bullet(\HB, M^\bullet)$ (with the notation of~\cite[\S I.4.14]{jantzen}), and $\For(\eta)$ is induced by the obvious isomorphism
\[
P^\bullet \otimes_k C^\bullet(\HB, M^\bullet) \simto C^\bullet(\HB, P^\bullet \otimes_k M^\bullet).
\]
Hence it is indeed an isomorphism.
\end{proof}

\subsection{Direct and inverse image functors}

Let now $X$ and $Y$ be two $k$-schemes as in~\S\ref{ss:setting-appendix}, endowed with $\HB$-actions, and consider an $\HB$-equivariant morphism $f \colon X \to Y$. Then we have natural adjoint functors
\[
f^*_\HB \colon \QCoh^\HB(Y) \to \QCoh^\HB(X), \qquad f_*^\HB \colon \QCoh^\HB(X) \to \QCoh^\HB(Y).
\]

\begin{lem}
\label{lem:pushforward-injective}
Let $\FC$ be an injective object of $\QCoh^\HB(X)$. Then we have
\[
\Rder^n f_*(\FC)=0 \qquad \text{for $n>0$.}
\]
\end{lem}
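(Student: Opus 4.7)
The plan is to reduce to ``averaged'' injectives and then invoke flat base change. First I would use the observation recalled just before the lemma that every injective object of $\QCoh^\HB(X)$ is a direct summand of $\mathsf{Av}_X(\IC) = a_{X,*} p_X^* \IC$ for some injective $\IC$ in $\QCoh(X)$; since the functors $\Rder^n f_*$ commute with direct summands, I may assume $\FC = \mathsf{Av}_X(\IC)$.

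Next I would exploit two commutative squares coming from the $\HB$-equivariance of $f$: the identity $f \circ a_X = a_Y \circ (\mathrm{id}_\HB \times f)$ (with $a_Y$ the action on $Y$), and the manifestly Cartesian square $f \circ p_X = p_Y \circ (\mathrm{id}_\HB \times f)$. Since $\HB$ is affine and flat over $k$, the projections $p_X, p_Y$ are flat affine; moreover $a_X$ factors as the composition of $p_X$ with the automorphism $\sigma \colon (h,x) \mapsto (h, h \cdot x)$ of $\HB \times X$, so $a_X$ is flat affine as well, and likewise for $a_Y$. Consequently $a_{X,*}$ and $a_{Y,*}$ are exact on quasi-coherent sheaves, and $a_{X,*}$ moreover preserves injectives since its left adjoint $a_X^*$ is exact.

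Combining these facts, I would compute
\begin{align*}
\Rder f_* \mathsf{Av}_X(\IC)
&\cong \Rder(f \circ a_X)_*(p_X^* \IC) \\
&= \Rder(a_Y \circ (\mathrm{id}_\HB \times f))_*(p_X^* \IC) \\
&\cong a_{Y,*} \Rder(\mathrm{id}_\HB \times f)_*(p_X^* \IC) \\
&\cong a_{Y,*} \, p_Y^* \, \Rder f_* \IC,
\end{align*}
where the first isomorphism uses that $a_{X,*}$ preserves injectives, the third uses exactness of $a_{Y,*}$, and the fourth is flat base change along the Cartesian square with vertical arrows $p_X, p_Y$. Since $\IC$ is injective in $\QCoh(X)$ and $X$ is Noetherian, $\IC$ is flabby and in particular $f_*$-acyclic, so $\Rder f_* \IC = f_* \IC$ is concentrated in degree $0$. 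Hence $\Rder f_* \mathsf{Av}_X(\IC) \cong \mathsf{Av}_Y(f_* \IC)$ is also concentrated in degree $0$, giving the desired vanishing.

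The main technical point to verify is the applicability of flat base change in the quasi-coherent setting under our running hypotheses; this is standard for morphisms of Noetherian quasi-separated schemes (e.g.\ in the form of Lipman's base change theorem), so I do not expect a real obstacle here, only the bookkeeping of compatibilities.
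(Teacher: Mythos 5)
Your proof follows exactly the same approach as the paper's (reduce to $\FC = \mathsf{Av}_X(\IC)$, then apply flat base change to obtain $\Rder f_*\mathsf{Av}_X(\IC) \cong \mathsf{Av}_Y(f_*\IC)$), just spelling out in detail the verification that the paper leaves as "one can check using the flat base change theorem." The only minor remark is that invoking flabbiness of $\IC$ is unnecessary: since $\Rder f_*$ on $D^+\QCoh(X)$ is by construction computed via injective resolutions in $\QCoh(X)$, the identification $\Rder f_*\IC = f_*\IC$ in degree $0$ is immediate.
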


\begin{proof}
We can assume that $\FC=\mathsf{Av}_X(\IC)$ as in the proof of Lemma~\ref{lem:Hom-injective}. Then using the flat base change theorem one can check that
$\Rder f_*(\FC) \cong \mathsf{Av}_Y(\Rder f_*(\IC))$
(where we have omitted the forgetful functors, and written $\mathsf{Av}_Y$ for the derived functor of the exact functor $\mathsf{Av}_Y$.) The claim follows.
\end{proof}

\begin{prop}
\label{prop:direct-image}
The functor $f_*^\HB \colon \QCoh^\HB(X) \to \QCoh^\HB(Y)$ admits a right derived functor $\Rder f^\HB_* \colon D^+ \QCoh^\HB(X) \to D^+ \QCoh^\HB(Y)$. Moreover the following diagram commutes, where the vertical arrows are the natural forgetful functors:
\begin{equation}
\label{eqn:diag-direct-image}
\vcenter{
\xymatrix@C=1.5cm@R=0.5cm{
D^+ \QCoh^\HB(X) \ar[r]^-{\Rder f^\HB_*} \ar[d] & D^+ \QCoh^\HB(Y) \ar[d] \\
D^+ \QCoh(X) \ar[r]^-{\Rder f_*} & D^+ \QCoh(Y).
}
}
\end{equation}
If $f$ and $g$ are composable $\HB$-equivariant morphisms, then there exists a canonical isomorphism
$\Rder (f \circ g)^\HB_* \cong \Rder f^\HB_* \circ \Rder g^\HB_*$.
\end{prop}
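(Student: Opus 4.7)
The plan is to establish the three assertions in order using standard homological algebra. Existence of $\Rder f^\HB_*$ on $D^+\QCoh^\HB(X)$ is immediate: the category $\QCoh^\HB(X)$ has enough injectives (as recalled earlier in the appendix) and $f^\HB_*$ is left exact, so the right derived functor exists by the usual Cartan--Eilenberg construction.

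For the commutativity of \eqref{eqn:diag-direct-image}, I would exploit Lemma~\ref{lem:pushforward-injective}: injective objects of $\QCoh^\HB(X)$ are $f_*$-acyclic after forgetting the equivariant structure. Given $\FC$ in $D^+\QCoh^\HB(X)$, I would choose an injective resolution $\FC \to \IC^\bullet$ in $\QCoh^\HB(X)$, so that $\Rder f^\HB_*(\FC)$ is represented by $f^\HB_*(\IC^\bullet)$. The non-equivariant functor $f_*$ commutes strictly with the forgetful functor on the abelian categories, so the forgetful image of $f^\HB_*(\IC^\bullet)$ is $f_*(\IC^\bullet)$; by Lemma~\ref{lem:pushforward-injective} this is an $f_*$-acyclic resolution of the forgetful image of $\FC$, and hence represents $\Rder f_*$ of that image. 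This yields the desired canonical isomorphism, with naturality in $\FC$ following from the functoriality of injective resolutions.

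For the composition isomorphism, I would invoke the standard Grothendieck theorem on composition of derived functors, whose hypothesis reduces here to showing that $g^\HB_*$ carries injective objects of $\QCoh^\HB(X)$ to $f^\HB_*$-acyclic objects of $\QCoh^\HB(Y)$. Using the commutative square just established (applied to $f$), an object of $\QCoh^\HB(Y)$ is $f^\HB_*$-acyclic if and only if its image in $\QCoh(Y)$ is $f_*$-acyclic. Thus it suffices to show that for $\IC$ injective in $\QCoh^\HB(X)$, the sheaf $g_*(\IC)$ is $f_*$-acyclic. Writing $\IC$ as a direct summand of $\mathsf{Av}_X(\JC) = (a_X)_*(p_X)^*(\JC)$ for some $\JC$ injective in $\QCoh(X)$, the flat base change argument from the proof of Lemma~\ref{lem:pushforward-injective} (using that $a_X$ is isomorphic, as a scheme morphism, to the flat projection $p_X$) gives $g_*(\mathsf{Av}_X(\JC)) \cong \mathsf{Av}_Y(g_*(\JC))$; by the non-equivariant composition theorem, $g_*(\JC)$ is itself $f_*$-acyclic in $\QCoh(Y)$, and a second base-change manipulation shows the same for $\mathsf{Av}_Y(g_*(\JC))$.

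The main technical point will be verifying the base-change identifications involving $\mathsf{Av}$ and checking that they assemble into a canonical, functorial isomorphism; once those are in place, canonicity in all three parts follows uniformly from the universal property of derived functors via injective resolutions.
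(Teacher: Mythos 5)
Your proof of the first two assertions matches the paper exactly: enough injectives gives the derived functor, and Lemma~\ref{lem:pushforward-injective} gives the commutativity of~\eqref{eqn:diag-direct-image}.

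For the composition isomorphism you take a genuinely different (and more laborious) route than the paper. You verify the acyclicity hypothesis of Grothendieck's composition theorem directly: reduce, via the commutative square and conservativity of the forgetful functor, to showing $g_*\IC$ is $f_*$-acyclic in $\QCoh(Y)$, then write $\IC$ as a summand of $\mathsf{Av}_X(\JC)$ and push the averaging functor through $g_*$ and $f_*$ by repeated flat base-change along the affine flat maps $a_\bullet$, $p_\bullet$. This does work (the base-change identities you gesture at are all available in the non-equivariant setting), and you honestly flag the second base-change manipulation as the technical point that needs to be written out. The paper's proof avoids all of this: it observes that $(f\circ g)^\HB_* \cong f^\HB_* \circ g^\HB_*$ on abelian categories always produces a canonical morphism $\xi \colon \Rder(f\circ g)^\HB_* \to \Rder f^\HB_* \circ \Rder g^\HB_*$ (no acyclicity hypothesis needed for the \emph{morphism} to exist), and then shows $\xi$ is an isomorphism by noting that its image under the conservative forgetful functor is the known non-equivariant composition isomorphism, by the commutativity already established. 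The paper's approach is shorter and does not require re-deriving any base-change facts; yours has the merit of explicitly exhibiting the acyclicity needed for Grothendieck's theorem, which some readers may find more transparent, but at the cost of more bookkeeping.
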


\begin{proof}
Since $\QCoh^\HB(X)$ has enough injective objects, the derived functor $\Rder f^\HB_*$ exists. The commutativity of the diagram follows from Lemma~\ref{lem:pushforward-injective}. Then if $f \colon Y \to Z$ and $g \colon X \to Y$ are composable morphisms, since $(f \circ g)^\HB_* \cong f^\HB_* \circ g^\HB_*$, by the general theory of derived functors there exists a canonical morphism $\xi \colon \Rder(f \circ g)^\HB_* \to \Rder f^\HB_* \circ \Rder g^\HB_*$. Since $\Rder(f \circ g)_* \cong \Rder f_* \circ \Rder g_*$, the image of $\xi$ under the forgetful functor $D^+ \QCoh^\HB(Z) \to D^+ \QCoh(Z)$ is an isomorphism. It follows that $\xi$ itself is an isomorphism.
\end{proof}

\begin{remark}
It follows from the commutativity of~\eqref{eqn:diag-direct-image} that the functor $\Rder f^\HB_*$ restricts to a functor $\Db \QCoh^\HB(X) \to \Db \QCoh^\HB(Y)$; see~\cite[Proposition~3.9.2]{lipman}.
\end{remark}

From now on we assume that for every $\FC$ in $\QCoh^\HB(Y)$, there exists $\GC \in \QCoh^\HB(Y)$ which is flat over $\OC_Y$ and a surjection $\GC \twoheadrightarrow \FC$.\footnote{See~\cite{thomason} for various sufficient conditions for this assumption to hold. In particular, by~\cite[Lemma~2.10]{thomason}, it holds if $\HB$ is split reductive and $X$ is normal and quasi-projective over $k$.} Under this assumption, the proof of the following proposition is similar to that of Proposition~\ref{prop:direct-image}.

\begin{prop}
\label{prop:inverse-image}
The functor $f^*_\HB \colon \QCoh^\HB(Y) \to \QCoh^\HB(X)$ admits a left derived functor $\Lder f^*_\HB \colon D^- \QCoh^\HB(Y) \to D^- \QCoh^\HB(X)$. Moreover the following diagram commutes, where the vertical arrows are the natural forgetful functors:
\begin{equation}
\label{eqn:diag-inverse-image}
\vcenter{
\xymatrix@C=1.5cm@R=0.5cm{
D^- \QCoh^\HB(Y) \ar[r]^-{\Lder f_\HB^*} \ar[d] & D^- \QCoh^\HB(X) \ar[d] \\
D^- \QCoh(Y) \ar[r]^-{\Lder f^*} & D^- \QCoh(X).
}
}
\end{equation}
If $f$ and $g$ are composable $\HB$-equivariant morphisms, then there exists a canonical isomorphism
$\Lder (f \circ g)_\HB^* \cong \Lder g_\HB^* \circ \Lder f_\HB^*$.\qed
\end{prop}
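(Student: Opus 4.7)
The plan is to mirror the proof of Proposition~\ref{prop:direct-image}, but with injective resolutions replaced by flat resolutions, using the hypothesis that $\QCoh^\HB(Y)$ has enough flat objects. The key preliminary observation, analogous to Lemma~\ref{lem:pushforward-injective}, is that if $\FC$ is flat in $\QCoh^\HB(Y)$ then its image under the forgetful functor $\QCoh^\HB(Y) \to \QCoh(Y)$ is flat over $\OC_Y$ (since flatness is a local condition on the underlying sheaf of $\OC_Y$-modules, and is not affected by the equivariant structure). Consequently such $\FC$ is $f^*_\HB$-acyclic: any bounded-above complex of flats is sent by $f^*_\HB$ to an object whose image under the forgetful functor computes $\Lder f^*(\FC)$, and since $\FC$ is flat that object has no higher cohomology.

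With this acyclicity in hand, the standard machinery of derived functors via adapted classes (see e.g.\ \cite[Proposition~I.5.1]{hartshorne}) produces the derived functor $\Lder f^*_\HB \colon D^- \QCoh^\HB(Y) \to D^- \QCoh^\HB(X)$, computed by choosing a quasi-isomorphism $P^\bullet \to \FC^\bullet$ with $P^\bullet$ a bounded-above complex of flat objects in $\QCoh^\HB(Y)$ (such resolutions exist by hypothesis) and applying $f^*_\HB$ term-wise. The commutativity of diagram~\eqref{eqn:diag-inverse-image} is immediate from this construction: the image of $P^\bullet$ under the forgetful functor $\QCoh^\HB(Y) \to \QCoh(Y)$ is a bounded-above complex of flat quasi-coherent sheaves quasi-isomorphic to the image of $\FC^\bullet$, and hence computes $\Lder f^*$ applied to the underlying object.

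For the composition statement, let $f \colon Y \to Z$ and $g \colon X \to Y$ be composable $\HB$-equivariant morphisms. Since $(f \circ g)^*_\HB \cong g^*_\HB \circ f^*_\HB$ on the level of abelian categories, the general theory produces a canonical morphism $\xi \colon \Lder(f\circ g)^*_\HB \to \Lder g^*_\HB \circ \Lder f^*_\HB$; here we use that $f^*_\HB$ sends flat objects of $\QCoh^\HB(Y)$ to flat objects of $\QCoh^\HB(X)$ (again because this holds non-equivariantly and flatness is detected on the underlying sheaf), which ensures that the composition $\Lder g^*_\HB \circ \Lder f^*_\HB$ may be computed on flat resolutions of the source. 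To check that $\xi$ is an isomorphism, it suffices to apply the forgetful functor $D^- \QCoh^\HB(Z) \to D^- \QCoh(Z)$: by the commutativity of~\eqref{eqn:diag-inverse-image} and the analogous composition isomorphism $\Lder(f\circ g)^* \cong \Lder g^* \circ \Lder f^*$ in the non-equivariant setting (which holds under our standing assumptions), the image of $\xi$ is an isomorphism, and since the forgetful functor is conservative on such objects, so is $\xi$ itself.

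The only point requiring any care is the first step — verifying that the class of flat objects in $\QCoh^\HB(Y)$ is indeed adapted for $f^*_\HB$. This reduces to the non-equivariant statement via the forgetful functor, so there is no genuine obstacle here; the argument is entirely parallel to the injective/pushforward story treated in Lemma~\ref{lem:pushforward-injective} and Proposition~\ref{prop:direct-image}, and the proof is substantially the same.
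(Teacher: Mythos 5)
Your proof is correct and takes essentially the same approach the paper indicates (the paper explicitly says the proof is "similar to that of Proposition~\ref{prop:direct-image}," replacing injective resolutions by flat ones). All the key steps — that flat objects of $\QCoh^\HB(Y)$ form an adapted class for $f^*_\HB$ because their images under the forgetful functor are flat over $\OC_Y$, that $f^*_\HB$ preserves flats so the composition can be computed on flat resolutions, and that one checks the composition map $\xi$ is an isomorphism by applying the conservative forgetful functor — are exactly the intended dualization of the pushforward argument.
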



Using similar arguments one can check that the bifunctor $(-) \otimes_{\OC_X} (-)$ admits a left derived functor
\[
(-) \lotimes_{\OC_X} (-) \colon D^- \QCoh^\HB(X) \times D^- \QCoh^\HB(X) \to D^- \QCoh^\HB(X)
\]
which is compatible with inverse image functors in the obvious sense.

\begin{remark}
\begin{enumerate}
\item
In the body of the paper, we use the notation $\Lder f^*$ instead of $\Lder f_\HB^*$, $\Rder f_*$ instead of $\Rder f^\HB_*$. This is justified by the commutativity of~\eqref{eqn:diag-direct-image} and~\eqref{eqn:diag-inverse-image}.
\item
In Proposition~\ref{prop:inverse-image}, if
$f$ is flat then the functor $\Lder f^*_\HB$ is defined on the whole of $D \QCoh^\HB(Y)$.
\end{enumerate}
\end{remark}


\subsection{Adjunction, projection formula, and flat base change}


\begin{lem}
\label{lem:Ext-flat-injective}
Let $\FC$ be an object of $\QCoh^\HB(Y)$ which is flat over $\OC_Y$, and let $\GC$ be an injective object of $\QCoh^\HB(X)$. Then we have
\[
\Ext^n_{\QCoh^\HB(Y)}(\FC, f^\HB_*(\GC)) = \Ext^n_{\QCoh(Y)}(\FC, f^\HB_*(\GC)) = 0 \qquad \text{for $n>0$.}
\]
\end{lem}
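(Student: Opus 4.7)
The plan is to transfer both Ext computations from $Y$ to $X$ via derived adjunction, where the injectivity of $\GC$ yields the desired vanishing. First I would observe that $R^n f^\HB_* \GC = 0$ for $n > 0$: by Lemma~\ref{lem:pushforward-injective} this holds for the underlying non-equivariant sheaves, and then the commutativity of~\eqref{eqn:diag-direct-image} together with the conservativity of the forgetful functor $D^+ \QCoh^\HB(Y) \to D^+ \QCoh(Y)$ gives the same vanishing in $\QCoh^\HB(Y)$. In particular $\Rder f^\HB_* \GC \cong f^\HB_* \GC$, and analogously $\Rder f_* \GC \cong f_* \GC = f^\HB_* \GC$ as objects of $D^+ \QCoh(Y)$.

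Next I would invoke the non-equivariant adjunction~\eqref{eqn:adjunction} between $\Lder f^*$ and $\Rder f_*$ together with its equivariant analogue
\[
\Hom_{D\QCoh^\HB(X)}(\Lder f^*_\HB \FC, \GC) \cong \Hom_{D\QCoh^\HB(Y)}(\FC, \Rder f^\HB_* \GC).
\]
The equivariant version is established by an argument exactly parallel to that of~\eqref{eqn:adjunction}: the category $\QCoh^\HB(Y)$ has enough flat objects (by our standing assumption) to compute $\Lder f^*_\HB$, the category $\QCoh^\HB(X)$ has enough injectives to compute $\Rder f^\HB_*$, and~\cite[Proposition~3.2.3]{lipman} applies. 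Since $\FC$ is flat over $\OC_Y$ we have $\Lder f^*_\HB \FC = f^*_\HB \FC$ and $\Lder f^* \FC = f^* \FC$; combining this with the previous step yields
\[
\Ext^n_{\QCoh^\HB(Y)}(\FC, f^\HB_* \GC) \cong \Ext^n_{\QCoh^\HB(X)}(f^*_\HB \FC, \GC), \quad \Ext^n_{\QCoh(Y)}(\FC, f^\HB_* \GC) \cong \Ext^n_{\QCoh(X)}(f^* \FC, \GC).
\]

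Finally, for $n > 0$ the right-hand side of the first isomorphism vanishes by injectivity of $\GC$ in $\QCoh^\HB(X)$, while the right-hand side of the second vanishes by Lemma~\ref{lem:Hom-injective} applied to the equivariant sheaf $f^*_\HB \FC \in \QCoh^\HB(X)$ (whose underlying non-equivariant sheaf is $f^* \FC$). The step that requires the most care is the equivariant derived adjunction, which is not stated explicitly as a lemma earlier in the appendix but whose proof mirrors that of~\eqref{eqn:adjunction} verbatim; beyond this, everything follows formally from tools already established.
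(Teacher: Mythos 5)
Your argument has a circularity problem at the key step. You invoke the equivariant derived adjunction
\[
\Hom_{D\QCoh^\HB(X)}(\Lder f^*_\HB \FC, \GC) \cong \Hom_{D\QCoh^\HB(Y)}(\FC, \Rder f^\HB_* \GC),
\]
but in the paper this is exactly Proposition~\ref{prop:adj-proj}\eqref{it:adjunction}, which is stated and proved \emph{after} Lemma~\ref{lem:Ext-flat-injective}, and whose proof explicitly relies on Lemma~\ref{lem:Ext-flat-injective} (the paper resolves $\FC$ by flats, $\GC$ by injectives, applies the underived adjunction termwise, and then needs precisely the vanishing $\Ext^n_{\QCoh^\HB(Y)}(\text{flat}, f^\HB_*(\text{injective}))=0$ from this lemma to conclude that the resulting $\Hom$-complex computes the derived $\Hom$). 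Your claim that the equivariant derived adjunction can be established ``by an argument exactly parallel to that of~\eqref{eqn:adjunction}'' does not hold up: the non-equivariant adjunction is obtained in the paper by embedding $D\QCoh(X)$ into the big category $D\Mod(\OC_X)$, which admits K-injective and K-flat resolutions of unbounded complexes, and no analogous ambient category is developed or used in the equivariant setting. The subtle point that this avoids is that $f^\HB_*$ need not preserve injectives (since $f^*_\HB$ is not exact), so $f^\HB_*(\GC^\bullet)$ need not be K-injective even when $\GC^\bullet$ is a complex of injectives, and the naive adjoint-resolution argument stalls precisely at the step where Lemma~\ref{lem:Ext-flat-injective} is needed.

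The paper's actual proof sidesteps the equivariant derived adjunction entirely. One may assume $\GC = \mathsf{Av}_X(\IC)$ for some injective $\IC$ in $\QCoh(X)$; then $f^\HB_*(\GC) \cong \mathsf{Av}_Y(f_*\IC)$, so by the adjunction between $\mathsf{Av}_Y$ and the forgetful functor the first $\Ext$ reduces to $\Ext^n_{\QCoh(Y)}(\FC, f_*\IC)$, and this is attacked by the \emph{non-equivariant} derived adjunction~\eqref{eqn:adjunction} together with the flatness of $\FC$ and the injectivity of $\IC$. The second $\Ext$ is handled as you do, via~\eqref{eqn:adjunction} and Lemma~\ref{lem:Hom-injective}, so that part of your argument is fine. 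If you wish to keep your structure, you must first give an independent proof of the equivariant derived adjunction (for instance by proving that $\QCoh^\HB(X)$ is a Grothendieck abelian category and invoking K-injective resolutions), which is a nontrivial addition beyond what the paper sets up.
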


\begin{proof}
We can assume that $\GC=\mathsf{Av}_X(\IC)$ as in the proof of Lemma~\ref{lem:Hom-injective}.
Then (as in the proof of Lem\-ma~\ref{lem:pushforward-injective}) we have $f_*^\HB(\GC) \cong \mathsf{Av}_Y(f_* \IC)$, so that
\[
\Ext^n_{\QCoh^\HB(Y)}(\FC, f^\HB_*(\GC)) \cong \Ext^n_{\QCoh^\HB(Y)}(\FC, \mathsf{Av}_Y(f_*\IC)) \cong \Ext^n_{\QCoh(Y)}(\FC, f_*\IC).
\]
Then using the adjunction $(\Lder f^*, \Rder f_*)$ (see~\eqref{eqn:adjunction}) we have
\[
\Ext^n_{\QCoh(Y)}(\FC, f_*\IC) \cong \Hom^n_{D\QCoh(Y)}(\FC, \Rder f_*\IC) \cong \Hom^n_{D\QCoh(X)}(\Lder f^*\FC, \IC),
\]
and the right-hand side vanishes for $i>0$ since $\Lder f^* \FC$ is concentrated in degree $0$ and $\IC$ is injective. This proves the first claim.

Using Proposition~\ref{prop:direct-image} we have
\begin{multline*}
\Ext^n_{\QCoh(Y)}(\FC, f^\HB_*(\GC)) \cong \Hom^n_{D\QCoh(Y)}(\FC, \Rder f_*(\GC)) \\
\cong \Hom^n_{D\QCoh(X)}(\Lder f^*\FC, \GC) \cong \Ext^n_{\QCoh(X)}(f^*\FC, \GC).
\end{multline*}
Then the claim follows from Lemma~\ref{lem:Hom-injective}.
\end{proof}

\begin{prop}
\label{prop:adj-proj}
Let $f\colon X \to Y$ be an $\HB$-equivariant morphism.
\begin{enumerate}
\item
\label{it:adjunction}
(Adjunction)
For $\FC$ in $D^- \QCoh^\HB(Y)$ and $\GC$ in $D^+ \QCoh^\HB(X)$ there exists a canonical and functorial isomorphism
\[
\Hom_{D \QCoh^\HB(X)}(\Lder f_\HB^* \FC, \GC) \cong \Hom_{D\QCoh^\HB(Y)}(\FC, \Rder f^\HB_*\GC).
\]
Moreover the following diagram commutes, where the vertical arrows are induced by the forgetful functors (see Propositions~{\rm \ref{prop:direct-image}} and~{\rm \ref{prop:inverse-image}}):
\[
\xymatrix@C=1.5cm@R=0.5cm{
\Hom_{D \QCoh^\HB(X)}(\Lder f_\HB^* \FC, \GC) \ar[r]^-{\sim} \ar[d] & \Hom_{D\QCoh^\HB(Y)}(\FC, \Rder f^\HB_*\GC) \ar[d] \\
\Hom_{D \QCoh(X)}(\Lder f^* \FC, \GC) \ar[r]_-{\sim}^-{\eqref{eqn:adjunction}} & \Hom_{D\QCoh(Y)}(\FC, \Rder f_*\GC).
}
\]
\item
\label{it:projection}
(Projection formula)
Let $\FC \in \Db \QCoh^\HB(Y)$, and $\GC \in \Db \QCoh^\HB(X)$. Assume\footnote{This assumption is automatic e.g.~if $X$ and $Y$ are regular schemes.} that the complex $\Lder f^*_\HB(\FC) \lotimes_{\OC_X} \GC$ belongs to $D^b \QCoh^\HB(X)$. Then there exists a canonical isomorphism
\[
\FC \lotimes_{\OC_Y} \Rder f_*^\HB(\GC) \simto \Rder f_*^\HB(\Lder f^*_\HB(\FC) \lotimes_{\OC_X} \GC).
\]
\item
\label{it:flat-base-change}
(Flat base change)
If $v \colon Z \to Y$ is a flat $\HB$-equivariant morphism, and if $v' \colon X \times_Y Z \to X$ and $f' \colon X \times_Y Z \to Z$ are the morphisms obtained by base change, for any $\FC \in \Db \QCoh^\HB(X)$ there exists a canonical isomorphism
\[
v^*_\HB \circ \Rder f^\HB_* \FC \simto \Rder (f')^\HB_* \circ (v')_\HB^* \FC.
\]
\end{enumerate}
\end{prop}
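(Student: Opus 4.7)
The plan is to treat the three parts in order, building on Lemma~\ref{lem:Ext-flat-injective} together with Propositions~\ref{prop:direct-image} and~\ref{prop:inverse-image}. The general strategy is to construct the natural maps via their abelian-category versions, and then verify that they become isomorphisms in the derived category by reducing to the non-equivariant setting through the forgetful functors appearing in those two propositions.

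For part~\eqref{it:adjunction}, I would first replace $\FC$ by a bounded above complex $P^\bullet$ of $\HB$-equivariant sheaves on $Y$ which are flat over $\OC_Y$ (these exist by the standing assumption made before Proposition~\ref{prop:inverse-image}), and $\GC$ by a bounded below complex $I^\bullet$ of injective objects of $\QCoh^\HB(X)$. Then $\Lder f_\HB^* \FC$ is represented by $f_\HB^* P^\bullet$ and $\Rder f^\HB_* \GC$ by $f^\HB_* I^\bullet$. The abelian adjunction provides a canonical isomorphism of double complexes
\[
\Hom_{\QCoh^\HB(X)}^{\bullet,\bullet}(f_\HB^* P^\bullet, I^\bullet) \cong \Hom_{\QCoh^\HB(Y)}^{\bullet,\bullet}(P^\bullet, f^\HB_* I^\bullet).
\]
Passing to total complexes and cohomology yields a candidate isomorphism. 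The crucial point is that the right-hand total complex genuinely computes $\Hom_{D\QCoh^\HB(Y)}(\FC, \Rder f^\HB_* \GC)$: this is where Lemma~\ref{lem:Ext-flat-injective} enters, guaranteeing (via a standard hypercohomology spectral sequence) that the higher $\Ext$-groups from any flat equivariant sheaf to $f^\HB_* I^j$ vanish. Commutativity of the compatibility diagram with~\eqref{eqn:adjunction} is then automatic, since the forgetful functor sends the chosen resolutions to complexes that compute the non-equivariant derived $\Hom$ (using Lemmas~\ref{lem:Hom-injective} and~\ref{lem:pushforward-injective}).

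For part~\eqref{it:projection}, I would construct the projection morphism by noting that at the abelian level, for any $\HB$-equivariant $\OC_Y$-module $\FC$ flat over $\OC_Y$ and any $\GC$ in $\QCoh^\HB(X)$, adjunction from the identity on $f_\HB^* \FC \otimes_{\OC_X} \GC$ yields a canonical morphism
\[
\FC \otimes_{\OC_Y} f^\HB_* \GC \to f^\HB_* \bigl( f_\HB^* \FC \otimes_{\OC_X} \GC \bigr).
\]
Deriving this by resolving $\FC$ with a bounded above complex of flat equivariant sheaves and $\GC$ with a bounded below complex of injectives produces the morphism in~\eqref{it:projection}. To check it is an isomorphism, I would apply the forgetful functor to $D^+ \QCoh(-)$: by Propositions~\ref{prop:direct-image} and~\ref{prop:inverse-image} together with the obvious compatibility of the derived tensor product with the forgetful functor, this reduces to the classical non-equivariant projection formula, valid under the bounded-complex hypothesis we have imposed. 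For part~\eqref{it:flat-base-change}, I would build the base change map using part~\eqref{it:adjunction}: the unit of the adjunction $((v')_\HB^*, \Rder(v')^\HB_*)$ together with the equality $v \circ f' = f \circ v'$ produces a natural morphism $v^*_\HB \circ \Rder f^\HB_* \FC \to \Rder(f')^\HB_* \circ (v')_\HB^* \FC$ (noting that $v$ and $v'$ are flat, so their pullbacks coincide with their derived pullbacks). Applying the forgetful functor turns this into the classical flat base change morphism, which is known to be an isomorphism.

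The main obstacle will be part~\eqref{it:adjunction}: namely, verifying carefully that the bounded flat and injective resolutions in the equivariant world genuinely compute the derived bifunctor, and that the abelian adjunction at the level of resolutions descends to a natural isomorphism in the derived category. The vanishing statements in Lemmas~\ref{lem:Hom-injective}, \ref{lem:pushforward-injective}, and especially~\ref{lem:Ext-flat-injective} are the technical heart of this reduction; once part~\eqref{it:adjunction} is in place with its compatibility to the non-equivariant adjunction, parts~\eqref{it:projection} and~\eqref{it:flat-base-change} follow essentially formally by the forgetful-functor reduction to the classical statements.
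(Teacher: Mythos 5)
Your proposal is correct and takes essentially the same route as the paper: choose bounded-above flat and bounded-below injective equivariant resolutions, invoke Lemma~\ref{lem:Ext-flat-injective} to see the resulting Hom complex genuinely computes the derived Hom on the $Y$-side, and then deduce parts~\eqref{it:projection} and~\eqref{it:flat-base-change} by constructing the natural morphisms via adjunction and checking they are isomorphisms after applying the forgetful functor (using Propositions~\ref{prop:direct-image} and~\ref{prop:inverse-image}). The paper states exactly this, just more tersely.
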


\begin{proof}
In~\eqref{it:adjunction} we can assume that $\FC$ is a bounded above complex of objects which are flat over $\OC_Y$, and that $\GC$ is a bounded below complex of injective objects. Then the claims follow from the adjunction $(f_\HB^*, f_*^\HB)$ and Lemma~\ref{lem:Ext-flat-injective}. In~\eqref{it:projection} the morphism is constructed using adjunction, and to prove that it is an isomorphism it suffices to check this property for its image under the forgetful functor $D \QCoh^\HB(Y) \to D\QCoh(Y)$, which follows from the usual projection formula (see~\cite[Proposition~3.9.4]{lipman}). The proof of~\eqref{it:flat-base-change} is similar.
\end{proof}

Propositions~\ref{prop:direct-image}, \ref{prop:inverse-image} and~\ref{prop:adj-proj} provide what is necessary to have a theory of Fourier--Mukai transforms in the equivariant setting.

\end{document}